\theoremstyle{definition}
\newtheorem{thm}{Theorem}[section]
\newtheorem{alg}[thm]{Algorithm}
\newtheorem{prop}[thm]{Proposition}
\newtheorem{example}[thm]{Example}
\newtheorem{conj}[thm]{Conjecture}
\newtheorem{lemma}[thm]{Lemma}
\newtheorem{defn}[thm]{Definition}
\newtheorem{cor}[thm]{Corollary}
\newtheorem{problem}[thm]{Problem}
\newtheorem{rmk}[thm]{Remark}
\DeclareMathOperator{\conv}{conv}
\DeclareMathOperator{\face}{face}
\DeclareMathOperator{\GL}{GL}
\DeclareMathOperator{\id}{id}
\DeclareMathOperator{\init}{in}
\DeclareMathOperator{\Inv}{Inv}
\DeclareMathOperator{\st}{st}
\DeclareMathOperator{\stell}{stell}
\DeclareMathOperator{\Newt}{Newt}
\DeclareMathOperator{\supp}{supp}
\newcommand{\calC}{\mathcal{C}}
\newcommand{\calD}{\mathcal{D}}
\newcommand{\calF}{\mathcal{F}}
\newcommand{\calG}{\mathcal{G}}
\newcommand{\calI}{\mathcal{I}}
\newcommand{\calJ}{\mathcal{J}}
\newcommand{\calL}{\mathcal{L}}
\newcommand{\calN}{\mathcal{N}}
\newcommand{\calX}{\mathcal{X}}
\newcommand{\calY}{\mathcal{Y}}
\newcommand{\calZ}{\mathcal{Z}}
\newcommand{\CC}{\mathbb{C}}
\newcommand{\iso}{\cong}
\newcommand{\NN}{\mathbb{N}}
\newcommand{\QQ}{\mathbb{Q}}
\newcommand{\RR}{\mathbb{R}}
\newcommand{\ZZ}{\mathbb{Z}}
\newcommand{\symm}{\mathfrak{S}}
\begin{document}

\title{State Polytopes Related to Two Classes of Combinatorial Neural Codes}
\author{Robert Davis}
\address{Department of Mathematics\\
         Harvey Mudd College\\
         320 E. Foothill Blvd. \\
Claremont, CA  91711}
\email{rdavis@hmc.edu}

\maketitle
	
\begin{abstract}
	Combinatorial neural codes are $0/1$ vectors that are used to model the co-firing patterns of a set of place cells in the brain.
	One wide-open problem in this area is to determine when a given code can be algorithmically drawn on the plane as a Venn diagram-like figure.
	A sufficient condition to do so is for the code to have a property called $k$-inductively pierced.
	Gross, Obatake, and Youngs recently used toric algebra to show that a code on three neurons is $1$-inductively pierced if and only if the toric ideal is trivial or generated by quadratics.
	No result is known for additional neurons in the same generality, part of the difficulty coming from the large number of codewords possible when additional neurons are used.
	
	In this article, we study two infinite classes of combinatorial neural codes in detail.
	For each code, we explicitly compute its universal Gr\"obner basis. 
	This is done for the first class by recognizing that the codewords form a Lawrence-type matrix.
	With the second class, this is done by showing that the matrix is totally unimodular.
	These computations allow one to compute the state polytopes of the corresponding toric ideals, from which all distinct initial ideals may be computed efficiently.
	Moreover, we show that the state polytopes are combinatorially equivalent to well-known polytopes: the permutohedron and the stellohedron. 
\end{abstract}


\section{Introduction}

In the $1970$s, O'Keefe et al. \cite{OKeefePlaceCells} observed that certain neurons in the brain, called \emph{place cells}, spike in their firing rates when the animal is in a particular physical location within its arena.
Figure~\ref{fig: neural activity} shows a sample set of data of place cell activity in a rat, provided in \cite[Supplementary material]{PfeifferFoster}.
Each square corresponds to the firing activity of a single place cell of a rat within a $2$ meter by $2$ meter arena.
Dark areas indicate a low firing rate while orange and yellow areas indicate a high firing rate.
The number in the upper-left corner indicates the maximum firing rate of the place cell being mapped, in Hz.

\begin{figure}
\begin{center}
\includegraphics[scale=0.4]{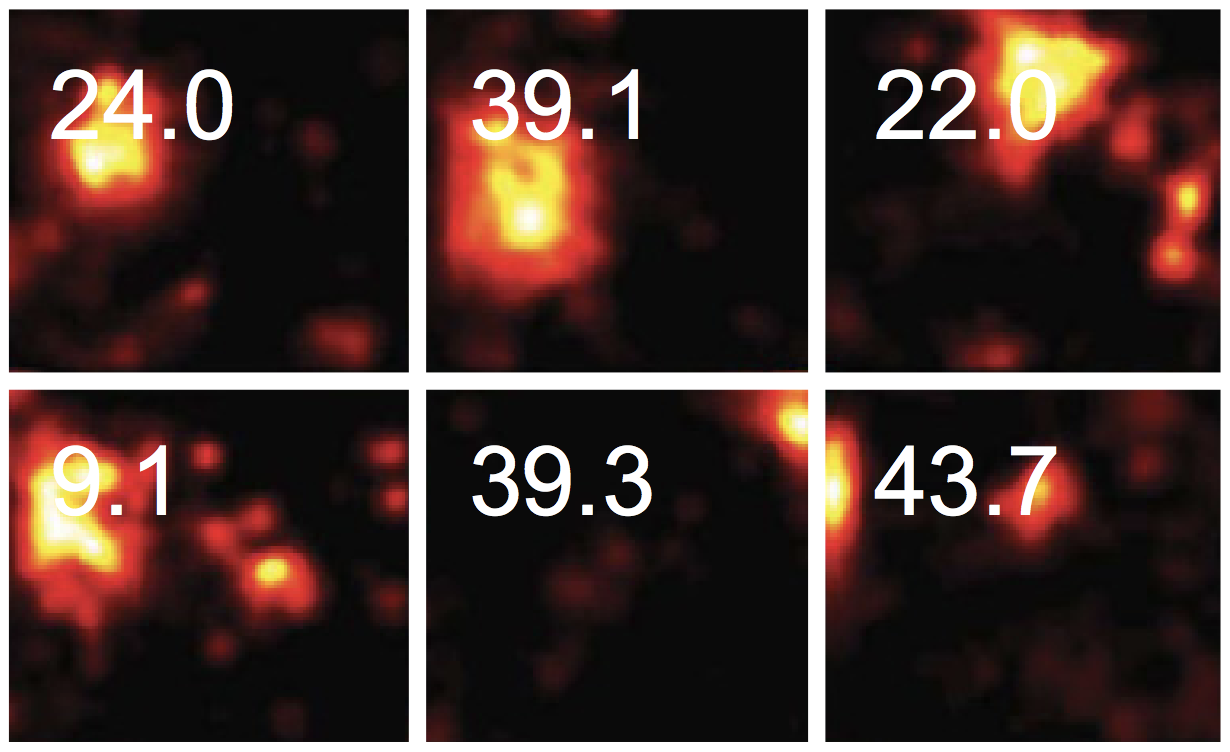}
\end{center}
\caption{Sample field data for a rat in a $2$-meter by $2$-meter arena.}\label{fig: neural activity}
\end{figure}

If a place cell is thought of as either ``active'' or ``silent,'' then one may simplify the images obtained in this way by sketching a disjoint union of simple closed curves to represent the locations in which a neuron is active.
These diagrams are called \emph{Euler diagrams}.
By labeling the curves corresponding to the $i^{th}$ neuron with $\lambda_i$, the set of co-firing patterns of the neurons can be discretized into $0/1$ vectors, where coordinate $i$ is $1$ if the animal is inside a region bounded by $\lambda_i$.
These vectors are called \emph{codewords}, and the set of all codewords for the diagram is called a \emph{combinatorial neural code on $n$ neurons}.
The regions determined by the $\lambda_i$ are often assumed to be convex, as, otherwise, it is possible to have a single codeword correspond to a region with multiple connected components.
We follow this convention throughout the paper.

Let $U_i$ denote the strict interior of $\lambda_i$.. 
The nonempty intersections of $U_1,\dots,U_n$ and their complements $U_1^c,\dots,U_n^c$ are called \emph{zones}, which can be encoded by subsets of the curves; namely, the set $Z \subseteq   \{\lambda_1,\dots,\lambda_n\}$ is a zone if $(\cap_{\lambda_i \in Z} U_i) \cap (\cap_{\lambda_j \notin Z} U_j^c)$ is nonempty. 
For simplicity, these zones can be encoded using the codeword of length $n$ where position $i$ is $0$ if the zone is contained in $U_i^c$ and position $i$ is $1$ otherwise.
In a mild abuse of notation, we will occasionally identify a codeword $c_1c_2\dots c_n$ with the vector $(c_1,c_2,\dots,c_n)$ when no confusion will arise. 

\begin{example}
	Consider the Euler diagram in Figure~\ref{fig: Euler diagram}.
	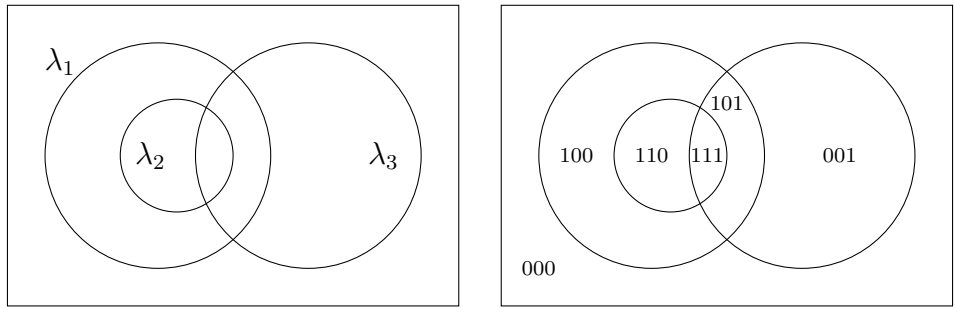
\begin{figure}
		\begin{center}
		\begin{tikzpicture}
			\draw (-3,0.5) -- (3,0.5) -- (3,4.5) -- (-3,4.5) -- cycle;
			\draw (-1,2.5) circle [radius=1.5];
			\draw (-0.75,2.5) circle [radius=.75];
			\draw (1,2.5) circle [radius=1.5];
				
			\node at (-2.3,3.75) {$\lambda_1$};
			\node at (-1.1,2.5) {$\lambda_2$};
			\node at (2,2.5) {$\lambda_3$};
		\end{tikzpicture} \quad
		\begin{tikzpicture}
		\draw (-3,0.5) -- (3,0.5) -- (3,4.5) -- (-3,4.5) -- cycle;
		\draw (-1,2.5) circle [radius=1.5];
		\draw (-0.75,2.5) circle [radius=.75];
		\draw (1,2.5) circle [radius=1.5];
		
		\node at (-2.5,1) {\tiny{$000$}};
		\node at (1.5,2.5) {\tiny{$001$}};
		\node at (-1,2.5) {\tiny{$110$}};
		\node at (-2,2.5) {\tiny{$100$}};
		\node at (-0.26,2.5) {\tiny{$111$}};
		\node at (0,3.2) {\tiny{$101$}};
	\end{tikzpicture}
		\end{center}
		\caption{An Euler diagram for a combinatorial neural code on $3$ neurons. On the left are the labels of the curves; on the right are the resulting codewords written inside their zones.}\label{fig: Euler diagram}
	\end{figure}
	There are three curves, $\lambda_1$, $\lambda_2$, and $\lambda_3$, and their interiors are $U_1$, $U_2$, and $U_3$, respectively.
	Since zones must be nonempty, are five zones: $U_1 \cap U_2^c \cap U_3^c$, whose corresponding codeword is $100$; $U_1 \cap U_2 \cap U_3^c$, whose codeword is $110$; and so forth.
	Including $000$, all six codewords are written inside of the regions to which they correspond.
\end{example}

One particularly difficult problem is to recreate the Euler diagram for a given combinatorial neural code.
If the code is \emph{$k$-inductively pierced} (see Definition~\ref{defn: inductively pierced}), then there exists an efficient algorithm for reconstructing the diagram \cite{TheoryPiercings}.
Recent progress has been made to detect this property through algebraic means \cite{NeuralRing,GrossObatakeYoungs,Hoch}.
For example, Gross, Obatake, and Youngs recently used toric algebra to show that a (well-formed) code on three neurons is $1$-inductively pierced if and only if its toric ideal is trivial or generated by quadratics \cite[Proposition 4.8]{GrossObatakeYoungs}.
No result is known for additional neurons in the same generality, with part of the difficulty coming from the large number of codewords possible when additional neurons are used.

In this article, we study two infinite classes of combinatorial neural codes in detail.
First, in Section~\ref{sec: background}, we introduce the two classes of codes under consideration, and prove necessary properties about their Euler diagrams.
In Section~\ref{sec: toric alg}, we construct toric ideals from our codes and explicitly compute their universal Gr\"obner basis. 
This is done for the first class by recognizing that the codewords form a Lawrence-type matrix.
With the second class, this is done by by showing that the matrix is totally unimodular.
Section~\ref{sec: state pols} follows by discussing a connection between initial ideals of toric ideals and polytopes.
The construction of universal Gr\"obner bases allows us to compute the state polytopes of the corresponding toric ideals, from which all distinct initial ideals may be computed efficiently.
In particular, we show that the state polytopes are combinatorially equivalent to well-known polytopes: the permutohedron and the stellohedron.
The article concludes with an open problem, a conjecture, and experimental data for other classes of combinatorial neural codes.


\section{Combinatorial Neural Codes and their Euler Diagrams}\label{sec: background}

There are two specific classes of codes that we will be analyzing throughout the paper.
The first class is the following.

\begin{defn}
	Let $n \in \ZZ_{>0}$ and let $e_i \in \RR^{n+1}$ denote the $i^{th}$ standard basis vector.
	The \emph{homogeneous star code on $n+1$ neurons} is $S_n = \{0\dots0, s_1,\dots,s_{2n}\}$ where
	\[
		s_i = \begin{cases}
				e_1 + e_{i+1} + e_{n+1} & \text{ if } 1 \leq i < n \\
				e_1 + e_{n+1} & \text{ if } i = n\\
				e_{i+1-n} + e_{n+1} & \text{ if } n < i < 2n \\
				e_{n+1} & \text{ if } i = 2n
			\end{cases}
	\]
\end{defn}

This code uses the convention that curves $\lambda_1,\dots,\lambda_n$ are always contained entirely inside of $\lambda_{n+1}$.
We use $n$ to indicate the total number of curves inside of $\lambda_{n+1}$. 

\begin{rmk}
	The reason for calling this code a \emph{star code} is because of the Euler diagram from which this code comes.
	Zones in the diagram can be represented by replacing curves $\lambda_2$ through $\lambda_n$ with vertices $v_2$ through $v_n$, and constructing edges $\{v_2,v_i\}$ for each $i = 3,\dots,n$. 
	The resulting graph is the complete bipartite graph $K_{1,n-2}$, sometimes called a \emph{star}. 
	See the left diagram in Figure~\ref{fig: diagrams} for an Euler diagram corresponding to $S_5$, along with its labeled curves and zones. 
\end{rmk}

To define our second main class of codes, first let 
\[
	A = \begin{bmatrix}
		1 & 1 & 0 \\
		0 & 1 & 1 
	\end{bmatrix}.
\]
Further, for any matrix $M \in \RR^{n \times k}$, let $\alpha(M)$ denote the $(n+1) \times k$ matrix formed by appending a single row consisting entirely of $1$s to the bottom of $M$.
Let $\oplus$ denote the direct sum operation for matrices.

\begin{defn}
	For $n \geq 1$, let $2_n = (2,\dots,2) \in \ZZ^n$.
	We let $P(2_n)$ denote the code consisting of $0\dots0$ (length $2n+1$) and the codewords formed from the $2n+1$ entries in the columns of the $2n+1 \times 3n+1$ matrix $M_n$, which is defined as follows:
	the first $3n$ columns of $M_n$ form the matrix $\alpha(A \oplus \cdots \oplus A)$ and the last column of $M_n$ is $e_{2n+1}$.
\end{defn}

An Euler diagram for $P(2_n)$ consists of a single curve $\lambda_{2n+1}$ enclosing the disjoint union of $n$ pairs of generically-intersecting circles.
See the right diagram in Figure~\ref{fig: diagrams} for an Euler diagram corresponding to $P(2_3)$. 

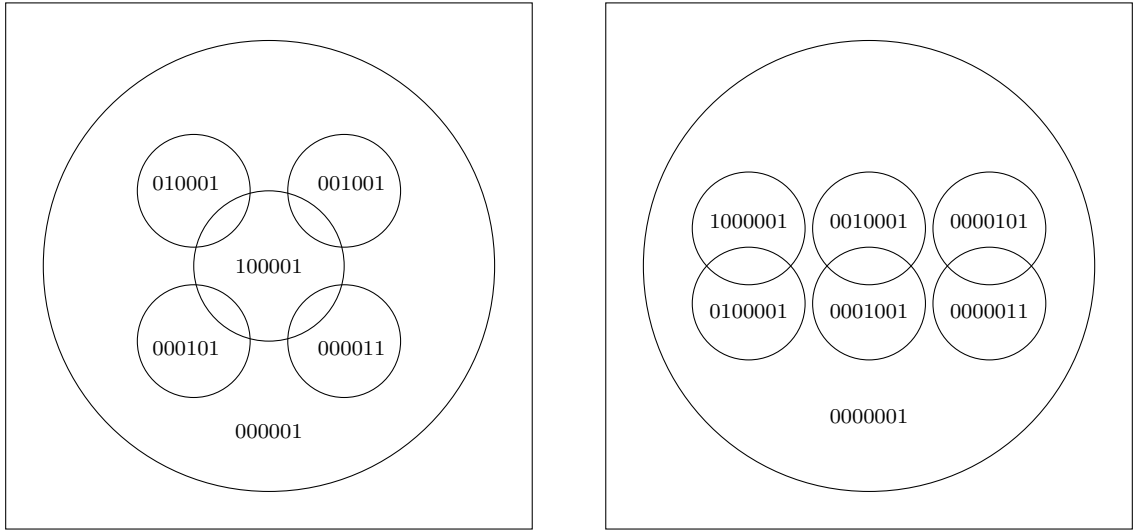
\begin{figure}
	\begin{center}
		\begin{tikzpicture}
			\draw (-3.5,-3.5) -- (3.5,-3.5) -- (3.5,3.5) -- (-3.5,3.5) -- cycle;
			\draw (0,0) circle [radius=3];
			\draw (0,0) circle [radius=1];
			\draw (-1,-1) circle [radius=0.75];
			\draw (1,1) circle [radius=0.75];
			\draw (1,-1) circle [radius=0.75];
			\draw (-1,1) circle [radius=0.75];
			
			\node at (0,0) {\tiny{$100001$}};
			\node at (-1.1,1.1) {\tiny{$010001$}};
			\node at (1.1,1.1) {\tiny{$001001$}};
			\node at (-1.1,-1.1) {\tiny{$000101$}};
			\node at (1.1,-1.1) {\tiny{$000011$}};
			\node at (0,-2.2) {\tiny{$000001$}};
		\end{tikzpicture} \qquad
		\begin{tikzpicture}
			\draw (-3.5,-3.5) -- (3.5,-3.5) -- (3.5,3.5) -- (-3.5,3.5) -- cycle;
			\draw (0,0) circle [radius=3];
			\draw (-1.6,-0.5) circle [radius=0.75];
			\draw (-1.6,0.5) circle [radius=0.75];
			\draw (0,-0.5) circle [radius=0.75];
			\draw (0,0.5) circle [radius=0.75];
			\draw (1.6,-0.5) circle [radius=0.75];
			\draw (1.6,0.5) circle [radius=0.75];
			
			\node at (-1.6,0.6) {\tiny{$1000001$}};
			\node at (-1.6,-0.6) {\tiny{$0100001$}};
			\node at (0,0.6) {\tiny{$0010001$}};
			\node at (0,-0.6) {\tiny{$0001001$}};
			\node at (1.6,0.6) {\tiny{$0000101$}};
			\node at (1.6,-0.6) {\tiny{$0000011$}};
			\node at (0,-2) {\tiny{$0000001$}};
		\end{tikzpicture}
		\end{center}
	\caption{On the left: an Euler diagram corresponding to $S_5$. On the right: an Euler diagram corresponding to $P(2_3)$. Each diagram has several codewords listed in the regions to which they correspond.}\label{fig: diagrams}
\end{figure}

\begin{rmk}
	Similarly to how we related the Euler diagrams for $S_n$ to star graphs, we can also relate $P(2_n)$ to a disjoint union of edges.
	This observation suggests a generalization of $P(2_n)$ to any $\ell  = (l_1,\dots,l_n) \in \ZZ_{\geq 0}^n$, where $P(\ell)$ now corresponds to a disjoint union of paths.
	The value $l_i$ then corresponds to the length of the $i^{th}$ component path of the graph. 
	We say more about $P(\ell)$ in Section~\ref{sec: experimental data}.
\end{rmk}

One assumption that we have been making is that the codes we are studying have Euler diagrams that are ``sufficiently generic.''
More precisely, we call an Euler diagram \emph{well-formed} if
\begin{enumerate}
	\item each curve label is used exactly once, 
	\item all curves intersect in only finitely many points,
	\item each point in the plane is passed through at most 2 times by the curves in the diagram, and
	\item each zone is connected.
\end{enumerate}

While making sketches of Euler diagrams is great for visualizing, in order to work with them more algebraically, we need to describe them in a more formal way.
An \emph{abstract description} of an Euler diagram $d$ is an ordered pair $\calD = (\calL, \calZ)$ where $\calL$ contains the labels of the curves and $\calZ \subseteq 2^{\calL}$ contains the zones of the diagram.
Since we always assume that $0\dots0$ is in a neural code, we assume that $\emptyset \in \calZ$.
An Euler diagram with abstract description $\calD$ is called a \emph{realization} or \emph{drawing} of $\calD$.
If $\calD$ is an abstract description that has a well-formed realization, then we also call $\calD$ \emph{well-formed}.

For an abstract description $\calD = (\calL,\calZ)$ and $\lambda \in \calL$, let $\calX_\lambda = \{Z \in \calZ \mid \lambda \in Z\}$.
In words, $\calX_i$ is the set of all zones containing $\lambda$. 
Further, given a zone $Z \in \calZ$ and $\Lambda \subseteq \calL$ such that $Z \cap \Lambda = \emptyset$, the \emph{$\Lambda$-cluster of $Z$}, denoted $\calY_{Z, \Lambda}$ is
\[
	\calY_{Z,\Lambda} = \{Z \cup \Lambda_i \mid \Lambda_i \subseteq \Lambda\}.
\]
Notice that $\calY_{Z,\Lambda}$ is not always a set of zones in $\calD$, so something interesting is happening when $\calY_{Z,\Lambda} \subseteq \calZ$.
	
\begin{defn}
	Let $\calD = (\calL,\calZ)$ be an abstract description, and let $\Lambda =\{\lambda_1,\dots,\lambda_k\}  \subseteq \calL$ be distinct labels.
	Say that $\lambda_{k+1} \in \calL$ is a \emph{$k$-piercing of $\Lambda$ in $\calD$} if there exists a zone $Z\in \calZ$ for which
	\begin{enumerate}
		\item $\lambda_i \notin Z$ for each $i \leq k+1$,
		\item $\calX_{\lambda_{k+1}} = \calY_{Z \cup \{\lambda_{k+1}\}, \Lambda}$, and
		\item $\calY_{Z,\Lambda} \subseteq \calZ$.
	\end{enumerate}
	When this happens, we say that $\lambda_{k+1}$ is a $k$-piercing \emph{identified by the background zone $Z$}.
\end{defn}

Given $\calD = (\calL, \calZ)$ and $\lambda \in \calL$, define the \emph{removal of $\lambda$ from $\calD$} to be
\[
	\calD \setminus \lambda = (\calL \setminus \{\lambda\}, \{Z \setminus \{\lambda\} \mid Z \in \calZ\}\}).
\]
This is the abstraction of deleting a neuron from a neural code $\calC$:
\[
	\calC \setminus \lambda = \{(c_1,\dots,c_{\lambda -1}, c_{\lambda+1},\dots,c_n) \mid (c_1,\dots,c_n) \in \calC\}.
\]
We now give another main definition regarding combinatorial neural codes.

\begin{defn}\label{defn: inductively pierced}
	The abstract description $\calD = (\calL,\calZ)$ is \emph{$k$-inductively pierced} if $\calD$ has a $j$-piercing $\lambda$ for some 
	$j = 0,\dots,k$ such that $\calD \setminus \lambda$ is $k$-inductively pierced.
	To establish a base for verification, declare the abstract description $(\emptyset,\{\emptyset\})$ to be $k$-inductively pierced for all $k$.
	Say a code $\calC$ is \emph{$k$-inductively pierced} if it has a well-formed abstract description that is $k$-inductively pierced.
\end{defn}

Note that if $\calD$ is $k$-inductively pierced, then it is automatically $l$-inductively pierced for all $l \geq k$. 
The codes $S_n$ and $P(2_n)$ have canonical abstract descriptions, where the curve labels and zones are given in their definitions.
In this instance, we will use $\calL$ and $\calZ$ to denote their respective sets of curve labels and zones.

It is generally difficult to identify when a code is $k$-inductively pierced for large $k$, but when $k=0$, there is a simple description.

\begin{prop}[{\cite[Proposition~2.7]{GrossObatakeYoungs}}]\label{prop: 0 piercings}
	A well-formed abstract description $\calD$ is inductively $0$-pierced if and only if no two curves in any well-formed realization of $\calD$ intersect.
\end{prop}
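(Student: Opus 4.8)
The plan is to prove both directions by induction on the number of curve labels $|\calL|$, after unwinding what the definitions say when $k=0$. With $k=0$ the set $\Lambda$ in the definition of a piercing is empty, so a label $\lambda$ is a $0$-piercing of $\calD=(\calL,\calZ)$ exactly when there is a zone $Z\in\calZ$ with $\lambda\notin Z$ and $\calX_\lambda=\calY_{Z\cup\{\lambda\},\emptyset}=\{Z\cup\{\lambda\}\}$; equivalently, $\lambda$ lies in exactly one zone $W\in\calZ$ and $W\setminus\{\lambda\}$ is also a zone. With this reformulation the base case $\calL=\emptyset$, i.e.\ $\calD=(\emptyset,\{\emptyset\})$, is immediate: it is inductively $0$-pierced by convention, and it has a (curve-free) well-formed realization in which no two curves meet, vacuously.

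For the forward implication I would take an inductively $0$-pierced $\calD$, choose a $0$-piercing $\lambda$ with $\calD\setminus\lambda$ inductively $0$-pierced, and let $d$ be an arbitrary well-formed realization of $\calD$. The crucial geometric point is that in $d$ the curve $\lambda$ cannot meet any other curve $\mu$: by convexity and the well-formedness conditions any such intersection would be a finite set of transversal crossings, and near a crossing the part of the interior of $\lambda$ lying inside $\mu$ and the part lying outside $\mu$ are two different zones, both in $\calX_\lambda$, contradicting $|\calX_\lambda|=1$. Hence the disk bounded by $\lambda$ lies inside a single zone of the remaining curves, so erasing $\lambda$ from $d$ gives a well-formed realization of $\calD\setminus\lambda$, and one checks that its zone set is exactly $\{Z\setminus\{\lambda\}\mid Z\in\calZ\}$. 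By the inductive hypothesis no two curves cross there, and since $\lambda$ crosses nothing, no two curves cross in $d$.

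For the converse I would assume no two curves cross in any well-formed realization and fix one such realization $d$, which exists because $\calD$ is well-formed. Pairwise non-crossing convex curves are nested, so $d$ has an innermost curve $\lambda$: no other curve enters or lies inside the (connected) disk $U_\lambda$ bounded by $\lambda$. Then all of $U_\lambda$ is a single zone $Z\cup\{\lambda\}$, and the nonempty region lying in the same ambient zone but outside $\lambda$ shows $Z\in\calZ$, so by the reformulation $\lambda$ is a $0$-piercing of $\calD$. To run the induction I then need $\calD\setminus\lambda$ to inherit the no-crossing property: given any well-formed realization $d'$ of $\calD\setminus\lambda$, I would insert a tiny circle labeled $\lambda$ into the region of $d'$ realizing $Z$; verifying the four well-formedness conditions shows this produces a well-formed realization of $\calD$ with the same crossings as $d'$ among the old curves, so the hypothesis forces $d'$ to be crossing-free. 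Then $\calD\setminus\lambda$ is inductively $0$-pierced by induction, and hence so is $\calD$.

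I expect the main obstacle to be the geometric bookkeeping in the two inductive steps: checking that deleting the non-crossing curve $\lambda$ sends $\calZ$ exactly onto the zone set of $\calD\setminus\lambda$ (and that inserting a small circle does the reverse), and that each operation preserves all four well-formedness conditions --- in particular the connectivity of zones. The other place where care is needed is the assertion that a curve meeting another curve produces two distinct zones containing it; this is precisely where convexity and the well-formedness hypotheses on the realization (finitely many intersection points, each on at most two curves) are used, and a degenerate, tangential intersection would have to be ruled out there.
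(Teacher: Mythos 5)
The paper does not prove this proposition; it is quoted from~\cite[Proposition 2.7]{GrossObatakeYoungs}, so there is no in-paper argument to compare your sketch against, and I can only assess it on its own merits. Your unwinding of the $k=0$ case of the piercing definition is correct, and the two-directional induction on $|\calL|$ (erase a $0$-piercing in one direction, locate an innermost curve in the other) is the natural strategy.

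Two points need repair. In the forward direction you argue that $\lambda$ crosses nothing, then write ``Hence the disk bounded by $\lambda$ lies inside a single zone of the remaining curves.'' That inference does not hold on its own: a curve $\mu$ lying entirely inside $U_\lambda$ crosses $\lambda$ nowhere, yet still splits $U_\lambda$ into several zones. Instead, read $|\calX_\lambda| = 1$ as saying directly that $U_\lambda$ is a single zone; this rules out transversal crossings and nested curves inside $\lambda$ at once, and ``$\lambda$ crosses nothing'' falls out as a byproduct. In the converse, ``pairwise non-crossing convex curves are nested'' is false (two mutually exterior circles), though the weaker fact you actually use --- that there is a curve whose interior contains no other curve --- does hold.

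The remaining work, which you correctly flag as the main obstacle, is the bookkeeping: that erasing the non-crossing curve $\lambda$ sends a well-formed realization of $\calD$ to a well-formed realization of $\calD \setminus \lambda$ with zone set $\{Z \setminus \{\lambda\} \mid Z \in \calZ\}$, and conversely that inserting a small circle for $\lambda$ into the region realizing $Z$ in a realization of $\calD \setminus \lambda$ yields a well-formed realization of $\calD$. The insertion step also needs the planar-topology fact that removing a sufficiently small closed disk from a connected open region leaves it connected. These verifications are nontrivial but routine, and they are where a complete proof would spend most of its effort; as written, your argument identifies the right skeleton but leaves that flesh to~\cite{GrossObatakeYoungs}.
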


We are now able to prove our first result.

\begin{prop}\label{prop: S_n ind pierced}
	The codes $S_n$ and $P(2_n)$ are codes for $1$-inductively pierced for all $n$.
\end{prop}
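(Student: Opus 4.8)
The plan is, for each of the two families, to check that the canonical abstract description is well-formed and then to prove it is $1$-inductively pierced by induction on $n$, peeling off one curve of the diagram at a time. Well-formedness is clear from Figure~\ref{fig: diagrams} and its evident generalization: for $S_n$ take an outer circle $\lambda_{n+1}$, a circle $\lambda_1$ in its interior, and $n-1$ small congruent circles $\lambda_2,\dots,\lambda_n$, each crossing $\partial\lambda_1$ transversally in exactly two points and placed so that no two of them meet; for $P(2_n)$ take an outer circle $\lambda_{2n+1}$ containing $n$ pairwise disjoint lens-shaped regions, each formed by a pair of transversally intersecting circles. In either realization each label is used once, the curves meet in finitely many points, no point lies on three curves, and every zone is connected, so all four conditions hold. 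It therefore suffices to show the canonical descriptions $\calD=(\calL,\calZ)$ are $1$-inductively pierced as abstract descriptions.

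For $S_n$, reading off the codewords $s_i$ gives
\[
  \calZ = \{\emptyset,\ \{\lambda_{n+1}\},\ \{\lambda_1,\lambda_{n+1}\}\}\ \cup\ \{\{\lambda_1,\lambda_j,\lambda_{n+1}\},\ \{\lambda_j,\lambda_{n+1}\} : 2\le j\le n\}.
\]
I would induct on $n$. When $n=1$, $\calZ=\{\emptyset,\{\lambda_1,\lambda_2\},\{\lambda_2\}\}$; here $\lambda_1$ is a $0$-piercing of $\emptyset$ identified by the background zone $\{\lambda_2\}$, and $\calD\setminus\lambda_1=(\{\lambda_2\},\{\emptyset,\{\lambda_2\}\})$, in which $\lambda_2$ is a $0$-piercing identified by $\emptyset$, and deleting it reaches $(\emptyset,\{\emptyset\})$. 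For $n\ge2$, I claim $\lambda_n$ is a $1$-piercing of $\Lambda=\{\lambda_1\}$ identified by the background zone $Z=\{\lambda_{n+1}\}$: indeed $\lambda_1,\lambda_n\notin Z$; one has $\calX_{\lambda_n}=\{\{\lambda_1,\lambda_n,\lambda_{n+1}\},\{\lambda_n,\lambda_{n+1}\}\}=\calY_{Z\cup\{\lambda_n\},\Lambda}$; and $\calY_{Z,\Lambda}=\{\{\lambda_{n+1}\},\{\lambda_1,\lambda_{n+1}\}\}\subseteq\calZ$. Deleting $\lambda_n$ merges $\{\lambda_1,\lambda_n,\lambda_{n+1}\}$ into $\{\lambda_1,\lambda_{n+1}\}$ and $\{\lambda_n,\lambda_{n+1}\}$ into $\{\lambda_{n+1}\}$, both already present, so $\calD\setminus\lambda_n$ is, after relabeling $\lambda_{n+1}$ as $\lambda_n$, precisely the canonical description of $S_{n-1}$, which is $1$-inductively pierced by induction.

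For $P(2_n)$ the columns of $M_n$ give
\[
  \calZ = \{\emptyset,\ \{\lambda_{2n+1}\}\}\ \cup\ \bigcup_{k=1}^{n}\bigl\{\{\lambda_{2k-1},\lambda_{2n+1}\},\ \{\lambda_{2k-1},\lambda_{2k},\lambda_{2n+1}\},\ \{\lambda_{2k},\lambda_{2n+1}\}\bigr\}.
\]
Again induct on $n$. For $n=1$, $\calZ=\{\emptyset,\{\lambda_3\},\{\lambda_1,\lambda_3\},\{\lambda_1,\lambda_2,\lambda_3\},\{\lambda_2,\lambda_3\}\}$: here $\lambda_2$ is a $1$-piercing of $\{\lambda_1\}$ identified by $\{\lambda_3\}$, then in $\calD\setminus\lambda_2$ the curve $\lambda_1$ is a $0$-piercing identified by $\{\lambda_3\}$, then $\lambda_3$ is a $0$-piercing identified by $\emptyset$, reaching $(\emptyset,\{\emptyset\})$. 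For $n\ge2$, the same three checks show $\lambda_{2n}$ is a $1$-piercing of $\{\lambda_{2n-1}\}$ identified by $Z=\{\lambda_{2n+1}\}$. After deleting $\lambda_{2n}$, the only zone containing $\lambda_{2n-1}$ is $\{\lambda_{2n-1},\lambda_{2n+1}\}$, so $\lambda_{2n-1}$ is a $0$-piercing of $\emptyset$ identified by $\{\lambda_{2n+1}\}$; deleting it and relabeling $\lambda_{2n+1}$ as $\lambda_{2n-1}$ yields the canonical description of $P(2_{n-1})$, closing the induction.

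The only mildly delicate point is this last one: removing the $1$-piercing $\lambda_{2n}$ does not immediately return the previous member of the family, because the partner curve $\lambda_{2n-1}$ is left behind as a lone circle inside $\lambda_{2n+1}$ and must be stripped off as a $0$-piercing first; once that is noticed, everything reduces to listing the zones from the given codewords and verifying the three finite set conditions defining a piercing. One could add as a remark that neither family is $0$-inductively pierced, since by Proposition~\ref{prop: 0 piercings} that would force every pair of curves to be disjoint, which already fails for $\lambda_1$ and $\lambda_2$; so the value $1$ in Proposition~\ref{prop: S_n ind pierced} is best possible.
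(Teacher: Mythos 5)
Your proof is correct and follows essentially the same inductive strategy as the paper: strip off $\lambda_n$ as a $1$-piercing of $\{\lambda_1\}$ in $S_n$ (resp.\ $\lambda_{2n}$ as a $1$-piercing of $\{\lambda_{2n-1}\}$, then $\lambda_{2n-1}$ as a $0$-piercing, in $P(2_n)$), each identified by the outermost zone, reducing to the next smaller code and invoking Proposition~\ref{prop: 0 piercings} at the base. You supply more detail than the paper (explicit description of $\calZ$, a well-formedness check, a separate $n=1$ base case for $P(2_n)$), and you correctly use the background zone $\{\lambda_{2n+1}\}$ where the paper's proof writes $\{\lambda_{n+1}\}$, which appears to be a typo.
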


\begin{proof}
	This will follow by induction and a straightforward application of the definition of $1$-inductively pierced.
	We first consider $S_n$, and use $n=1$ as the initial case, so that $S_1 = \{00,11,01\}$.
	This has a well-formed realization as the Euler diagram consisting of one curve completely contained inside another curve.
	By Proposition~\ref{prop: 0 piercings}, $S_1$ is $0$-inductively pierced.
	Thus, $S_1$ is $1$-inductively pierced.
	
	Now suppose $n > 1$, set $\Lambda = \{\lambda_1\}$ and consider curve $\lambda_n$. 
	Then $\lambda_n$ is a $1$-piercing of $S_n$ identified by the background zone $Z = \{\lambda_{n+1}\}$:
	observe that
	\[
		\calX_{\lambda_n} =  \{\{\lambda_1,\lambda_n,\lambda_{n+1}\},\{\lambda_n,\lambda_{n+1}\}\},
	\]
	and
	\[
		\calY_{\{\lambda_n,\lambda_{n+1}\},\{\lambda_1\}} = \{ \{\lambda_1, \lambda_n, \lambda_{n+1}\}, \{\lambda_n,\lambda_{n+1}\}\} = \calX_{\lambda_n}.
	\]
	Moreover, $\lambda_n \notin Z$, and
	\[
		\calY_{\{\lambda_{n+1}\},\{\lambda_1\}} = \{\{\lambda_{n+1}\},\{\lambda_1,\lambda_{n+1}\}\} \subseteq \calZ.
	\]
	Therefore, $\lambda_n$ is a $1$-piercing of $S_n$.
	Since $S_n \setminus \{\lambda_n\} = S_{n-1}$ and $S_{n-1}$ is $1$-inductively pierced, $S_n$ is $1$-inductively pierced, as claimed.

	An inductive argument proves that $P(2_n)$ is $1$-inductively pierced as well.
	In this case, setting $\Lambda = \{\lambda_{2n-1}\}$, the reader can verify that $\lambda_{2n}$ is a $1$-piercing identified by background zone $\{\lambda_{n+1}\}$.
	Then, $\lambda_{2n-1}$ is a $0$-piercing of $P(2_n) \setminus \{\lambda_{2n}\}$ identified by the same background zone.
	Finally, $(P(2_n) \setminus \{\lambda_{2n}\}) \setminus \{\lambda_{2n-1}\} = P(2_{n-1})$, which is $1$-inductively pierced.
	Therefore, $P(2_n)$ is $1$-inductively pierced.
\end{proof}

Although Proposition~\ref{prop: S_n ind pierced} is only a small result, it has broader algebraic implications, as we will see in the next section.


\section{Toric Ideals and the Gr\"obner Fan}\label{sec: toric alg}

An essential component to our work will be to relate $S_n$ and $P(2_n)$ to polynomials.
In this section we give the necessary algebraic background, and will draw the connections in Section~\ref{sec: state pols}.
For more information about the topics presented in this section, see, for example, \cite{sturmfels}.


\subsection{Monomial Orders on Polynomials}

Given a semigroup $A \subseteq \ZZ^n$, the \emph{semigroup algebra generated by $A$} is 
\[
	\CC[A] := \CC[x^a \mid a \in A] \subseteq \CC[x_1^{\pm 1},\dots,x_n^{\pm 1}],
\]
where, if $a = (a_1,\dots,a_n)$, then $x^a := x_1^{a_1}x_2^{a_2}\cdots x_n^{a_n}$.
In order to get more control over semigroup algebras, we want to keep track of how their generators produce elements.
We do so in the following way: again suppose $A = \{b_1,\dots, b_m\} \subseteq \ZZ^n$, and $A$ is a set of minimal generators of $\NN A$, the set of all nonnegative integer linear combinations of elements in $A$.
For each $b_i \in A$ let us associate a monomial $x^{b_i} = x_1^{b_{i,1}}\dots x_n^{b_{i,n}} \in \CC[\NN A]$ and a variable $t_i \in \CC[t_1,\dots,t_m]$. 
We therefore have the homomorphism
\[
	\pi_A: \CC[t_1,\dots,t_m] \to \CC[\NN A]
\]
defined by $\pi_A(t_i) = x^{b_i}$ 
By construction $\pi_A$ is surjective, hence, by the First Isomorphism Theorem,
\[
	\CC[t_1,\dots,t_m]/I_A \iso \CC[\NN A].
\]
where $I_A = \ker \pi_A$.
The ideal $I_A$ is called the \emph{toric ideal} of $A$, and contains an astounding amount of information about $A$.

\begin{rmk}
	Toric ideals are also commonly introduced by starting with a matrix $M \in \ZZ^{n \times k}$ and defining $I_M$ to be the toric ideal of the set consisting of the columns of $M$, treated as vectors in $\RR^n$.
	This construction is slightly more general, as a matrix may have repeated columns.
	For this article, however, we will never encounter a matrix with repeated columns.
	Defining toric ideals through matrices will occasionally be helpful, and we will occasionally treat our codes as both matrices and as sets of columns of matrices.
\end{rmk}

One useful result we can state is the following.

\begin{lemma}[{\cite[Lemma 4.14]{sturmfels}}]\label{lem: homog toric}
	Let $A \in \ZZ^{n \times k}$.
	The ideal $I_A$ is homogeneous if and only if there is some vector $w \in \QQ^n$ for which $a \cdot w = 1$ for each column $a$ of $A$.
\end{lemma}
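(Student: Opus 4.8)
The plan is to use the standard binomial description of a toric ideal together with the observation that $I_A$ contains no monomials. Recall from \cite{sturmfels} that if $A$ has columns $b_1,\dots,b_k$, then $I_A$ is generated as an ideal by the binomials $t^{u^+} - t^{u^-}$, where $u$ ranges over the integer kernel $\ker_\ZZ A = \{u \in \ZZ^k \mid Au = 0\}$ and $u = u^+ - u^-$ is the decomposition of $u$ into its positive and negative parts. Grading $\CC[t_1,\dots,t_k]$ in the standard way with $\deg t_i = 1$, such a binomial is homogeneous exactly when $|u^+| = |u^-|$, equivalently when the coordinates of $u$ sum to zero. So the whole proof comes down to relating the condition ``every kernel vector has coordinate sum zero'' to the existence of the claimed $w$.

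For the ``if'' direction, I would suppose $w \in \QQ^n$ satisfies $b_i \cdot w = 1$ for each column $b_i$ of $A$. Then any $u \in \ker_\ZZ A$ satisfies $\sum_i u_i b_i = 0$, and dotting with $w$ gives $\sum_i u_i (b_i \cdot w) = \sum_i u_i = 0$. Hence every generating binomial of $I_A$ is homogeneous, so $I_A$ is a homogeneous ideal.

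For the ``only if'' direction, suppose $I_A$ is homogeneous. First I would record that $I_A$ contains no monomial: since $\pi_A(t_i) = x^{b_i}$ is a unit in $\CC[x_1^{\pm1},\dots,x_n^{\pm1}]$, we have $\pi_A(t^u) = x^{Au} \neq 0$ for every monomial $t^u$, so no monomial lies in $\ker\pi_A = I_A$. Now take any $u \in \ker_\ZZ A$. The binomial $t^{u^+} - t^{u^-}$ lies in $I_A$, and because $I_A$ is homogeneous it must contain each homogeneous component of this element; if $|u^+| \neq |u^-|$ those components are the monomials $t^{u^+}$ and $t^{u^-}$, contradicting the previous paragraph. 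Hence $|u^+| = |u^-|$ for all $u \in \ker_\ZZ A$, i.e. the all-ones vector $\mathbf{1} \in \QQ^k$ is orthogonal to $\ker_\ZZ A$. Since $\ker_\ZZ A$ spans $\ker_\QQ A$ over $\QQ$, it follows that $\mathbf{1}$ lies in $(\ker_\QQ A)^\perp$, which is precisely the row space of $A$. Thus $\mathbf{1} = w^\top A$ for some $w \in \QQ^n$, i.e. $b_i \cdot w = 1$ for every column $b_i$, as desired.

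The argument is essentially routine once the two structural inputs are available, and the only step needing genuine care is the ``only if'' direction: one must combine the fact that a homogeneous ideal contains the homogeneous components of its elements with the fact that $I_A$ — being the kernel of a map into a Laurent polynomial ring — contains no monomials, and it is exactly the interaction of these two facts that upgrades ``$I_A$ homogeneous'' to ``every kernel binomial is balanced.'' The remaining steps are bookkeeping with the $u^\pm$ decomposition and the standard identity identifying $(\ker_\QQ A)^\perp$ with the row space of $A$.
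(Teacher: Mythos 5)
Your proof is correct, and it is essentially the standard argument from Sturmfels' book, which the paper cites without reproducing a proof. You combine the two right ingredients — that $I_A$ is generated by the balanced binomials $t^{u^+}-t^{u^-}$ for $u\in\ker_\ZZ A$, and that $I_A$ contains no monomial since $\pi_A$ maps monomials to (nonzero) Laurent monomials — and the row-space/orthogonal-complement bookkeeping at the end is exactly right.
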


By construction, both $S_n$ and $P(2_n)$ have homogeneous toric ideals: each nonzero codeword $a$ satisfies $a \cdot e_{n+1} = 1$ when $a \in S_n$ and $a \cdot e_{2n+1} = 1$ when $a \in P(2_n)$. 
This explains the use of the word ``homogeneous'' when introducing $S_n$.

Next, if $\prec$ is a total ordering on monomials, then each polynomial $f \in \CC[t_1,\dots,t_m]$ has a unique \emph{initial term}, denoted $\init_{\prec}(f)$, which is greatest among the monomials of $f$.
This further leads to the \emph{initial ideal} of an ideal $I$, defined as
\[
	\init_{\prec}(I) = (\init_{\prec}(f) \mid f \in I).
\]
Although $\init_{\prec}(I)$, as defined, is generated by an infinite number of polynomials, it is still an ideal of $\CC[t_1,\dots,t_m]$, so by Hilbert's Basis Theorem, it has a finite generating set. 

A class of monomial orders that will be important for us is that of weight orders.
Let $w \in \RR^m$ and let $\sigma$ denote some monomial order.
The \emph{weight order} $\prec_{w,\sigma}$ determined by $w$ and $\sigma$ sets $t^a \prec_{w,\sigma} t^b$ if and only if $w\cdot a < w \cdot b$, where $\cdot$ is the ordinary dot product.
More generally, let $W = \{w_1,\dots,w_m\} \subseteq \RR^m$ be linearly independent.
The \emph{weight order} $\prec_W$ determined by $W$ is defined by setting $t^a \prec_W t^b$ if there is some $1 \leq i \leq m$ for which $w_j\cdot a = w_j \cdot b$  for each $j < i$ and $w_i\cdot a < w_i \cdot b$.
It is well known that all monomial orders can be represented by an appropriate choice of weight vectors $W$ \cite[Ch. 2, \S4]{CoxLittleOShea}.

Now, if $I = (g_1,\dots,g_k)$ is an ideal of polynomials, it is not necessarily true that the ideal $(\init_{\prec}(g_1),\dots,\init_{\prec}(g_k))$ equals $\init_{\prec}(I)$.
However, if this does occur, then we call $\{g_1,\dots,g_k\}$ a \emph{Gr\"obner basis of $I$ with respect to $\prec$}. 
A Gr\"obner basis $\calG$ is \emph{reduced} if the leading coefficient (with respect to $\prec$) of every element is $1$ and if, for every $g,g' \in \calG$, $\init_{\prec}(g)$ does not divide any term of $g'$.
While there are many Gr\"obner bases of an ideal, there is a unique reduced Gr\"obner basis with respect to each term order \cite[Ch. 2, \S7, Theorem 5]{CoxLittleOShea}.
A \emph{universal Gr\"obner basis} of $I$ is a Gr\"obner basis for every monomial order.
Such a basis always exists, since one may take the union of all reduced Gr\"obner bases of $I$.
This particular set we call \emph{the universal Gr\"obner basis} of $I$.

For a combinatorial neural code $\calC$, its toric ideal is denoted $I_{\calC}$  and is defined as 
\[
	I_{\calC} = \ker \pi_{\calC \setminus \{0\dots0\}}.
\]
The reason that the all-zeros code is excluded is because, otherwise, the quotient ring $\CC[t_1,\dots,t_m]/\ker \pi_{\calC}$ would always identify $1$ with $t_m$.
So, including $0\dots0$ tells us nothing unique to $\calC$, and is excluded from the start.
We emphasize that it is important to recognize the distinction between $I_{\calC}$ where $\calC$ is a code and $I_A$ where $A$ is a set or matrix: in the former, the zero codeword is excluded when constructing the toric ideal, and in the latter, all vectors in $A$ (or columns of $A$) are included whether or not any of them are the zero vector.

We are now ready to state several results that relate combinatorial neural codes and toric ideals.

\begin{thm}[see \cite{GrossObatakeYoungs}]\label{thm: well-formed properties}
	Let $\calC$ be a well-formed code on $n$ neurons such that each curve is contained in some zone. 
	\begin{enumerate}
		\item The toric ideal $I_{\calC} = (0)$ if and only if $\calC$ is $0$-inductively pierced.
		\item If $\calC$ is $1$-inductively pierced then the toric ideal $I_{\calC}$ is either generated by quadratics or $I_{\calC} = (0)$.
		\item When $n=3$, the code is $1$-inductively pierced if and only if the reduced Gr\"obner basis of $I_{\calC}$ with respect to the weighted grevlex order with the weight vector $w = (0, 0, 0, 1, 1, 1, 0)$ contains only binomials of degree $2$ or less.
	\end{enumerate}
\end{thm}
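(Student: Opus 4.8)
The plan is to attack the three parts by increasingly computational means: (1) reduces to linear independence of the codeword vectors; (2) is an induction on the number of neurons in which one peels off a single piercing at a time; and (3) follows from (2) together with a finite verification over the well-formed codes on three neurons.

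For (1), recall that for a matrix $A$ without repeated columns $I_A = (0)$ precisely when the columns of $A$ are $\QQ$-linearly independent, since $t^u - t^v \in \ker\pi_A$ iff $\sum u_i b_i = \sum v_i b_i$. I would first show that if $\calC$ is $0$-inductively pierced then its nonzero codewords are linearly independent: by Proposition~\ref{prop: 0 piercings} no two curves cross in a well-formed realization, so the curves form a laminar family and each nonzero codeword is the indicator vector of a curve $\lambda_i$ together with all curves containing it; taking $\lambda_i$ minimal under containment, coordinate $i$ is nonzero in only that codeword, and deleting it (equivalently, passing to $\calC \setminus \lambda_i$) and inducting gives the claim. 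Conversely, if $\calC$ is not $0$-inductively pierced then two curves $\lambda_i,\lambda_j$ cross in some well-formed realization; genericity and connectedness of zones force the four ``quadrant'' zones near a crossing point to be nonempty, producing codewords $v_{11},v_{10},v_{01},v_{00}$ that agree off coordinates $i,j$ and satisfy $v_{11} + v_{00} = v_{10} + v_{01}$. If $v_{00} = 0\cdots0$ this reads $v_{11} = v_{10} + v_{01}$; either way it is a nonzero binomial in $I_\calC$. The point needing care is that these four zones must occur in every well-formed realization, which is where the hypothesis that each curve lies in some zone is used.

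For (2), induct on the number of neurons. If $\calC$ is $1$-inductively pierced then, by Definition~\ref{defn: inductively pierced}, there is a $j$-piercing $\lambda$ with $j \in \{0,1\}$ and $\calC \setminus \lambda$ again $1$-inductively pierced, so by induction $I_{\calC \setminus \lambda}$ is $(0)$ or generated by quadrics. Now track the codeword matrix as $\lambda$ is restored. If $j = 0$ then $\calX_\lambda = \{Z \cup \{\lambda\}\}$ with $Z$ already a zone, so the matrix of $\calC$ is that of $\calC\setminus\lambda$ with one new column $z + e_\lambda$, where $e_\lambda$ occurs in no other column; passing to semigroup algebras, $\CC[\NN A_\calC] \iso \CC[\NN A_{\calC\setminus\lambda}][y]$ is a polynomial extension, so $I_\calC$ is generated by the generators of $I_{\calC\setminus\lambda}$ and nothing changes. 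If $j = 1$ with $\Lambda = \{\lambda_1\}$ then $\calX_\lambda = \{Z\cup\{\lambda\},\, Z\cup\{\lambda_1,\lambda\}\}$ with $Z$ and $Z\cup\{\lambda_1\}$ already zones, so we adjoin two columns $z + e_\lambda$ and $z + e_1 + e_\lambda$, and one checks that the resulting semigroup algebra is $\CC[\NN A_{\calC\setminus\lambda}][s,s']$ modulo the single quadratic relation coming from $(z+e_\lambda)+(z+e_1) = (z+e_1+e_\lambda)+z$; thus $I_\calC$ is generated by the lifted quadric generators of $I_{\calC\setminus\lambda}$ together with one new quadric. \emph{I expect the main work to be here}: verifying that this new quadric is the only new relation (which can be checked directly on the affine semigroup $\NN A_\calC$) and handling any column coincidences, so that $\calC\setminus\lambda$ is genuinely the code obtained by deleting the curve $\lambda$.

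For (3), the forward direction combines (2) with an explicit Gr\"obner basis computation: part (2) gives that $I_\calC$ is $(0)$ or generated by quadrics, but to conclude that the \emph{reduced} Gr\"obner basis with respect to the weighted grevlex order with weight vector $w = (0,0,0,1,1,1,0)$ consists of binomials of degree at most $2$, one must actually compute. Since a code on three neurons has at most the eight elements of $\{0,1\}^3$ as codewords, there are finitely many well-formed codes on three neurons, and, up to relabeling the neurons, finitely many that are $1$-inductively pierced; I would enumerate these, compute each reduced Gr\"obner basis with respect to this order, and check the degree bound. The converse is the same finite computation carried out over the well-formed codes on three neurons that are \emph{not} $1$-inductively pierced, confirming that each such reduced Gr\"obner basis contains a binomial of degree at least $3$. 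The content of (3) is therefore the enumeration, together with an argument that the chosen weight vector makes the check come out uniformly; here the difficulty is bookkeeping rather than ideas, since one must be certain the list of well-formed three-neuron codes and their piercing status is complete.
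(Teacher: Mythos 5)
The paper does not prove Theorem~\ref{thm: well-formed properties}: it is stated as a summary of results from \cite{GrossObatakeYoungs} (``see \cite{GrossObatakeYoungs}'') and is used in this paper as an external input. So there is no internal proof to compare your sketch against; the relevant comparison is with the cited reference. Your outline does track how Gross, Obatake, and Youngs actually argue: part (1) via linear independence of the codewords for laminar (non-crossing) families and a local crossing relation for the converse, part (2) by induction peeling off a $0$- or $1$-piercing, and part (3) by a finite case analysis on three-neuron codes. You have also been honest about where the real work sits (the $j=1$ step of part (2), and the enumeration in part (3)), and those are indeed the heavy spots.

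Two points to tighten. In part (2) with $j=1$, if the background zone is $Z=\emptyset$ then the column $z$ you use is the zero codeword, which is deliberately excluded from $\pi_{\calC}$; the ``single quadratic relation'' $st_2 - s't_1$ then degenerates to the \emph{inhomogeneous} binomial $s' - st_2$ (degree two on one side, degree one on the other), so both your bookkeeping and your reading of ``generated by quadratics'' in the statement need to allow degree at most two rather than homogeneous degree two. Relatedly, $I_{\calC}$ is not homogeneous in general (it is for $S_n$ and $P(2_n)$ by Lemma~\ref{lem: homog toric}, but not for arbitrary well-formed codes), so don't let homogeneity slip into the induction. Second, in part (1) the nonemptiness of the four quadrant zones follows from well-formedness conditions (2)--(4) alone (transversal crossing plus ``no point hit more than twice'' makes the four local codewords agree off coordinates $i,j$); the hypothesis ``each curve is contained in some zone'' is doing a different job, namely ruling out a curve $\lambda_i$ with $\calX_{\lambda_i} = \emptyset$, which can never be a $j$-piercing and would stall the deletion induction in part (2).
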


Notice that although there is a complete characterization of $0$-inductively pierced codes in terms of the toric ideal, there is no such characterization for $1$-inductively pierced codes for more than $3$ curves.
If $n$ total curves are used to construct a combinatorial neural code, then there are most $2^n-1$ zones, which is achieved when all possible intersections of curves occur. 
So, we expect it to be computationally challenging to extend Theorem~\ref{thm: well-formed properties} to large numbers of curves.


\subsection{Encoding Initial Ideals via Polyhedra}

If $w,w'$ are two weights on the monomials in $\CC[t_1,\dots,t_m]$ it is possible that $\init_{\prec_w}(I) = \init_{\prec_{w'}}(I)$.
When this happens, we say that $w$ and $w'$ are \emph{equivalent}. 
Given an ideal $I$, the set of weight vectors equivalent to a given $w \in \RR^m$ forms a convex polyhedral cone, and the set of all cones is called the \emph{Gr\"obner fan} of $I$.
When $I$ is \emph{homogeneous} that is, when $I$ is generated by homogeneous polynomials, their union is all of $\RR^m$. 
So, the top-dimensional cones in the Gr\"obner fan of $I$ correspond to the distinct initial ideals of $I$. 
If one can determine a representative in each of these cones, then the work needed to identify the unique initial ideals is significantly reduced.
This process is made easier by introducing polyhedra.

A \emph{polytope}, $P \subseteq \RR^m$, is the convex hull of finitely many points $v_1,\dots,v_k \in \RR^m$, and is written
\[
	P = \conv\{v_1,\dots,v_k\} = \left\{ \sum_{i=1}^k \lambda_iv_i \mid \lambda_1,\dots,\lambda_k \in \RR_{\geq 0}, \sum_{i=1}^k \lambda_i = 1,\, v_i \in \RR^n \text{ for all } i\right\}.
\]
We say that $P$ is \emph{lattice} if its vertices are a subset of $\ZZ^m$.
We could equivalently defined $P$ as the intersection of finitely many halfspaces in $\RR^m$, i.e.
\[
	\bigcap_{i=1}^l \{x = (x_1,\dots,x_m) \mid a_i\cdot x \leq b_i\}
\]
for some choices of $a_1,\dots,a_l \in \RR^m$ and $b_1,\dots,b_l \in \RR$.

A common way to translate between polytopes and polynomials is the following.
Let 
\[
	p(x_1,\dots,x_m) = \sum_{a \in \ZZ^m} c_ax^a \in \CC[x_1^{\pm 1},\dots,x_m^{\pm 1}]
\]
be a Laurent polynomial, that is, $c_a \in \CC$ for all $a \in \ZZ^m$ and $c_a = 0$ for all but finitely many $a \in \ZZ^m$.
The \emph{Newton polytope of $p$} is
\[
	\Newt(p) = \conv\{a \in \ZZ^m \mid c_a \neq 0\}.
\]
Newton polytopes occur in various algebraic settings, such as when bounding the number of isolated complex roots of polynomial systems \cite{Bernstein,Khovanskii,Kushnirenko}, or when solving polynomial systems via homotopy continuation \cite{HuberSturmfels}.

\begin{example}
	A lattice polytope that will be important for us is the \emph{permutohedron}, defined by
	\[
		\Pi_n = \conv\{(\pi_1,\dots,\pi_n) \mid \pi_1\dots\pi_n \in \symm_n\}
	\]
	where $\symm_n$ denotes the symmetric group on $[n]$.
	This polytope is well-known to be $(n-1)$-dimensional, each of its points satisfying $\sum x_i = \binom{n+1}{2}$.
	It can also be described as the Minkowski sum
	\[
		\Pi_n = \{(1,\dots,1)\} + \sum_{1 \leq i < j \leq n} [e_i,e_j]
	\]
	where $[v,w]$ denotes the line segment between $v, w \in \RR^n$.
	This description implies
	$\Pi_n = \Newt(x_1\cdots x_n\det V(x_1,\dots,x_n))$ where $V(x_1,\dots,x_n)$ denotes the Vandermonde matrix in the variables $x_1,\dots,x_n$.
\end{example}

A \emph{face} of the polytope $P \subseteq \RR^m$ is any subset $F\subseteq P$ of the form
\[
	F = P \cap \{x \in \RR^m \mid a \cdot x = b\}
\]
where $a \in \RR^m$ and $b \in \RR$ satisfy $a\cdot x \leq b$ for all $x \in P$.
Now, one may instead choose $a \in \RR^m$ and use it to identify a face of $P$: set
\[
	\face_a(P) = \{x \in P \mid a\cdot v \leq a \cdot x \text{ for all } v \in P\}.
\]
While the points of $\RR^m$ are not in bijective correspondence with the faces of $P$, there is a helpful way to partition $\RR^m$ into equivalence classes according to faces of $P$.

Let $F$ be a face of $P$.
The \emph{normal cone of $F$ at $P$} is
\[
	N_P(F) = \{ a \in \RR^m \mid \face_a(P) = F\}.
\]
The \emph{normal fan of $P$} is
\[
	N(P) = \bigcup_{F \text{ a face of } P} N_P(F).
\]
If $P$ is full-dimensional, then the $k$-dimensional cones of $N(P)$ are in bijection with the $(\dim P - k)$-dimensional faces of $P$. 

\begin{example}\label{ex: normal fan of permuto}
	The normal fan of $\Pi_n$ is well-known to be constructed as follows:
	first, since $\Pi_n$ lies on an affine hyperplane orthogonal to $(1,\dots,1) \in \RR^n$, every cone in $N(\Pi_n)$ contains the subspace $\RR(1,\dots,1)$. 
	Let $N(\Pi_n) / \RR(1,\dots,1)$ denote the set of normal cones in $N(\Pi_n)$ reduced by $\RR(1,\dots,1)$.
	The maximal cones of $N(\Pi_n) / \RR(1,\dots,1)$ are the set of \emph{Weyl chambers}
	\[
		C_{\pi} = \{ (x_1,\dots,x_n) \mid x_{\pi_1} \leq \cdots \leq x_{\pi_n}\},
	\]
	where $\pi = \pi_1\dots\pi_n \in \symm_n$.
	Ranging over all $\pi \in \symm_n$, one obtains $N(\Pi_n)$.
	Notice that $(\pi_1,\dots,\pi_n) \in C_{\pi}$ for all $\pi$.
	Figure~\ref{fig: N(P_3)} is a sketch of $N(\Pi_3)/\RR(1,\dots,1)$ with $\Pi_3$ overlaid and its vertices labeled.
	The normal fan consists of six three-dimensional cones, six two-dimensional cones, and two one-dimensional cones.
	
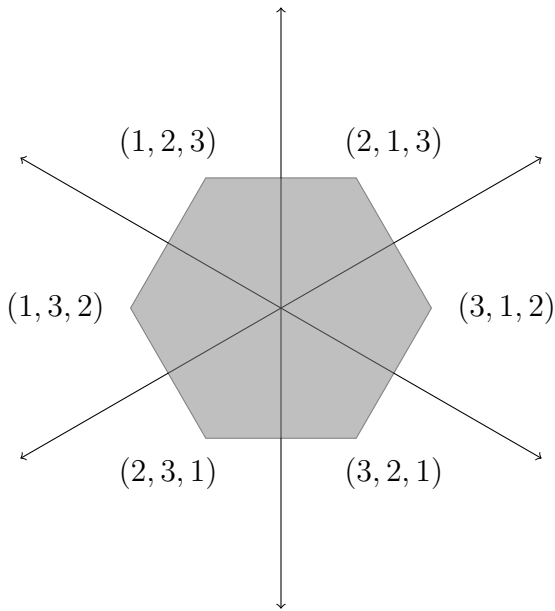
\begin{figure}
	\begin{tikzpicture}
	\draw[<->] (0,-4) -- (0,4);
	\draw[<->] (-3.464,2) -- (3.464,-2);
	\draw[<->] (3.464,2) -- (-3.464,-2);
	
	\draw[fill=gray,opacity=0.5] (0:2) \foreach \x in {60,120,...,359} {
            -- (\x:2)
        } -- cycle (90:2);
        
        \node at (-1.5,2.2) {$(1,2,3)$};
        \node at (1.5,2.2) {$(2,1,3)$};
        \node at (-3,0) {$(1,3,2)$};
        \node at (3,0) {$(3,1,2)$};
        \node at (-1.5,-2.2) {$(2,3,1)$};
        \node at (1.5,-2.2) {$(3,2,1)$};
        
	\end{tikzpicture}
	\caption{The permutohedron $\Pi_3$ overlaid with its normal fan, both viewed from $(1,1,1)$.}\label{fig: N(P_3)}
\end{figure}
\end{example}

The following result is what we will use to connect polytopes and initial ideals of homogeneous ideals. 

\begin{thm}[{\cite[Theorem 2.5]{sturmfels}}]\label{thm: state polytope}
	Let $I$ be a homogeneous ideal in $\CC[t_1,\dots,t_m]$.
	There exists a (lattice) polytope whose normal fan is exactly the Gr\"obner fan of $I$.
\end{thm}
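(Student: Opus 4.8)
The plan is to build the required polytope explicitly as a Minkowski sum of Newton polytopes of Gr\"obner basis elements and then check that its normal fan equals the Gr\"obner fan $\mathrm{GF}(I)$ of $I$. I would first record three facts. (i) For $f\in\CC[t_1,\dots,t_m]$ and a weight $w$, $\Newt(\init_w(f))=\face_w(\Newt(f))$; hence $w$ and $w'$ lie in a common cone of $N(\Newt(f))$ exactly when $\init_w(f)$ and $\init_{w'}(f)$ have the same Newton polytope. (ii) Since $I$ is homogeneous, $\mathrm{GF}(I)$ is a complete polyhedral fan with finitely many maximal cones; a weight $w$ is interior to a maximal cone $C_\sigma$ if and only if $\init_w(I)$ is a monomial ideal, and in that case, writing $\calG_\sigma$ for the reduced Gr\"obner basis, $\overline{C_\sigma}$ is the cone defined by the inequalities $u\cdot a\ge u\cdot b$ taken over all $g\in\calG_\sigma$, where $t^a=\init_w(g)$ and $t^b$ runs over the remaining monomials of $g$. (iii) Faces of a Minkowski sum split as sums of faces: $\face_w(P+Q)=\face_w(P)+\face_w(Q)$.

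Set $\mathcal{U}=\bigcup_\sigma\calG_\sigma$, the union of reduced Gr\"obner bases over the finitely many maximal Gr\"obner cones; this is a universal Gr\"obner basis of $I$. I would then define
\[
	\State(I)=\sum_{g\in\mathcal{U}}\Newt(g),
\]
a Minkowski sum of lattice polytopes, hence itself a lattice polytope, and prove $N(\State(I))=\mathrm{GF}(I)$ by matching maximal cones. For one containment: let $v=\sum_g v_g$ be a vertex of $\State(I)$, with each $v_g$ a vertex of $\Newt(g)$, and let $w$ lie in the interior of $N_{\State(I)}(v)$; by (iii), each $\init_w(g)$ is the single monomial $t^{v_g}$. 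Because $\mathcal{U}$ is a universal Gr\"obner basis, the standard fact that a Gr\"obner basis with respect to a term order refining $w$ yields, via $w$-initial forms, a generating set of $\init_w(I)$ shows that $\init_w(I)$ is generated by the monomials $t^{v_g}$, so it is a monomial ideal; thus $w$ --- and, since the same monomials occur throughout, the whole interior of $N_{\State(I)}(v)$ --- lies in a maximal cone of $\mathrm{GF}(I)$.

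The reverse containment is the crux, and it suffices to show that each maximal Gr\"obner cone $C_\sigma$ lies inside a single maximal cone of $N(\State(I))$. Since $N(\State(I))$ is the common refinement of the fans $N(\Newt(g))$ for $g\in\mathcal{U}$, this reduces to the key lemma: \emph{for every element $g$ of a reduced Gr\"obner basis of $I$, the normal fan $N(\Newt(g))$ is coarsened by $\mathrm{GF}(I)$} --- equivalently, $\init_u(g)$ is a single monomial whenever $\init_u(I)$ is a monomial ideal. Granting the lemma, $\face_u(\Newt(g))$ is a vertex for every $g\in\mathcal{U}$ and every $u$ in the connected interior of $C_\sigma$, so that interior lies entirely within one maximal cone $N_{\State(I)}(v)$; combined with the previous paragraph, and since both $\mathrm{GF}(I)$ (by homogeneity of $I$) and $N(\State(I))$ (as the normal fan of a polytope) are complete fans, the two families of maximal cones coincide and $N(\State(I))=\mathrm{GF}(I)$.

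The main obstacle is the lemma. Its content is that the only way the $u$-initial form of a reduced Gr\"obner basis element $g=t^b-r$ --- where $t^b$ is a minimal generator of its initial ideal and $r$ is supported on standard monomials --- can fail to be a single monomial is for a coincidence of $u$-weights among the terms of $g$ to force a genuine binomial into $\init_u(I)$, preventing the latter from being a monomial ideal. Proving this requires combining the combinatorial structure of reduced Gr\"obner bases with the flatness of the degeneration from $I$ to $\init_u(I)$, which preserves Hilbert functions and so constrains exactly which polynomials, and in particular which non-monomials, can appear in $\init_u(I)$. An essentially equivalent alternative would be to realize $\State(I)$ directly from a single high-degree graded piece $I_d$ via the Bayer--Morrison ``state'' construction, where the same Hilbert-function input is what makes the argument go through.
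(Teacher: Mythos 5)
Your plan takes a genuinely different route from the one behind Sturmfels' Theorem 2.5. Sturmfels constructs the state polytope degree by degree --- the ``Bayer--Morrison'' route you relegate to a parenthetical at the end --- forming, for each $d$ up to a degree bound $D$, the polytope $\State_d(I)=\conv\bigl\{\sum_{t^a\in\init_\prec(I)_d}a \mid \prec \text{ a term order}\bigr\}$ and proving that the normal fan of $\sum_{d\le D}\State_d(I)$ equals $\mathrm{GF}(I)$. Working with entire graded pieces $I_d$ is what lets the constancy of the Hilbert function under Gr\"obner degeneration do the heavy lifting cleanly. You instead propose $\State(I)=\sum_{g\in\calU}\Newt(g)$ and reduce everything to a single key lemma: that $\init_u(g)$ is a \emph{single} monomial for every $g$ in the universal Gr\"obner basis and every $u$ interior to a maximal Gr\"obner cone.

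You flag that lemma as ``the main obstacle'' and do not prove it, and it is not a formality --- this is where the argument has a real gap. The natural first try, that $\init_u(g)\in\init_u(I)$ and $\init_u(I)$ is a monomial ideal, does \emph{not} force $\init_u(g)$ to be a monomial: a monomial ideal contains every polynomial each of whose terms lies in it, so $\init_u(g)$ could be a genuine multi-term polynomial all of whose monomials are $\tau$-nonstandard. What your argument does establish outright is only the easy containment, that $N(\Newt(\calU))$ \emph{refines} $\mathrm{GF}(I)$; ruling out a proper refinement is exactly the content of the lemma. For toric ideals every $g\in\calU$ is a binomial, $\Newt(g)$ is a segment, and the lemma is vacuous --- this is what makes Algorithm~3.5 and the sum-of-Gr\"obner-fibers description (Theorem~7.15 in Sturmfels) work so smoothly in the setting of this paper --- but Theorem~2.5 concerns arbitrary homogeneous $I$, where reduced Gr\"obner basis elements can have many terms with Newton polytopes that are not simplices. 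Notice, too, that Algorithm~3.5 (which the paper reproduces) explicitly filters out duplicate state-polytope vertices as it iterates over vertices of $\Newt(\calU)$, a precaution that is only needed precisely because $N(\Newt(\calU))$ is not claimed to equal $\mathrm{GF}(I)$. So either you must actually supply a proof of the lemma (and I would want to see it before believing it for general $I$), or you should carry out the graded-piece construction you mention at the end, where the Hilbert-function rigidity you appeal to is directly applicable.
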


Any polytope satisfying the conclusion of Theorem~\ref{thm: state polytope} is called a \emph{state polytope} for $I$.
The next section will be dedicated to constructing the state polytopes for $S_n$ and $P(2_n)$.


\section{Constructing the State Polytopes}\label{sec: state pols}


\subsection{For the code $S_n$}

Let $t^a - t^b \in I_A$ for some set of vectors $A \subseteq \ZZ^n$.
We call $t^a - t^b$ \emph{primitive} if there is no $t^v - t^w \in I_A$ for which $t^v$ divides $t^a$ and $t^w$ divides $t^b$.
The set of primitive binomials in $I_A$ is called the \emph{Graver basis} of $I_A$.
The Graver basis of an ideal always contains the universal Gr\"obner basis, but it is rare for the two sets to coincide.
Under certain conditions, though, equality can occur.
In order to state one such condition, recall that if $M$ is an $n \times k$ matrix, then the \emph{Lawrence lifting of $M$} is the matrix
\[
	\Lambda(M) = \begin{bmatrix}
		M & 0 \\
		\id_k & \id_k
	\end{bmatrix}
\]
where $\id_k$ is the $k \times k$ identity matrix and $0$ is the $n \times k$ zero matrix. 

\begin{thm}[{\cite[Theorem~7.1]{sturmfels}}]\label{thm: minimal binomials form universal gb}
	Let $M$ be any finite generating set of $\NN M$.
	The following sets are identical:
	\begin{enumerate}
		\item the universal Gr\"obner basis of $I_{\Lambda(M)}$,
		\item the Graver basis of $I_{\Lambda(M)}$,
		\item the minimal binomial generating set of $I_{\Lambda(M)}$.
	\end{enumerate}
\end{thm}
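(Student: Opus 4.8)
The plan is to identify all three sets with one explicit family of binomials built from the Graver basis $\mathrm{Gr}(M)$ of $M$ itself. Write the $2k$ variables as two blocks $s_1,\dots,s_k$ and $s_1',\dots,s_k'$, so that the column of $\Lambda(M)$ indexed by $s_i$ is $(M_i,e_i)$ and the one indexed by $s_i'$ is $(0,e_i)$. Then $\ker_{\ZZ}\Lambda(M)=\{(v,-v):v\in\ker_{\ZZ}M\}$, and $(v,-v)$ is conformally minimal in $\ker_{\ZZ}\Lambda(M)$ exactly when $v$ is conformally minimal in $\ker_{\ZZ}M$; hence the Graver basis of $I_{\Lambda(M)}$ is $\calB:=\{g_v:v\in\mathrm{Gr}(M)\}$, where $g_v:=s^{v^+}(s')^{v^-}-s^{v^-}(s')^{v^+}$. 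I would also record that $I_{\Lambda(M)}$ is homogeneous for the $\NN^k$-grading $\deg s_i=\deg s_i'=e_i$, so that $g_v$ is homogeneous of multidegree $|v|:=(|v_1|,\dots,|v_k|)$; this grading is what controls cancellation in the later steps.

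The technical core is a conformality lemma: if $v,w\in\ker_{\ZZ}M$ with $v\neq0$ and the monomial $s^{w^+}(s')^{w^-}$ divides $s^{v^+}(s')^{v^-}$, then $w\sqsubseteq v$ in the conformal order. The proof is a coordinatewise check that uses the disjointness of $\supp(v^+)$ and $\supp(v^-)$: divisibility gives $w^+\le v^+$ and $w^-\le v^-$ entrywise, and, entry by entry, the disjointness pins down the sign of each $w_i$ and the bound $|w_i|\le|v_i|$. I expect the main fussiness of the whole argument to live here, together with its deployment in the indispensability step — keeping track of which of the two monomials of a given binomial plays the role of $s^{w^+}(s')^{w^-}$ — rather than anything conceptually deep.

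With the lemma in hand I would carry out two steps. (1) $\calB$ generates $I_{\Lambda(M)}$: since $I_{\Lambda(M)}$ is generated by the binomials $g_w$ over all $w\in\ker_{\ZZ}M$ (standard for toric ideals, \cite{sturmfels}), it suffices to show $g_w\in(\calB)$ by induction on $\sum_i|w_i|$; for $w\notin\mathrm{Gr}(M)$, pick a conformal decomposition $w=w_1+w_2$ into nonzero kernel vectors and use the identity $g_w=s^{w_2^+}(s')^{w_2^-}\,g_{w_1}+s^{w_1^-}(s')^{w_1^+}\,g_{w_2}$, checked by expansion. (2) Each $g_v$ with $v\in\mathrm{Gr}(M)$ is indispensable: every binomial in $I_{\Lambda(M)}$ equals, up to a nonzero scalar, $t^c\bigl(s^{w^+}(s')^{w^-}-s^{w^-}(s')^{w^+}\bigr)$ for some monomial $t^c$ and some $w\in\ker_{\ZZ}M$, so in any expression $g_v=\sum_iq_ih_i$ with each $h_i$ a binomial in $I_{\Lambda(M)}$, I restrict to the multidegree-$|v|$ part and chase the coefficient of $s^{v^+}(s')^{v^-}$: it must arise from some $q_ih_i$, so a monomial of the binomial underlying $h_i$ divides $s^{v^+}(s')^{v^-}$; by the lemma and minimality of $v$ the associated kernel vector is $\pm v$, and then the multidegree constraint forces $t^c=1$, so $h_i$ is a scalar multiple of $g_v$.

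Finally I would assemble the conclusion. Step (2) shows that every binomial generating set of $I_{\Lambda(M)}$ contains a scalar multiple of each $g_v$; combined with step (1) (which also shows no single $g_v$ is redundant in $\calB$), this makes $\calB$ the minimal binomial generating set. Step (2) likewise forces the universal Gr\"obner basis — a generating set of monic binomials, closed under negating elements just as $\mathrm{Gr}(M)$ is — to contain $\calB$; since the universal Gr\"obner basis is contained in the Graver basis, which was recalled above and which we identified with $\calB$, all three sets coincide with $\calB$.
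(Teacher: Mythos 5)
Your proof is correct and is essentially the standard argument from Sturmfels' book, which is the paper's only source for this theorem (the paper itself merely cites \cite[Theorem~7.1]{sturmfels} and gives no independent proof). The key ingredients — identifying $\ker_{\ZZ}\Lambda(M)$ with the antidiagonal copy of $\ker_{\ZZ}M$, the conformality/divisibility lemma, and the indispensability argument via the $\NN^k$-grading — all match the treatment there.
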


Fortunately, the previous theorem is enough for us to construct the universal Gr\"obner basis of $S_n$.
Recall that if $B \in \GL_n(\ZZ)$, then we say the transformation $x \mapsto Bx$ is \emph{unimodular}.
Additionally, if $A \in \ZZ^{n \times k}$, then $I_A = I_{BA}$, since multiplying $A$ by $B$ on the left does not affect linear dependencies of the resulting columns. 

\begin{prop}\label{prop: ugb}
	A Gr\"obner basis for $I_{S_n}$ is
	\[
		U_n = \{t_it_{n+j} - t_jt_{n+i} \mid 1 \leq i < j \leq n\}.
	\]
\end{prop}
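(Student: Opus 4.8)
We show the stronger statement that $U_n$ is in fact the \emph{universal} Gr\"obner basis of $I_{S_n}$. The plan is to recognize the matrix of nonzero codewords of $S_n$ as a Lawrence lifting, apply Theorem~\ref{thm: minimal binomials form universal gb}, and then compute the resulting Graver basis directly.

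First I would record the $(n+1)\times 2n$ matrix $M$ whose $i$-th column is $s_i$, with rows indexed by $e_1,\dots,e_{n+1}$. Two features stand out: $s_i - s_{n+i} = e_1$ for every $i = 1,\dots,n$, and $\{s_{n+1},\dots,s_{2n}\} = \{e_2 + e_{n+1},\dots,e_n + e_{n+1},\,e_{n+1}\}$, a linearly independent set spanning $\langle e_2,\dots,e_{n+1}\rangle$. Write $\mathbf{1}$ for the $1\times n$ all-ones matrix, so that the columns of its Lawrence lifting $\Lambda(\mathbf{1})$ are $e_1 + e_{j+1}$ for $j = 1,\dots,n$ followed by $e_{j+1}$ for $j = 1,\dots,n$. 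I would then introduce the unimodular matrix $B \in \GL_{n+1}(\ZZ)$ that fixes $e_1$ and $e_{n+1}$ and sends $e_j \mapsto e_j - e_{n+1}$ for $2 \le j \le n$, and check that $B s_j = e_1 + e_{j+1}$ and $B s_{n+j} = e_{j+1}$ for all $j$; that is, $BM = \Lambda(\mathbf{1})$. Since left multiplication by a unimodular matrix does not change the toric ideal, $I_{S_n} = I_{\Lambda(\mathbf{1})}$.

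By Theorem~\ref{thm: minimal binomials form universal gb}, the universal Gr\"obner basis of $I_{\Lambda(\mathbf{1})}$ equals its Graver basis, so it remains to find the primitive binomials of $I_{\Lambda(\mathbf{1})}$. A direct computation shows the integer kernel of $\Lambda(\mathbf{1})$ is $\{(c_1,\dots,c_n,-c_1,\dots,-c_n) \mid c \in \ZZ^n,\ \sum_i c_i = 0\}$. I would then argue that a nonzero kernel vector $(c,-c)$ is primitive precisely when $c = e_i - e_j$ for some $i \ne j$: if $\sum_k \max(c_k,0) \ge 2$, then choosing $i$ with $c_i > 0$ and $j$ with $c_j < 0$, the vector $(e_i - e_j,\,-(e_i - e_j))$ lies in the kernel and properly divides $(c,-c)$, whereas when $c = e_i - e_j$ no proper divisor exists. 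The binomial attached to $(e_i - e_j,\,-(e_i - e_j))$ is $t_i t_{n+j} - t_j t_{n+i}$, and as $i \ne j$ varies (a binomial and its negative being the same relation) this produces exactly $U_n$. Hence the Graver basis, and therefore the universal Gr\"obner basis, of $I_{S_n}$ is $U_n$; in particular $U_n$ is a Gr\"obner basis and a generating set for $I_{S_n}$.

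The step requiring the most care is the identification $BM = \Lambda(\mathbf{1})$: one must settle on the right change of coordinates and verify both that it is unimodular and that its effect on the codeword matrix is literally a Lawrence lifting. The primitivity argument is routine; the one mild subtlety is that although $\mathbf{1}$ has repeated columns, $\Lambda(\mathbf{1})$ does not, so Theorem~\ref{thm: minimal binomials form universal gb} applies to $I_{\Lambda(\mathbf{1})}$ unchanged.
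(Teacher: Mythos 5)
Your proof is correct, and the opening move is the same as the paper's: after passing to the matrix $M$ of nonzero codewords, apply the unimodular row operation that subtracts rows $2,\dots,n$ from row $n+1$ (your map $B$ is precisely this operation, written as a linear map on $\RR^{n+1}$), and observe that the result is the Lawrence lifting $\Lambda(\mathbf{1})$. From here, however, the two proofs diverge. The paper does not compute the Graver basis of $I_{\Lambda(\mathbf{1})}$ directly; instead it invokes Proposition~\ref{prop: S_n ind pierced} together with Theorem~\ref{thm: well-formed properties}(2) to conclude that $I_{S_n}$ is generated in degree $\le 2$, observes via Theorem~\ref{thm: minimal binomials form universal gb} that the minimal binomial generating set coincides with the universal Gr\"obner basis, and then performs a case analysis showing that no quadratic binomial outside $U_n$ can appear in a minimal generating set. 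Your route is more self-contained: you identify the integer kernel of $\Lambda(\mathbf{1})$ as $\{(c,-c) : \sum c_i = 0\}$, show that the primitive elements are exactly those with $c = e_i - e_j$, and read off $U_n$ as the Graver basis. This bypasses the dependence on the inductively-pierced machinery entirely and gives a cleaner, purely linear-algebraic argument; what you lose is the conceptual link back to Theorem~\ref{thm: well-formed properties}, which the paper is partly trying to illustrate. Both arguments are sound, and your primitivity argument (any $(c,-c)$ with $\sum_k \max(c_k,0)\ge 2$ is properly divided by some $(e_i-e_j,-(e_i-e_j))$) is correct.
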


\begin{proof}
	Let $A$ denote the $n+1 \times 2n$ matrix for which column $i$ is $s_i$, and let $A_i$ denote row $i$ of $A$.
	The transformation $f$ which replaces $A_{n+1}$ with $A_{n+1} - A_2 - \dots - A_n$ is a unimodular transformation, so $f(A)$ has the same toric ideal as $A$.
	Moreover, 
	\[
		f(A) = \Lambda(\begin{bmatrix} 1 & \cdots & 1 \end{bmatrix}),
	\]
	that is, $f(A)$ is the Lawrence lifting of the $1 \times n$ matrix for which all entries are $1$. 
	By \cite[Theorem~7.1]{sturmfels}, any reduced Gr\"obner basis of $I_{f(A)}$ is the universal Gr\"obner basis of $I_{f(A)}$. 
	Since $I_{f(A)} = I_A$, any reduced Gr\"obner basis of $I_{f(A)}$ is the universal Gr\"obner basis for $I_A$, and therefore also for $I_{S_n}$. 
	
	By Proposition~\ref{prop: S_n ind pierced} and Theorem~\ref{thm: well-formed properties}, part $2$, $I_{S_n}$ is either trivial or generated by quadratics.
	Since $I_{S_n}$ is homogeneous, it follows from Theorem~\ref{thm: minimal binomials form universal gb} that if we can show $U_n$ is a minimal generating set for $I_{S_n}$, then $U_n$ is the universal Gr\"obner basis of $I_{S_n}$.
	
	It is clear that the binomials in $U_n$ are primitive, so these must all be part of the Graver basis of $I_{f(A)}$.
	So, they are part of a minimal binomial generating set of $I_{f(A)}$.
	It suffices to show that no other homogeneous quadratic binomial is part of a minimal generating set of $I_{f(A)}$.
	
	Suppose $t_it_j - t_kt_l$ is a generator of $I_{f(A)}$ that is not in $U_n$, and without loss of generality assume $i < j$. 
	First assume $i,j \leq n$.
	Then $\pi(t_it_j)$ is divisible by $x_1^2$, implying that $k,l \leq n$.
	However, this forces $\{k,l\} = \{i,j\}$ since $\pi(t_it_j)$ and $\pi(t_kt_l)$ are also both divisible by $x_{i+1}x_{j+1}$, and there is only one such possibility since $i,j \leq n$.
	So, $t_it_j = t_kt_l$, which contradicts the assumption that $g$ is a generator of $I_{f(A)}$.
	A similar contradiction occurs if we assume $n < i,j$.
	Therefore, $i \leq n < j$. 
	It is clear that, in this case, there is a unique choice of $k,l$ for which $t_it_j-t_kt_l \in I_{f(A)}$, and this choice is an element of $U_n$.
	Therefore, $U_n$ is a minimal generating set of $I_{f(A)}$, and is the universal Gr\"obner basis of $I_{S_n}$.	
\end{proof}

There are various algorithms for constructing state polytopes of $I$ given known information about the ideal, some of which can be rather inefficient.
However, since we have already identified the universal Gr\"obner basis for homogeneous star codes, we can invoke a more efficient method.
Given an ideal $I \subseteq \CC[t_1,\dots,t_m]$ generated by monomials, let $\sum I_d$ denote the sum of all vectors $a \in \NN^m$ for which $x^a$ has degree $d$ and $t^a \in I$.

\begin{alg}[{\cite[Algorithm~3.5]{sturmfels}}]\label{alg: state pol}
	Given the universal Gr\"obner basis $U$ of a homogeneous ideal $I$ in $\CC[t_1,\dots,t_m]$, the following algorithm produces a state polytope for $I$:
	\begin{enumerate}
		\item Let $D = \max\{\deg(t^a - t^b) \mid t^a - t^b \in U\}$.
		\item Set
			\[
				\Newt(U) = \sum_{p \in U} \Newt(p).
			\]
		\item For each vertex $v$ of $\Newt(U)$, do the following:
			\begin{enumerate}
				\item Select a vector $w \in N_{\Newt(U)}(\{v\})$ arbitrarily.
				\item Determine the initial ideal $\init_{\prec_w}(I) = (\init_{\prec_c}(p) \mid p \in U)$.
				\item Compute the point $\sum_{d=1}^D \init_{\prec_w}(I)_d$.
					This is a vertex of the state polytope; if it has not been previously output, do so now. 
			\end{enumerate}
	\end{enumerate}
\end{alg}

It is apparent that some portions of Algorithm~\ref{alg: state pol} simplify right away; namely, since $U_n$ consists of quadratic binomials, part $3$(c) will output a sum of vectors of the form $e_i + e_j$, $i \neq j$. 

Two polytopes $P,P' \subseteq \RR^m$ are \emph{unimodularly equivalent} if there is some matrix $M \in \GL_m(\ZZ)$ and some vector $v \in \ZZ^m$ for which $P'$ is the image of $P$ under $x \mapsto Mx + v$.
If $P \subseteq \RR^n$ and $P' \subseteq \RR^m$ with $n > m$, then we also say $P$ and $P'$ are unimodularly equivalent if $P$ is unimodularly equivalent to $P' \times \{0_{n-m}\}$. 
Unimodular equivalence is stronger than an injective image; unimodular equivalence also preserves (relative) volumes, lattice point data, and combinatorial equivalence

\begin{prop}\label{prop: newt is permuto}
	The polytope $\Newt(U_n)$ is unimodularly equivalent to $\Pi_n$.
\end{prop}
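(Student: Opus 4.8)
The plan is to recognize $\Newt(U_n)$ directly as a Minkowski sum that matches one of the standard descriptions of the permutohedron. Recall that each binomial in $U_n$ is $t_it_{n+j} - t_jt_{n+i}$ for $1 \leq i < j \leq n$, so its Newton polytope is the line segment $[e_i + e_{n+j},\, e_j + e_{n+i}]$ in $\RR^{2n}$. Hence
\[
	\Newt(U_n) = \sum_{1 \leq i < j \leq n} [e_i + e_{n+j},\, e_j + e_{n+i}].
\]
First I would translate this sum by a fixed vector to clear the ``constant part.'' Writing $e_i + e_{n+j} = (e_i + e_j) + (e_{n+j} - e_j)$ does not quite work since the split is not symmetric in $i,j$; instead the cleaner bookkeeping is to observe that for each pair $\{i,j\}$, both endpoints of the segment have the form (one of $e_i,e_j$) plus (one of $e_{n+i},e_{n+j}$), and the two choices are correlated: the endpoint using $e_i$ uses $e_{n+j}$, and vice versa. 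So set $y_k = e_k - e_{n+k}$ for $k = 1,\dots,n$; then $e_i + e_{n+j} = \tfrac12(e_i + e_j) + \tfrac12(e_{n+i}+e_{n+j}) + \tfrac12(y_i - y_j)$ and similarly with $i,j$ swapped the last term becomes $\tfrac12(y_j - y_i)$. Therefore each segment equals a fixed point (depending on $\{i,j\}$) plus $\tfrac12[y_i - y_j,\, y_j - y_i] = \tfrac12[-(y_j-y_i),\,(y_j-y_i)]$, which up to translation is the segment $[0, y_j - y_i]$ scaled. After summing over all pairs and translating by the total of the fixed points, we get that $\Newt(U_n)$ is a translate of $\sum_{i<j}[\,0,\ y_j - y_i\,]$ (possibly with a harmless factor I will absorb), i.e.\ of $\sum_{i<j}[y_i, y_j]$ up to translation.

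Next I would exhibit the unimodular map. The vectors $y_1,\dots,y_n \in \ZZ^{2n}$ are the images of the standard basis $e_1,\dots,e_n \in \RR^n$ under the linear map $B\colon \RR^n \to \RR^{2n}$, $e_k \mapsto e_k - e_{n+k}$; this $B$ is injective with image a direct summand of $\ZZ^{2n}$ (its matrix is $\bigl[\begin{smallmatrix} \id_n \\ -\id_n \end{smallmatrix}\bigr]$, whose maximal minors include $\det \id_n = 1$), so $B$ extends to an element of $\GL_{2n}(\ZZ)$ after choosing a complementary basis, and carries the lattice $\ZZ^n$ isomorphically onto the sublattice spanned by the $y_k$. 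Under $B^{-1}$ on that span, $\sum_{i<j}[y_i,y_j]$ pulls back to $\sum_{i<j}[e_i,e_j]$, which is exactly $\Pi_n$ minus its translation vector $(1,\dots,1)$ by the Minkowski-sum description of $\Pi_n$ given in the excerpt. Composing with the translations already introduced finishes the identification, and the fact that unimodular equivalence allows ambient dimension mismatch (stated in the excerpt) handles the passage between $\RR^{2n}$ and $\RR^n$.

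The main obstacle I anticipate is purely bookkeeping rather than conceptual: making the ``fixed point plus scaled symmetric segment'' decomposition of each edge $[e_i + e_{n+j}, e_j + e_{n+i}]$ come out with integer (not half-integer) translations so that the equivalence is genuinely unimodular and not merely affine. The clean way around this is to avoid the midpoint split entirely: note $e_i + e_{n+j}$ and $e_j + e_{n+i}$ differ by $(e_i - e_j) - (e_{n+i} - e_{n+j}) = (e_i - e_{n+i}) - (e_j - e_{n+j}) = y_i - y_j$, so $\Newt(U_n)$ is literally a translate of $\sum_{i<j} [\,0,\, y_i - y_j\,] = \sum_{i<j}[y_j, y_i] + (\text{translation})$, with \emph{integer} translation vector $\sum_{i<j}(e_j + e_{n+i})$. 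Then $B$ as above (no factor of $\tfrac12$) does the job on the nose, and the only remaining check is that $\sum_{i<j}[e_i,e_j]$ is a translate of $\Pi_n$, which is immediate from the displayed Minkowski description of $\Pi_n$. I would also remark that this same computation shows $\Newt(U_n)$ is a lattice polytope combinatorially equivalent to $\Pi_n$, which is what feeds into the state-polytope discussion.
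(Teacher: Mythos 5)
Your proof is correct and, at a high level, follows the same outline as the paper: both start from the Minkowski-sum description $\Newt(U_n) = \sum_{1 \leq i<j \leq n}[e_i+e_{n+j},\, e_j+e_{n+i}]$ and exhibit an affine unimodular map identifying it with a lattice translate of $\sum_{i<j}[e_i,e_j] = \Pi_n - (1,\dots,1)$. Where you genuinely diverge is in how that map is produced. The paper first records the affine relations $x_i + x_{n+i} = n-1$ satisfied by every vertex of $\Newt(U_n)$, then writes down an explicit matrix $L \in \GL_{2n}(\ZZ)$ (adding coordinate $i$ to coordinate $n+i$) so that after a translation the last $n$ coordinates vanish, landing on $(\Pi_n - (1,\dots,1)) \times \{0_n\}$. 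You instead work from the edge directions: each summand segment is a lattice translate of $[0, y_i - y_j]$ with $y_k = e_k - e_{n+k}$, so $\Newt(U_n)$ is a lattice translate of $B\bigl(\sum_{i<j}[0, e_i - e_j]\bigr)$ for the embedding $B\colon e_k \mapsto y_k$, and you invoke the primitivity criterion (the matrix $\bigl[\begin{smallmatrix}\id_n\\-\id_n\end{smallmatrix}\bigr]$ has a unimodular $n\times n$ minor, hence its columns extend to a $\ZZ$-basis of $\ZZ^{2n}$) to extend $B$ to $\GL_{2n}(\ZZ)$. The two constructions are essentially inverse to one another; yours is more implicit but sidesteps the affine-relations bookkeeping and makes the role of the sublattice $\ZZ y_1 + \cdots + \ZZ y_n$ transparent, while the paper's version hands you a concrete $2n\times 2n$ matrix. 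Your self-correction to avoid the midpoint split was the right instinct: passing through $\tfrac12(y_i - y_j)$ would only give an affine equivalence, whereas the integer translation by $\sum_{i<j}(e_j + e_{n+i})$ keeps everything in $\GL_{2n}(\ZZ)\ltimes\ZZ^{2n}$. One small point worth flagging when you compare with the paper's text: the translation vector there is stated as $v = e_{n+1}+\cdots+e_{2n}$, but the accompanying computation $w_{k-n}+w_k-(n-1)=0$ shows the intended $v$ is $(n-1)(e_{n+1}+\cdots+e_{2n})$; your route avoids this issue entirely.
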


\begin{proof}
	The Newton polytope of $U_n$ is
	\[
		\Newt(U_n) = \sum_{1 \leq i < j \leq n} [e_i + e_{n+j}, e_j + e_{n+i}].
	\]
	Each vertex of a Minkowski sum is the sum of vertices of the Minkowski summands.
	By restricting to the first $n$ coordinates, $\Newt(U_n)$ projects to $\Pi_n$.
	In our case, for each vertex $v$ of $\Newt(U_n)$, whenever $e_i + e_{n+j}$ is selected to construct $v$, there is an instance of $e_j + e_{n+i}$ that is not chosen to contribute.
	In particular, if an endpoint involving $e_i$ is chosen $k$ times, then the endpoint involving $e_{n+k}$ is unchosen $k$ times, that is, $e_{n+k}$ is chosen $n-1-k$ times. 
	So, each vertex of $\Newt(U_n)$ satisfies the equations $x_i + x_{n+i} = n - 1$.
	
	Let $f:\RR^{2n} \to \RR^{2n}$ be defined by $f(x) = Lx - v$, where $v = e_{n+1} + \cdots + e_{2n}$ and $L = (l_{i,j})$ is the matrix
	\[
		l_{i,j} = \begin{cases}
				1 & \text{ if } i=j \text{ or } i = j+n \\
				0 & \text{ else }
			\end{cases}
	\]
	It is clear that $f$ fixes $\Newt(U_n)$ in the first $n$ coordinates.
	In coordinate $k \in \{n+1,\dots,2n\}$ of $w = (w_1,\dots,w_{2n}) \in \Newt(U_n)$, $w_k$ is sent to $w_{k-n} + w_k - (n-1) = (n-1) - (n-1) = 0$. 
	So, 
	\[\begin{aligned}
		f(\Newt(U_n)) &= f(\sum_{1 \leq i < j \leq n} [e_i + e_{n+j}, e_j + e_{n+i}]) \\
				&= \sum_{1 \leq i < j \leq n} [e_i, e_j] \\
				&= (\Pi_n - (1,\dots,1)) \times \{0_n\}.
	\end{aligned}\]
	Since $v$ is a lattice point and $\det L = 1$, $f$ is a unimodular transformation.
	Therefore, $\Newt(U_n)$ and $\Pi_n$ are unimodularly equivalent.
\end{proof}

An explicit description of the vertices for $\Newt(U_n)$ follows immediately.
Given $\pi = a_1\dots a_n \in \symm_n$, let $\pi^c = (n+1-a_1)(n+1-a_2)\dots(n+1-a_n)$ denote the \emph{complement} of $\pi$. 

\begin{cor}\label{cor: vertices of state pol}
	The vertices of $\Newt(U_n)$ are of the form $(\pi,\pi^c) - (1,\dots,1)$ where $\pi \in \symm_n$. 
\end{cor}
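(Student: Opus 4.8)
The plan is to read off the vertices directly from the unimodular transformation $f$ constructed in the proof of Proposition~\ref{prop: newt is permuto}, which relates $\Newt(U_n)$ to $\Pi_n$. The key observation is that $f(x) = Lx - v$ fixes the first $n$ coordinates and zeroes out the last $n$; moreover $f$ maps $\Newt(U_n)$ onto $(\Pi_n - (1,\dots,1)) \times \{0_n\}$, whose vertices are exactly the points $(\pi - (1,\dots,1), 0_n)$ as $\pi$ ranges over $\symm_n$ (identifying a permutation $\pi = a_1\dots a_n$ with the vector $(a_1,\dots,a_n) \in \ZZ^n$). Since $f$ is a bijection carrying vertices to vertices, each vertex $w$ of $\Newt(U_n)$ satisfies $f(w) = (\pi - (1,\dots,1), 0_n)$ for a unique $\pi \in \symm_n$, so it suffices to invert $f$ on such a point.

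First I would record the inverse: from the definition of $L$ and $v$, the map $f$ sends $(w_1,\dots,w_{2n})$ to the point whose $k$-th coordinate for $k \le n$ is $w_k$ and whose $(n+k)$-th coordinate is $w_k + w_{n+k} - (n-1)$ (using $v = e_{n+1} + \cdots + e_{2n}$, so subtracting $v$ contributes $-1$ in those coordinates; the factor $(n-1)$ comes from the constraint $x_i + x_{n+i} = n-1$ established in that proof). Solving $f(w) = (\pi - (1,\dots,1), 0_n)$ coordinatewise gives $w_k = a_k - 1$ for $k \le n$, and then from $w_k + w_{n+k} - (n-1) = 0$ we get $w_{n+k} = (n-1) - (a_k - 1) = n - a_k = (n+1-a_k) - 1$. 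Thus $w = (\pi, \pi^c) - (1,\dots,1)$, exactly as claimed.

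The only mild subtlety — and I expect this to be the main thing to get right — is bookkeeping the constant $(n-1)$ and making sure the shift vectors in both the source and target are handled consistently: the proof of Proposition~\ref{prop: newt is permuto} writes the image as $(\Pi_n - (1,\dots,1)) \times \{0_n\}$, so one must remember to add back $(1,\dots,1)$ to the $n$-dimensional permutohedron vertex $\pi$ before inverting, and likewise the $\pi^c$ block emerges already shifted by $-1$. Everything else is a one-line linear algebra computation, and since $f$ is unimodular (hence in particular an affine bijection preserving the face lattice), no separate verification that these points are genuinely vertices is needed — they are the images under $f^{-1}$ of the vertices of $(\Pi_n - (1,\dots,1)) \times \{0_n\}$, and $f^{-1}$ is affine linear. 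I would phrase the proof in two or three sentences along exactly these lines, citing Proposition~\ref{prop: newt is permuto} for $f$ and the constraint $x_i + x_{n+i} = n-1$.
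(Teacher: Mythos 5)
Your approach is exactly what the paper intends: the corollary is stated as ``following immediately'' from Proposition~\ref{prop: newt is permuto}, and what you have done is spell out that immediate inversion of $f$, which is correct and gives the stated vertex description. One small bookkeeping remark: you write that $f$ sends the $(n+k)$-th coordinate to $w_k + w_{n+k} - (n-1)$ while also saying that ``subtracting $v = e_{n+1}+\cdots+e_{2n}$ contributes $-1$'' — these two statements are inconsistent, and this inconsistency is inherited from the paper itself, where the displayed $v$ does not match the computation two lines later (for the computation and for $f(\Newt(U_n))=(\Pi_n-(1,\dots,1))\times\{0_n\}$ to hold, one needs $v=(n-1)(e_{n+1}+\cdots+e_{2n})$). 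You used the corrected constant $(n-1)$ in the actual solve, so your final formula $w=(\pi,\pi^c)-(1,\dots,1)$ is right; you may want to note that $v$ as written in Proposition~\ref{prop: newt is permuto} should be scaled by $n-1$.
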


We also have the following useful lemma.

\begin{lemma}\label{lem: vertices in Weyl chambers}
	For each vertex $v$ of $\Newt(U_n)$, $v \in N_{\Newt(U_n)}(\{v\})$. 
\end{lemma}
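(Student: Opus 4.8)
\textit{Proof proposal.} The plan is to work directly from the Minkowski-sum description
\[
  \Newt(U_n)=\sum_{1\le i<j\le n}[\,e_i+e_{n+j},\;e_j+e_{n+i}\,]
\]
established in the proof of Proposition~\ref{prop: newt is permuto}, together with the standard fact that $\face_a$ distributes over Minkowski sums: for any weight vector $a$ one has $\face_a(\Newt(U_n))=\sum_{1\le i<j\le n}\face_a([\,e_i+e_{n+j},\,e_j+e_{n+i}\,])$. So for a fixed vertex $v$ it suffices to understand, for each pair $i<j$, which endpoint of the segment $[\,e_i+e_{n+j},\,e_j+e_{n+i}\,]$ maximizes $x\mapsto v\cdot x$, and this comparison reduces to comparing $v_i+v_{n+j}$ with $v_j+v_{n+i}$, i.e.\ $v_i-v_{n+i}$ with $v_j-v_{n+j}$.

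First I would use Corollary~\ref{cor: vertices of state pol} to write $v=(\pi,\pi^c)-(1,\dots,1)$ for some $\pi\in\symm_n$, so that $v_i=\pi_i-1$ and $v_{n+i}=n-\pi_i$, hence $v_i-v_{n+i}=2\pi_i-(n+1)$. Since $\pi$ is a bijection, these $n$ numbers are pairwise distinct, so $v\cdot(e_i+e_{n+j})\ne v\cdot(e_j+e_{n+i})$ for every pair; therefore $\face_v$ meets each segment in a single endpoint and $\face_v(\Newt(U_n))$ is a single point $p$. Concretely, the chosen endpoint on the $(i,j)$ segment is $e_i+e_{n+j}$ when $\pi_i>\pi_j$ and $e_j+e_{n+i}$ when $\pi_i<\pi_j$.

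The remaining---and main---step is to verify that the point $p$ produced this way is $v$ itself, which is precisely where the content of the lemma lies, since in general a vertex of a polytope need not lie in its own normal cone (for instance, translating $\Newt(U_n)$ within the affine subspace $\{x_i+x_{n+i}=n-1\}$ leaves its normal fan unchanged). I would do this by a coordinate count using the sign rule above: for $k\le n$, a summand feeds a $1$ into coordinate $k$ of $p$ exactly when $k$ is the smaller index of a pair $(k,j)$ with $\pi_k>\pi_j$ or the larger index of a pair $(i,k)$ with $\pi_i<\pi_k$, and together these count the indices $l$ with $\pi_l<\pi_k$, of which there are $\pi_k-1=v_k$; similarly coordinate $n+k$ of $p$ counts the indices $l$ with $\pi_l>\pi_k$, namely $n-\pi_k=v_{n+k}$. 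Thus $p=v$, so $\face_v(\Newt(U_n))=\{v\}$ and $v\in N_{\Newt(U_n)}(\{v\})$. The only real delicacy is the bookkeeping in this final tally---keeping straight which endpoint of a segment contributes to which of the two blocks of coordinates---so I would organize the count by whether a given index plays the role of $i$ or of $j$ in a pair.
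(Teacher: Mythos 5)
Your proof is correct, and it takes a genuinely different route from the paper's. You argue directly from the Minkowski-sum description: after invoking the distributivity of $\face_a$ over Minkowski sums, you reduce the selection on the $(i,j)$ segment to comparing $v_i - v_{n+i} = 2\pi_i - (n+1)$ with $v_j - v_{n+j}$, note these differences are pairwise distinct so that $\face_v(\Newt(U_n))$ is a single point, and then confirm by the coordinate tally that this point is $v$ itself. The paper instead cites Example~\ref{ex: normal fan of permuto} for the fact that each vertex $\pi$ of $\Pi_n$ lies in its own Weyl chamber, lifts this to $\Pi_n \times 0_n$, and then transports the conclusion back to $\Newt(U_n)$ through the unimodular map $f$ of Proposition~\ref{prop: newt is permuto}.

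Your approach buys something concrete. Under an affine map $x \mapsto Lx + b$, vertices of a polytope move by $L$, whereas the normal fan moves by $(L^\top)^{-1}$; for a non-orthogonal $L$ these are different, and the $L$ in Proposition~\ref{prop: newt is permuto} is lower block-triangular and not orthogonal. The paper's step ``$f^{-1}(\pi \times 0_n) = (\pi,\pi^c) - (1,\dots,1)$'' applies a single transformation to transfer both the vertex and the weight vector simultaneously, which glosses over this distinction. Your parenthetical remark --- that translating $\Newt(U_n)$ within its affine hull changes the vertices without changing the normal fan, so the lemma is not a formal consequence of the identification with $\Pi_n$ --- correctly pinpoints why a direct verification is needed, and your coordinate count supplies it. The paper's route is shorter on the page, but yours is the more carefully justified of the two.
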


\begin{proof}
	Let $0_n = (0,\dots,0) \in \RR^n$, and treat permutations in $\symm_n$ as vectors of $\RR^n$.
	As seen in Example~\ref{ex: normal fan of permuto}, $\pi \in N_{\Pi_n}(v)$.
	So, $\pi \times \{0_n\} \in N_{\Pi_n \times 0_n}(\pi \times 0_n)$. 
	Following the unimodular transformation from the previous proposition, $f^{-1}(\pi \times 0_n) = (\pi,\pi^c) - (1,\dots,1)$.  
\end{proof}

Describing the initial ideals of $I_{S_n}$ will be made easiest by introducing one more definition.
Let $\pi \in \symm_n$.
The \emph{inversion set} of $\pi$ is
\[
	\Inv(\pi) = \{(i,j) \in [n]^2 \mid i < j,\, \pi_i > \pi_i\},
\]
and an element of $\Inv(\pi)$ is called an \emph{inversion} of $\pi$.

\begin{lemma}\label{lem: init ideals}
	Let $v = (\pi,\pi^c)$ for some $\pi \in \symm_n$.
	If $i < j$, then
	\[
		\init_{\prec_v}(t_it_{n+j} - t_jt_{n+i}) = \begin{cases}
				t_it_{n+j} & \text{ if } \pi_i > \pi_j \\
				t_jt_{n+1}& \text{ if } \pi_j > \pi_i \\
			\end{cases}.
	\]	
	Consequently, 
	\[
		\init_{\prec_v}(I) = (t_it_{n+j} \mid (i,j) \in \Inv(\pi))
	\]
	and these initial ideals, ranging over all vertices $v$ of $\Newt(U_n)$, are pairwise distinct.
\end{lemma}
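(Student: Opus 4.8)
The plan is to verify the stated formula for the initial term of each generator directly from the definition of the weight order $\prec_v$, then assemble the description of $\init_{\prec_v}(I)$, and finally argue pairwise distinctness.

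First I would compute, for a fixed pair $i<j$ and $v = (\pi,\pi^c)$, the dot products $v \cdot (e_i + e_{n+j})$ and $v \cdot (e_j + e_{n+i})$. The first equals $\pi_i + (\pi^c)_j = \pi_i + (n+1-\pi_j)$, and the second equals $\pi_j + (n+1-\pi_i)$. Their difference is $2(\pi_i - \pi_j)$, so $v \cdot (e_i + e_{n+j}) > v \cdot (e_j+e_{n+i})$ precisely when $\pi_i > \pi_j$. Since $\pi$ is a permutation we never have $\pi_i = \pi_j$, so one of the two monomials strictly dominates, giving exactly the claimed case split (correcting the evident typo $t_jt_{n+1}$ to $t_jt_{n+i}$). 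Here I would note that the weight vector $v$ need not by itself be a term order, but by Lemma~\ref{lem: vertices in Weyl chambers} $v$ lies in the interior of the normal cone $N_{\Newt(U_n)}(\{v\})$, and by Algorithm~\ref{alg: state pol} together with the fact that $U_n$ is the universal Gröbner basis (Proposition~\ref{prop: ugb}), the initial ideal with respect to any term order refining $v$ is generated by the $v$-leading terms of the elements of $U_n$; hence $\init_{\prec_v}(I) = (\init_{\prec_v}(p) \mid p \in U_n)$ with each $\init_{\prec_v}(p)$ the monomial just identified.

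Next, reading off the case split, $\init_{\prec_v}(t_it_{n+j} - t_jt_{n+i}) = t_it_{n+j}$ exactly when $i<j$ and $\pi_i > \pi_j$, i.e.\ exactly when $(i,j) \in \Inv(\pi)$; in the other case the leading term is $t_jt_{n+i}$, which is the leading term contributed by the pair $(i,j)$ read in the inverted order and so is also of the form $t_at_{n+b}$ with $(a,b) \in \Inv(\pi)$. Either way the monomials appearing as leading terms, as $(i,j)$ ranges over $1 \le i < j \le n$, are precisely $\{t_it_{n+j} \mid (i,j) \in \Inv(\pi)\}$, giving $\init_{\prec_v}(I) = (t_it_{n+j} \mid (i,j) \in \Inv(\pi))$.

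For pairwise distinctness: the map $v = (\pi,\pi^c) \mapsto \Inv(\pi)$ determines the ideal, so it suffices to recall that a permutation $\pi \in \symm_n$ is uniquely determined by its inversion set $\Inv(\pi)$ (this is the standard bijection between permutations and their inversion sets / Lehmer codes), and that by Corollary~\ref{cor: vertices of state pol} the vertices of $\Newt(U_n)$ are in bijection with $\symm_n$. Hence distinct vertices $v$ give distinct $\pi$, hence distinct inversion sets, hence distinct monomial ideals. I do not expect a serious obstacle here; the only point requiring care is the justification that $\init_{\prec_v}$ of the ideal is computed generator-by-generator from $U_n$ — which is exactly what the universal Gröbner basis property buys us — and keeping the bookkeeping of which generator contributes which monomial straight when $\pi_i < \pi_j$.
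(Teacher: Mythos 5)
Your leading-term computation is correct and is essentially the same calculation as the paper's, just made explicit via the dot products $v\cdot(e_i+e_{n+j}) = \pi_i + (n+1-\pi_j)$ and $v\cdot(e_j+e_{n+i}) = \pi_j + (n+1-\pi_i)$ with difference $2(\pi_i-\pi_j)$; the paper phrases the same step as ``$\pi_i > \pi_j$ implies $v_{n+j} > v_{n+i}$, add the inequalities.'' You are also right that $t_jt_{n+1}$ in the statement is a typo for $t_jt_{n+i}$. Your justification that $\init_{\prec_v}(I)$ is generated by the $v$-leading terms of the elements of $U_n$ (via the universal Gr\"obner basis property and the interior-of-normal-cone observation from Lemma~\ref{lem: vertices in Weyl chambers}) is a helpful explicit flag of a step the paper leaves implicit.

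Where you genuinely depart from the paper is in the pairwise distinctness argument. The paper argues geometrically: the vectors $(\pi,\pi^c)$ lie in distinct maximal cones of $N(\Newt(U_n))$, hence yield distinct initial ideals. You argue combinatorially: the initial ideal is read off the inversion set, and $\pi \mapsto \Inv(\pi)$ is injective, so distinct vertices give distinct ideals. Both are valid (and your route is more self-contained, since it does not rely on knowing that $\Newt(U_n)$ is a state polytope or that distinct cones give distinct ideals), but for either argument one should say why the generating monomials determine the ideal — this holds here because the leading terms are squarefree quadratics, pairwise non-divisible, hence a minimal generating set of the monomial ideal.

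One detail worth flagging in your writeup (shared with the paper): with the stated definition $\Inv(\pi) = \{(i,j) : i<j,\ \pi_i>\pi_j\}$, the formula $\init_{\prec_v}(I) = (t_it_{n+j} \mid (i,j)\in\Inv(\pi))$ does not account for the generators whose leading term is $t_jt_{n+i}$ when $i<j$ but $\pi_i<\pi_j$; in your second paragraph the assertion that $t_jt_{n+i}$ ``is also of the form $t_at_{n+b}$ with $(a,b)\in\Inv(\pi)$'' quietly drops the ordering constraint in $\Inv(\pi)$, since the pair witnessing it would be $(j,i)$ with $j>i$. The clean fix is to read the formula as $\init_{\prec_v}(I) = (t_at_{n+b} \mid a\neq b,\ \pi_a > \pi_b)$, which gives one generator per unordered pair; this resolves the apparent undercount (e.g.\ for $\pi=\id$ the unfixed formula would give the zero ideal, which is wrong) and makes your inversion-set injectivity argument go through verbatim.
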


\begin{proof}
	First note that if $\pi_i > \pi_j$, then $\pi_{n+j} > \pi_{n+i}$.
	Adding corresponding sides of the inequalities, we get $\pi_i + \pi_{n+j} > \pi_j + \pi_{n+i}$, which implies $t_it_{n+j} \succ_v t_jt_{n+i}$.
	Similarly, if $\pi_i < \pi_j$, then $t_it_{n+j} \succ_v t_jt_{n+i}$.
	This proves the first part of the lemma.
	The last claim follows from ranging over all $\pi \in \symm_n$ and recognizing that the initial ideals are pairwise distinct since each $(\pi,\pi^c)$ lies in a distinct maximal cone of $N(\Newt(U_n))$. 
\end{proof}

\begin{thm}
	The state polytope of $I_{S_n}$ is unimodularly equivalent to $\Pi_n$.
	In particular, there are exactly $n!$ initial ideals of $I_{S_n}$, which can be identified by the weight vectors $(\pi,\pi^c)$, ranging over all $\pi \in \symm_n$. 
\end{thm}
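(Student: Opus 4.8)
The plan is to run Algorithm \ref{alg: state pol} on the universal Gr\"obner basis $U_n$ of Proposition \ref{prop: ugb}, but to shortcut it by showing that the normal fan of $\Newt(U_n)$ is \emph{already} the Gr\"obner fan of $I_{S_n}$; then $\Newt(U_n)$ is itself a state polytope, and Proposition \ref{prop: newt is permuto} delivers the unimodular equivalence with $\Pi_n$.

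First, because $U_n$ is a Gr\"obner basis for \emph{every} weight order, $\init_{\prec_w}(I_{S_n}) = (\init_{\prec_w}(p) : p \in U_n)$ for all $w$, and for $p = t_it_{n+j} - t_jt_{n+i}$ the initial term depends only on which endpoint of the edge $\Newt(p) = [e_i + e_{n+j},\, e_j + e_{n+i}]$ is selected by $w$. So $\init_{\prec_w}(I_{S_n})$ is constant as $w$ ranges over the relative interior of a maximal cone of $N(\Newt(U_n))$, i.e. $N(\Newt(U_n))$ refines the Gr\"obner fan of $I_{S_n}$. Next comes a counting argument: $\Newt(U_n)$ has exactly $n!$ vertices (Corollary \ref{cor: vertices of state pol}), hence $N(\Newt(U_n))$ has $n!$ maximal cones; meanwhile, for each $\pi \in \symm_n$ the weight $(\pi,\pi^c)$ lies in the normal cone of the vertex $(\pi,\pi^c) - (1,\dots,1)$ by Lemma \ref{lem: vertices in Weyl chambers} (using that $I_{S_n}$ is standard-graded with $U_n$ quadratic, so adding $(1,\dots,1)$ to a weight changes no initial term), and there the initial ideal is $(t_it_{n+j} : (i,j) \in \Inv(\pi))$ by Lemma \ref{lem: init ideals}, these $n!$ ideals being pairwise distinct. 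Thus the Gr\"obner fan has at least $n!$ maximal cones; since a refinement of fans with equally many maximal cones must be an equality, $N(\Newt(U_n))$ equals the Gr\"obner fan of $I_{S_n}$. Hence $\Newt(U_n)$ is a state polytope, Proposition \ref{prop: newt is permuto} gives the first assertion, and the ``in particular'' clause records exactly what the counting produced: the $n!$ distinct initial ideals $(t_it_{n+j} : (i,j) \in \Inv(\pi))$, indexed and represented by the weights $(\pi,\pi^c)$.

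The step I expect to need the most care is the dimension/lineality bookkeeping: $\Newt(U_n)$ is only $(n-1)$-dimensional inside $\RR^{2n}$, and $I_{S_n}$ is homogeneous for the full $\ZZ^{n+1}$-grading given by the columns of its defining matrix, so $N(\Newt(U_n))$ and the Gr\"obner fan share an $(n+1)$-dimensional lineality space; I would carry out the ``refinement plus equal cone count implies equality'' step after passing to the quotient by that common lineality space, where both fans become complete and pointed. A more literal alternative is to execute Algorithm \ref{alg: state pol} verbatim, obtaining the state polytope $\conv\{\sum_{(i,j) \in \Inv(\pi)}(e_i + e_{n+j}) : \pi \in \symm_n\}$; but that still requires the same counting to rule out repeated or omitted vertices and then an explicit unimodular map onto $\Pi_n$, so routing through $\Newt(U_n)$ and Proposition \ref{prop: newt is permuto} is the cleaner path.
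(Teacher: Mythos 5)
Your argument is correct and establishes the theorem, but it takes a genuinely different route from the paper's. The paper executes Algorithm~\ref{alg: state pol} directly: with $D=2$ and $U_n$ consisting of quadratic binomials, the output point associated to a weight $w$ lying in the normal cone of a vertex $v$ of $\Newt(U_n)$ is $\sum_{p\in U_n}\Newt(\init_{\prec_w}(p))$, and this sum is $v$ itself; so the algorithm returns $\Newt(U_n)$ as the state polytope, and Proposition~\ref{prop: newt is permuto} finishes. You bypass the algorithm and instead prove that $N(\Newt(U_n))$ \emph{is} the Gr\"obner fan: since $U_n$ is a universal Gr\"obner basis, the initial ideal is constant on the relative interior of each maximal cone of $N(\Newt(U_n))$, so $N(\Newt(U_n))$ refines the Gr\"obner fan; both complete fans share the same $(n+1)$-dimensional lineality (the rowspace of the defining matrix of $S_n$), and both have $n!$ maximal cones, so they coincide and $\Newt(U_n)$ is a state polytope by definition. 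Your route is somewhat more conceptual and makes explicit the counting that the paper leaves implicit when it asserts that the $n!$ output points ``are all the vertices of the state polytope''; the paper's route follows Sturmfels's algorithmic recipe more literally and is terser about why no vertices are duplicated or omitted.

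Two smaller points deserve a caution. First, your parenthetical ``more literal alternative,'' producing $\conv\{\sum_{(i,j)\in\Inv(\pi)}(e_i+e_{n+j}) : \pi\in\symm_n\}$, is not what Algorithm~\ref{alg: state pol} outputs: the degree-$2$ component $\init_{\prec_w}(I_{S_n})_2$ is the sum of \emph{all} $\binom{n}{2}$ initial-term exponent vectors coming from $U_n$, i.e.\ $\sum_{(i,j)\in\Inv(\pi)}(e_i+e_{n+j}) + \sum_{(i,j)\notin\Inv(\pi),\ i<j}(e_j+e_{n+i})$, which is precisely the vertex $(\pi,\pi^c)-(1,\dots,1)$ of $\Newt(U_n)$. (This also flags an imprecision in the ``consequently'' clause of Lemma~\ref{lem: init ideals}, which records only the inversion-indexed generators.) Second, be wary of quoting Lemma~\ref{lem: init ideals} for the pairwise distinctness of the $n!$ initial ideals: the justification given there is that the weights $(\pi,\pi^c)$ lie in distinct maximal cones of $N(\Newt(U_n))$, which is essentially the fact you are in the middle of proving is equivalent to distinctness. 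To keep your counting step non-circular, verify distinctness directly: for each pair $i<j$, the monomial generator is $t_it_{n+j}$ or $t_jt_{n+i}$ according to whether $(i,j)\in\Inv(\pi)$, and since a permutation is recoverable from its inversion set, distinct $\pi$ give distinct monomial ideals.
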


\begin{proof}
	We will use Algorithm~\ref{alg: state pol} and exploit our previous results.
	Here, we have
	\[
		D = \max\{\deg(t^a - t^b) \mid t^a - t^b \in U_n\} = 2.
	\]
	Note that $I_{S_n}$ contains no polynomials of degree $1$. 
	From Corollary~\ref{cor: vertices of state pol}, we know that 
	\[
		\Newt(U_n) = \conv\{(\pi,\pi^c) \mid \pi \in \symm_n\} -  (1,\dots,1).
	\]
	Proposition~\ref{prop: newt is permuto} tells us that $\Newt(U_n)$ is unimodularly equivalent to $\Pi_n$, so choosing $w \in N_{\Newt(U_n)}(\{v\})$ can be done by choosing $w$ in an appropriate Weyl chamber and applying a unimodular transformation.
	As a result, the point $\init_{\prec_w}(I_{S_n})_2$ is exactly $\sum \Newt(\init_{\prec_w}(t^a - t^b))$ where the sum is over all $t^a - t^b \in U_n$.
	The resulting points are $(\pi,\pi^c) - (1\dots,1)$, which have already been computed as vertices of $\Newt(U_n)$.
	So, these are all the vertices of the state polytope of $I_{S_n}$.
	Therefore, the two are unimodularly equivalent. 
\end{proof}


\subsection{For the code $P(2_n)$}

Recall that a matrix $A \in \ZZ^{n \times k}$ of rank $n$ is called \emph{unimodular} if all of its maximal minors are a $0$ or $\pm 1$.  
More strongly, $A$ is \emph{totally unimodular} if all of its minors are $0$ or $\pm 1$.
So, if $A$ has rank $n$ and is totally unimodular, then it is also unimodular.
A class of totally unimodular matrices are those with the \emph{consecutive ones property}: $0/1$ matrices such that, in each row, all $1$s appear consecutively.
The fact that such matrices are totally unimodular is a standard linear algebra exercise. 
This is what will allow us to compute the universal Gr\"obner basis of $P(2_n)$.

\begin{lemma}[{\cite[Proposition 8.11]{sturmfels}}]\label{lem: unimod}
	If $A$ is a unimodular matrix, then its Graver basis and universal Gr\"obner basis coincide. 
\end{lemma}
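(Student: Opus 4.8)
The plan is to prove the two inclusions $\mathcal{U}_A\subseteq\operatorname{Gr}(A)$ and $\operatorname{Gr}(A)\subseteq\mathcal{U}_A$, where $\mathcal{U}_A$ and $\operatorname{Gr}(A)$ denote the universal Gr\"obner basis and the Graver basis of $I_A$. The first inclusion holds for every $A$ (as recalled in Section~\ref{sec: state pols}), so the whole content is the reverse inclusion under the unimodularity hypothesis. I identify each binomial $t^{u^+}-t^{u^-}$ of $I_A$ with the vector $u=u^+-u^-\in\ker_{\ZZ}A$; then $\operatorname{Gr}(A)$ is the set of \emph{primitive} vectors (those $u$ for which no $v\in\ker_{\ZZ}A$ other than $0$ and $u$ satisfies $v^+\le u^+$ and $v^-\le u^-$ coordinatewise), while the circuits $\mathcal{C}_A$ are the primitive vectors whose support is inclusion-minimal among supports of nonzero elements of $\ker_{\ZZ}A$. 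It is standard \cite{sturmfels} that $\mathcal{C}_A\subseteq\mathcal{U}_A$, so it suffices to show that when $A$ is unimodular every primitive vector is a circuit, i.e.\ has inclusion-minimal support.

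Fix a primitive $g\in\ker_{\ZZ}A$ with support $\sigma$. The object to study is the \emph{conformal interval}
\[
	P_g=\{\,v\in\ker_{\RR}A \mid 0\le v_i\le g_i \text{ when } g_i\ge 0,\ g_i\le v_i\le 0 \text{ when } g_i<0\,\}.
\]
This is a polytope, being the intersection of the subspace $\ker_{\RR}A$ with a box whose vertices are lattice points, and three of its properties drive the argument. First, every coordinate of $g/2$ is strictly interior to its interval, so $g/2$ lies in the relative interior of $P_g$ and hence $\dim P_g=\dim(\ker_{\RR}A\cap\RR^{\sigma})$, the coordinate subspace on $\sigma$. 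Second, $v\mapsto g-v$ carries $P_g$ onto itself, so $P_g$ is centrally symmetric about $g/2$. Third, the lattice points of $P_g$ are exactly the conformal divisors of $g$, which by primitivity is just $\{0,g\}$.

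The heart of the proof is that unimodularity forces $P_g$ to be a \emph{lattice} polytope. At a vertex $v$ of $P_g$, let $F\subseteq\sigma$ be the set of coordinates with $v_i\notin\{0,g_i\}$; then the columns $a_i$ of $A$ with $i\in F$ are linearly independent, and $v_F=(v_i)_{i\in F}$ is the unique solution of $\sum_{i\in F}a_iv_i=b$, where $b\in\ZZ^n$ because the remaining coordinates of $v$ are integers and $A$ is integral. Padding $F$ with further columns of $A$ to reach a maximal square submatrix (possible since $A$ has full row rank), this submatrix has determinant $\pm1$ by unimodularity, so Cramer's rule makes $v_F$ — and hence $v$ — integral. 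Now a lattice polytope of dimension $\ge 2$ has at least three vertices, all lattice points, whereas $P_g$ has only the two lattice points $0$ and $g$; therefore $\dim P_g\le 1$. Consequently $\ker_{\RR}A\cap\RR^{\sigma}$ is a line, and since $g$ spans it and is supported on all of $\sigma$, the support $\sigma$ is minimal and $g$ is a circuit. This gives $\operatorname{Gr}(A)=\mathcal{C}_A\subseteq\mathcal{U}_A\subseteq\operatorname{Gr}(A)$, so all three sets coincide, which in particular proves the lemma.

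The main obstacle is the lattice-polytope claim: one has to identify precisely which linear subsystems cut out the vertices of $P_g$, so that the submatrix of $A$ appearing in Cramer's rule really is square, nonsingular, and maximal (this is where full row rank and the $\pm1$-determinant hypothesis are both used). One also needs the small observation that a one-dimensional $P_g$ forces $\sigma$ to be a minimal support precisely because $g$ itself witnesses the line with full support. The remaining ingredients — central symmetry, the identification of lattice points with conformal divisors, the primitivity bookkeeping, and $\mathcal{C}_A\subseteq\mathcal{U}_A$ — are routine or standard.
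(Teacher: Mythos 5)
The paper proves nothing here: Lemma~\ref{lem: unimod} is cited verbatim from Sturmfels (Proposition~8.11) and used as a black box, so there is no ``paper's proof'' to compare against. Your argument, on the other hand, is a correct and essentially self-contained proof, and it differs in route from the one Sturmfels gives. Sturmfels reduces the claim to two standard facts: (i) every $u\in\ker_{\ZZ}A$ has a circuit conformal to it, and (ii) for unimodular $A$ every circuit lies in $\{-1,0,1\}^m$ (proved via Cramer's rule applied to a full-rank extension of the circuit's support); a conformal $\{0,\pm1\}$-circuit must then divide $u$ in the conformal partial order, so primitivity forces $u$ to be that circuit. You instead examine the conformal fiber $P_g=\ker_{\RR}A\cap[0,g]$ (or $[g,0]$ coordinatewise) directly: unimodularity plus the same Cramer computation shows $P_g$ is a lattice polytope, primitivity shows its only lattice points are $0$ and $g$, central symmetry and the ``$\ge 3$ vertices in dimension $\ge 2$'' bound force $\dim P_g=1$, and hence $\supp(g)$ is support-minimal. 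Both proofs hinge on the same unimodularity-to-integrality step, but yours sidesteps both the conformal-circuit existence lemma and the $\{0,\pm1\}$ characterization of circuits, replacing them with a short polytopal counting argument. The only ingredient you import without proof is the standard chain $\mathcal{C}_A\subseteq\mathcal{U}_A\subseteq\operatorname{Gr}(A)$, which is legitimate to cite.

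One small point worth making explicit when you write this up: at a vertex $v$ of $P_g$ with ``free'' coordinate set $F$, the reason the columns $\{a_i : i\in F\}$ are linearly independent is that otherwise $\ker_{\RR}A\cap\RR^{F}$ would be positive-dimensional, and since $v_i$ is strictly interior to its box interval for each $i\in F$, one could perturb $v$ along that kernel direction while staying in $P_g$, contradicting $v$ being a vertex. This is exactly where full row rank of $A$ lets you extend $F$ to an $n$-subset indexing a nonsingular maximal square submatrix, so the $\pm 1$-determinant hypothesis applies. With that spelled out, the proof is complete.
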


To help us describe the universal Gr\"obner basis of $P(2_n)$, we will use the following notation.
If $t^u \in \CC[t_1,\dots,t_m]$, the \emph{support} of $t^u$ is $\supp(t^u) = \{i \in [m] \mid t_i \text{ divides } t^u\}$.
Also, recall that the codewords in $P(2_n)$ are the columns of a matrix $M_n$.

\begin{prop}\label{prop: UGB for P}
	Let
	\[
		V'_n = \{t_{3i+1}t_{3i+3} - t_{3i+2}t_{3n+1} \mid i = 0, \dots, n-1\}
	\]
	and
	\[
		V''_n = \{t_{3i+1}t_{3i+3}t_{3j+2} - t_{3j+1}t_{3j+3}t_{3i+2}  \mid 0 \leq i < j \leq n-1\}.
	\]
	The universal Gr\"obner basis $V_n$ for $I_{P(2_n)}$ is $V_n = V'_n \cup V''_n$.
\end{prop}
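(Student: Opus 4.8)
The plan is to leverage total unimodularity exactly as the section heading advertises. First I would observe that $M_n$ has the consecutive ones property: each of its first $2n$ rows has its $1$s confined to — and consecutive within — a single length-$3$ block coming from a copy of $A$, while its last row is all $1$s. Hence $M_n$ is totally unimodular. Next I would check that $M_n$ has full row rank $2n+1$: subtracting the column of $t_{3n+1}$ (which is $e_{2n+1}$) from the columns of $t_{3i+1}$ and $t_{3i+3}$ recovers all of $e_1,\dots,e_{2n+1}$, so the columns span $\RR^{2n+1}$. Therefore $M_n$ is unimodular. Since the $3n+1$ columns of $M_n$ are distinct and nonzero, $P(2_n)\setminus\{0\dots0\}$ is exactly the set of columns of $M_n$ and $I_{P(2_n)} = I_{M_n}$, so Lemma~\ref{lem: unimod} reduces the problem to computing the Graver basis of $I_{M_n}$ and matching it against $V'_n \cup V''_n$.

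To organize that computation I would first pin down the kernel lattice $\ker_{\ZZ} M_n$. Setting $u^{(i)} = e_{3i+1} + e_{3i+3} - e_{3i+2} - e_{3n+1}$ for $i = 0,\dots,n-1$, one checks directly that $M_n u^{(i)} = 0$; these $n$ vectors are linearly independent and $\ker_{\ZZ} M_n$ has rank $(3n+1)-(2n+1)=n$, and in fact they form a $\ZZ$-basis: given $v \in \ker_{\ZZ} M_n$, subtracting $\sum_i (-v_{3i+2})\,u^{(i)}$ annihilates the coordinates indexed $3i+2$, after which the surviving support lies among the columns $t_{3i+1}, t_{3i+3}, t_{3n+1}$, which are linearly independent, forcing the remainder to be $0$. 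Reading off coordinates, a general kernel vector $v = \sum_i c_i u^{(i)}$ satisfies $v_{3i+1} = v_{3i+3} = c_i$, $v_{3i+2} = -c_i$, and $v_{3n+1} = -\sum_i c_i$.

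With this parametrization the Graver basis becomes a purely combinatorial problem. For kernel vectors $v = \sum c_i u^{(i)}$ and $w = \sum d_i u^{(i)}$ one verifies that $w$ is conformal to $v$ (i.e. $w^+ \le v^+$ and $w^- \le v^-$, comparing positive and negative parts coordinatewise) precisely when each $d_i$ lies between $0$ and $c_i$ and $\sum_i d_i$ lies between $0$ and $\sum_i c_i$; so $v$ is in the Graver basis iff no $(d_i)$ other than $0$ and $(c_i)$ satisfies these box constraints. A case analysis on the sign pattern of $(c_i)$ and on $P = \sum_i c_i$ then finishes it: if all nonzero $c_i$ share a sign the constraint on $\sum d_i$ is automatic and primitivity forces a single $c_i = \pm 1$, so $v = \pm u^{(i)}$; if the signs are mixed and $P \neq 0$, replacing one positive $c_{i_0}$ by $c_{i_0}-1$ exhibits $v$ as a sum of two nonzero conformal kernel vectors, so such $v$ are never primitive; and if the signs are mixed and $P = 0$, primitivity forces exactly two nonzero entries, equal to $+1$ and $-1$, so $v = \pm(u^{(i)} - u^{(j)})$ with $i<j$, because any larger support or any entry of absolute value $\ge 2$ admits the conformal summand $e_{i_0} - e_{i_1}$ with $c_{i_0} > 0 > c_{i_1}$. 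Finally I would pass from $\pm u^{(i)}$ and $\pm(u^{(i)} - u^{(j)})$ to their binomials $t^{v^+} - t^{v^-}$, which are exactly the elements of $V'_n$ and $V''_n$.

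I expect the only genuine work to lie in the last paragraph: making the sign/$P$ case split exhaustive, and in each non-primitive case confirming that the candidate conformal summand one writes down is distinct from both $0$ and $(c_i)$ — the mixed-sign $P=0$ case being the fiddliest. The total unimodularity, full-rank, and kernel-basis steps should be routine, and the only other bookkeeping subtlety is handling a primitive vector together with its negative so that each binomial of $V_n$ is recorded once with the sign convention used in the statement.
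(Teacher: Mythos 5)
Your proof is correct, and it takes a genuinely different route from the paper's in the one place that actually requires work. Both proofs open identically (consecutive-ones property $\Rightarrow$ total unimodularity, full row rank, reduce via Lemma~\ref{lem: unimod} to the Graver basis), but then they diverge. The paper argues directly about binomials $t^u - t^v \in I_{P(2_n)}$, splitting on $\deg t^u \in \{2, 3\}$ and then handling $\deg t^u \geq 4$ by support analysis plus a pigeonhole argument to exhibit a smaller binomial dividing both sides. You instead parametrize $\ker_{\ZZ} M_n$ by the explicit $\ZZ$-basis $u^{(i)} = e_{3i+1}+e_{3i+3}-e_{3i+2}-e_{3n+1}$, observe that a kernel vector $v = \sum_i c_i u^{(i)}$ has its coordinates determined by the $n$-tuple $(c_i)$ (with $v_{3i+1}=v_{3i+3}=c_i$, $v_{3i+2}=-c_i$, $v_{3n+1}=-\sum c_i$), and then show that conformality of $w = \sum d_i u^{(i)}$ to $v$ is equivalent to the box constraints ``$d_i$ between $0$ and $c_i$'' together with ``$\sum d_i$ between $0$ and $\sum c_i$''. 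Primitivity then collapses to a clean sign-pattern case analysis on $(c_i)$ and $P = \sum c_i$: all-same-sign forces a single $\pm 1$ (giving $V'_n$), mixed signs with $P\neq 0$ is never primitive, and mixed signs with $P=0$ forces a single $+1$ and a single $-1$ (giving $V''_n$). This is more systematic than the paper's version: it never needs to bound degrees a priori, it replaces the pigeonhole step with a transparent conformal-decomposition $v = w + (v - w)$ (noting $v-w$ is automatically conformal to $v$), and it makes the enumeration of Graver elements a finite combinatorial check on $n$-tuples rather than on monomials in $3n+1$ variables. What the paper's approach buys is directness for small degrees, at the cost of a more ad hoc finish. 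Your argument is complete; the one thing worth spelling out in a final write-up is that in the $P\neq 0$ case the exhibited $(d_i)$ really is nonzero and different from $(c_i)$, which you note is forced by the mixed-sign hypothesis.
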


\begin{proof}
	By construction, $M_n$ has the consecutive ones property, so $M_n$ is totally unimodular.
	Furthermore, the transpose of $M_n$ clearly has rank $2n+1$, so $M_n$ also has rank $2n+1$.
	Therefore, $M_n$ is unimodular, and by Lemma~\ref{lem: unimod}, we only need to find the Graver basis of $I_{P(2_n)}$.
	It is straightforward to check that each binomial in $V_n$ is primitive and in the toric ideal, so these are each in the Graver basis.
	All that remains is to show that there are no other primitive binomials in $I_{P(2_n)}$.
	
	Let $t^u - t^v$ be a primitive binomial in $I_{P(2_n)}$. 
	If $\deg t^u = 2$, then 
	\[
		\supp(t^u) \in \{3i+1,3i+2,3i+3,3n+1\}
	\]
	for some $i = 0,\dots, n-1$ since, if this were not the case, then $t^u = t^v$.
	By checking cases, it follows that $\pm(t^u - t^v) \in V'_n \subseteq V_n$. 
	An analogous argument also shows that if $\deg t^u = 3$, then $\pm(t^u - t^v) \in V''_n \subseteq V_n$.
	
	Suppose $\deg t^u \geq 4$. 
	It must first be true that $|\supp(t^u)| > 1$; if $|\supp(t^u)| = 2$, then $\supp(t^u) \subseteq \{3i+1,3i+2,3i+3,3n+1\}$, and $\supp(t^v) = \{3i+1,3i+2,3i+3,3n+1\} \setminus \supp(t^u)$.
	So, $t^u$ is divisible by some $t_{\alpha}t_{\beta}$ and $t^v$ is divisible by some $t_{\gamma}t_{\delta}$ for which $t_\alpha t_\beta - t_\gamma t_\delta \in V_n$.
	This contradicts $t^u-t^v$ being primitive.
	
	If $|\supp(t^u)| > 2$, then there are two possibilities, at least one of which must occur via the pigeonhole principle: for the first possibility, there may exist monomials $t_\alpha t_\beta$ and $t_\gamma t_\delta$ as in the previous paragraph, which again implies $t^u - t^v$ is not primitive.
	For the second possibility, there may exist a (squarefree) monomial $t_{\alpha}t_{\beta}t_{\gamma}$ dividing $t^u$ for which $\{\alpha,\beta,\gamma\} = \{3i+1,3i+3,3j+2\}$ for some $i \neq j$.
	If this happens, and if $t^u - t^v$ does not satisfy the first case, then there is a monomial $t_at_bt_c$ dividing $t^v$ such that $\{a,b,c\} = \{3i+2,3j+1,3j+3\}$ for the same choice of $i,j$.
	Once again, this implies $t^u - t^v$ is not primitive.
	Therefore, $t^u - t^v$ must have degree at most $3$, in which case $\pm(t^u - t^v) \in V_n$, and $V_n$ is the universal Gr\"obner basis for $I_{P(2_n)}$.
\end{proof}

Although we could perform Algorithm~\ref{alg: state pol} as in the previous section, it would be more difficult to do since the universal Gr\"obner basis now contains cubic binomials.
However, all is not lost.

Let $A$ be a set of $m$ generators for $\NN A \subseteq \ZZ^n$.
A vector $b \in \NN A$ is a \emph{Gr\"obner degree} if there is some binomial $t^u - t^v$ in the universal Gr\"obner basis of $I_A$ such that $\pi(t^u) = \pi(t^v) = x^b$. 
If $b$ is a Gr\"obner degree, then the polytope
\[
	\conv\{a \in \RR^m \mid t^a \in \pi^{-1}(x^b)\}
\]
is called a \emph{Gr\"obner fiber}. 
A crucial result using Gr\"obner fibers that will simplify the work we need to do is the following.

\begin{thm}[{\cite[Theorem 7.15]{sturmfels}}]
	The Minkowski sum of all Gr\"obner fibers is a state polytope for $I_A$.
\end{thm}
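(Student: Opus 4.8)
\section*{Proof proposal}

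The plan is to prove the statement by showing that the normal fan of the Minkowski sum of all Gr\"obner fibers equals the Gr\"obner fan of $I_A$; in view of the definitions recorded above together with Theorem~\ref{thm: state polytope}, this is precisely what it means for that Minkowski sum to be a state polytope. Two ingredients are needed: a purely polytopal fact about Minkowski sums, and a dictionary between weight vectors, faces of Gr\"obner fibers, and initial ideals.

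First I would invoke the standard fact that the normal fan of a Minkowski sum is the common refinement of the normal fans of the summands, $N\!\left(\sum_{b} F_b\right) = \bigwedge_{b} N(F_b)$, where $b$ ranges over the Gr\"obner degrees of $I_A$. Since the universal Gr\"obner basis is finite there are only finitely many Gr\"obner degrees, so $\sum_b F_b$ is an honest polytope, and the problem reduces to identifying $\bigwedge_b N(F_b)$ with the Gr\"obner fan of $I_A$.

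The core step is the dictionary. Fix a generic $w \in \RR^m$. Because the universal Gr\"obner basis $\mathcal{U}$ of $I_A$ is a Gr\"obner basis with respect to every weight order, $\init_{\prec_w}(I_A) = (\init_{\prec_w}(g) \mid g \in \mathcal{U})$. For a binomial $g = t^u - t^v \in \mathcal{U}$ with common image $\pi(t^u) = \pi(t^v) = x^b$, the exponents $u$ and $v$ are vertices of the Gr\"obner fiber $F_b = \conv\{a \mid t^a \in \pi^{-1}(x^b)\}$, and $\init_{\prec_w}(g)$ is exactly whichever of $t^u, t^v$ has exponent maximizing $w$ on $F_b$; equivalently, the standard monomial of multidegree $b$ for $\init_{\prec_w}(I_A)$ is the $w$-minimal vertex of $F_b$. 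Hence two generic weight vectors $w, w'$ yield the same monomial initial ideal if and only if, for every Gr\"obner degree $b$, they select the same face of $F_b$ --- that is, if and only if they lie in a common cone of every $N(F_b)$, hence in a common maximal cone of $\bigwedge_b N(F_b)$. This puts the maximal cones of $\bigwedge_b N(F_b)$ in bijection with the equivalence classes of weight vectors defining a common initial ideal, i.e.\ with the maximal cones of the Gr\"obner fan. A routine analysis of $\init_{\prec_w}$ for non-generic $w$ --- tracking which (now possibly positive-dimensional) face of each $F_b$ is selected --- promotes this to equality of the fans rather than merely of their maximal cells; both fans have the same lineality space, reflecting the $A$-grading of $I_A$.

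The step I expect to be the main obstacle is the precise verification that $\init_{\prec_w}(I_A)$ is recovered \emph{exactly} from the $w$-selected vertices of the Gr\"obner fibers: one must show that the universal Gr\"obner basis detects every combinatorial transition of the initial ideal, so that neither are two genuinely distinct initial ideals conflated by the fibers nor is a spurious wall introduced by one of them. Relatedly, obtaining equality of fans (and not just of their supports or top-dimensional cells) requires matching the containment relations $\init_{\prec_w}(I_A) \subseteq \init_{\prec_{w'}}(I_A)$ with closure relations among cones in each $N(F_b)$, which is where careful bookkeeping for non-generic weight vectors is required.
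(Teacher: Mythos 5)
The proposal correctly identifies the two main ingredients of Sturmfels' proof: (a) the normal fan of a Minkowski sum is the common refinement of the normal fans of the summands, and (b) a dictionary between weight vectors, initial ideals, and face selections on Gr\"obner fibers. This is the right skeleton, and the key observation that the unique standard monomial of a graded piece is the $w$-minimal lattice point of that fiber is correct and central.

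However, the ``Hence'' that turns the standard-monomial observation into the claim ``$\init_{\prec_w}(I_A) = \init_{\prec_{w'}}(I_A)$ if and only if $w,w'$ lie in a common cone of every $N(F_b)$'' is a non sequitur, and in fact false under the conventions in force in this paper. The normal cone $N_{F_b}(\cdot)$ picks out the $w$-\emph{maximal} face of $F_b$, while your dictionary is phrased in terms of the $w$-\emph{minimal} vertex (the standard monomial). These are different vertices of $F_b$ whenever $F_b$ is not a segment, and they are not interchangeable. Earlier in the same paragraph you also write that $\init_{\prec_w}(g)$ ``is exactly whichever of $t^u, t^v$ has exponent maximizing $w$ on $F_b$, equivalently, the standard monomial \dots is the $w$-minimal vertex of $F_b$'' --- but the first clause compares only the two exponents $u,v$ appearing in $g$, whereas the second concerns the whole fiber; they are not equivalent when $F_b$ has lattice points beyond $\{u,v\}$, and conflating them is exactly where the argument slips.

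The gap is substantive, not cosmetic. Take $I_{P(2_2)}$: the cubic Gr\"obner degree has fiber a \emph{triangle} with vertices $A = e_1+e_3+e_5$, $B = e_2+e_4+e_6$, $C = e_2+e_5+e_7$, only the edge $AB$ coming from a UGB element. In the cone where $w_1{+}w_3 > w_2{+}w_7$ and $w_4{+}w_6 > w_5{+}w_7$, the $w$-minimal vertex of the triangle is always $C$, so the initial ideal is constant there --- it is a single Gr\"obner cone --- yet the $w$-\emph{maximal} vertex toggles between $A$ and $B$ across the wall $w\cdot A = w\cdot B$, so $\bigwedge_b N(F_b)$ subdivides this region. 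Conversely, in the cone where both inequalities flip, the $w$-maximal vertex is always $C$, so $\bigwedge_b N(F_b)$ does \emph{not} subdivide, yet the $w$-minimal vertex toggles between $A$ and $B$, giving two distinct initial ideals. So $N(\sum_b F_b)$ and the Gr\"obner fan are genuinely different fans (they are negatives of one another). To make the argument correct as written, the common refinement must be taken of the $N(-F_b)$, i.e.\ the fan of $-\sum_b F_b$ (equivalently one must phrase the dictionary in terms of $\face_{-w}$ or adopt the opposite convention that initial terms are $w$-minimal, which is what Sturmfels does). Beyond this sign/convention issue, you are right that extending from generic $w$ to arbitrary $w$ (equality of fans, not merely of maximal cells) requires additional care, and you correctly flag that the crux is showing the UGB detects every combinatorial transition; but the min/max conflation must be resolved first, since as stated the proposed proof proves a false statement about $\sum_b F_b$.
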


\begin{thm}
	Let $n \geq 1$, and set 
	\[\begin{aligned}
		Q_n =& \sum_{i = 0}^{n-1} \conv\{e_{3i+1}+e_{3i+3}, e_{3i+2}+e_{3n+1}\} \\
		&+ \sum_{0 \leq i < j \leq n-1} \conv\{e_{3i+1}+e_{3i+3}+e_{3j+2}, e_{3i+2} + e_{3j+1}+e_{3j+3}, e_{3i+2}+e_{3j+2}+e_{3n+1}\}.
	\end{aligned}\]
	Then $Q_n$ is a state polytope for $P(2_n)$.
\end{thm}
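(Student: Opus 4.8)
The plan is to apply \cite[Theorem~7.15]{sturmfels}, which guarantees that the Minkowski sum of all Gr\"obner fibers of $I_{P(2_n)}$ is a state polytope. Since Proposition~\ref{prop: UGB for P} already supplies the universal Gr\"obner basis $V_n = V'_n \cup V''_n$ explicitly, the Gr\"obner degrees of $I_{P(2_n)}$ are exactly the vectors $b$ arising from the binomials in $V_n$, and the problem reduces to three steps: (i) list these Gr\"obner degrees, (ii) compute the Gr\"obner fiber sitting over each, and (iii) check that the resulting Minkowski sum is precisely $Q_n$.

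First I would record the columns of $M_n$ concretely. Writing $e_1,\dots,e_{2n+1}$ for the standard basis of $\RR^{2n+1}$, the block structure of $\alpha(A \oplus \cdots \oplus A)$ gives, for $0 \le i \le n-1$, that column $3i+1$ is $e_{2i+1}+e_{2n+1}$, column $3i+2$ is $e_{2i+1}+e_{2i+2}+e_{2n+1}$, column $3i+3$ is $e_{2i+2}+e_{2n+1}$, and column $3n+1$ is $e_{2n+1}$. With this description, each binomial $t_{3i+1}t_{3i+3} - t_{3i+2}t_{3n+1}$ in $V'_n$ has Gr\"obner degree $b_i := e_{2i+1}+e_{2i+2}+2e_{2n+1}$, and each binomial $t_{3i+1}t_{3i+3}t_{3j+2} - t_{3j+1}t_{3j+3}t_{3i+2}$ in $V''_n$ has Gr\"obner degree $b_{i,j} := e_{2i+1}+e_{2i+2}+e_{2j+1}+e_{2j+2}+3e_{2n+1}$. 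These are pairwise distinct as $i$ and $(i,j)$ vary: the coefficient of $e_{2n+1}$ separates the two families, and within each family the index is recovered from the support in the remaining coordinates. Hence the set of Gr\"obner degrees is exactly $\{b_i \mid 0 \le i \le n-1\} \cup \{b_{i,j} \mid 0 \le i < j \le n-1\}$.

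Next I would identify each Gr\"obner fiber by enumerating $\pi^{-1}(x^b)$. By Lemma~\ref{lem: homog toric} applied with $w = e_{2n+1}$, every monomial mapping to $x^{b_i}$ has degree $2$ and every monomial mapping to $x^{b_{i,j}}$ has degree $3$, so the fibers consist of degree-$2$ and degree-$3$ monomials respectively. Because every column of $M_n$ contributes a single $1$ in row $2n+1$ and is otherwise supported inside one ``block'' $\{e_{2k+1},e_{2k+2}\}$ (or is zero outside row $2n+1$, in the case of column $3n+1$), expressing a given $b$ as a sum of columns amounts to covering each needed block either by the single column $3k+2$ or by the pair $\{3k+1,3k+3\}$, plus possibly the column $3n+1$ as filler. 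A short case analysis then yields $\pi^{-1}(x^{b_i}) = \{t_{3i+1}t_{3i+3},\ t_{3i+2}t_{3n+1}\}$ and $\pi^{-1}(x^{b_{i,j}}) = \{t_{3i+1}t_{3i+3}t_{3j+2},\ t_{3j+1}t_{3j+3}t_{3i+2},\ t_{3i+2}t_{3j+2}t_{3n+1}\}$, the only other conceivable term (using the pair for both blocks) having degree $4$. Therefore the Gr\"obner fiber over $b_i$ is the segment $\conv\{e_{3i+1}+e_{3i+3},\ e_{3i+2}+e_{3n+1}\}$ and the fiber over $b_{i,j}$ is the triangle $\conv\{e_{3i+1}+e_{3i+3}+e_{3j+2},\ e_{3i+2}+e_{3j+1}+e_{3j+3},\ e_{3i+2}+e_{3j+2}+e_{3n+1}\}$. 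Summing these over all Gr\"obner degrees is literally the definition of $Q_n$, which finishes the argument.

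I expect the only genuine work to lie in step (iii)'s enumeration of $\pi^{-1}(x^b)$ — in particular confirming that no further monomials belong to any fiber — but this is a direct consequence of the block structure of $M_n$ and the consecutive-ones property already used in Proposition~\ref{prop: UGB for P}, so it should be routine rather than delicate. One minor point to verify in passing is that the three vertices of each triangular fiber are affinely independent, so that the corresponding Minkowski summand is genuinely two-dimensional; this holds because the two edge vectors emanating from $e_{3i+2}+e_{3j+2}+e_{3n+1}$ involve disjoint index blocks apart from the shared coordinate $3n+1$.
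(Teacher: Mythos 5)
Your proposal is correct and follows essentially the same route as the paper: both invoke Theorem~7.15 of Sturmfels, read off the Gr\"obner degrees from the universal Gr\"obner basis $V_n$ supplied by Proposition~\ref{prop: UGB for P}, compute each Gr\"obner fiber by the block-structure observation that columns $3i+1,3i+2,3i+3$ are the only ones supported on coordinates $2i+1,2i+2$, and match the resulting segments and triangles term-by-term with the Minkowski summands of $Q_n$. Your version is slightly more explicit in using homogeneity with $w = e_{2n+1}$ to bound the degree of preimage monomials, but the argument is otherwise the same.
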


\begin{proof}
	Let $t_{3i+1}t_{3i+3} - t_{3i+2}t_{3n+1}$ be an element of $V_n$ for some $i = 0,\dots,n-1$.
	Then $\pi(t_{3i+1}t_{3i+3}) = x_{2i+1}x_{2i+2}x_{2n+1}^2$. 
	Note that the only columns of $M_n$ having any nonzero entries in either of the coordinates $2i+1$ or $2i+2$ are columns $3i+1,3i+2,$ and $3i+3$.
	Thus, $\pi^{-1}(x_{2i+1}x_{2i+2}x_{2n+1}^2)$ must consist of quadratic monomials involving only the variables $t_{3i+1},t_{3i+3}, t_{3i+2},$ and $t_{3n+1}$.
	It is then clear that there are only two possibilities, so that
	\[
		\pi^{-1}(x_{2i+1}x_{2i+2}x_{2n+1}^2) = \{t_{3i+1}t_{3i+3}, t_{3i+2}t_{3n+1}\}.
	\]
	Therefore, the corresponding Gr\"obner fiber is the line segment $\conv\{e_{3i+1} + e_{3i+3}, e_{3i+2}+e_{3n+1}\}$.
	
	Now let $g = t_{3i+1}t_{3i+3}t_{3j+2} - t_{3j+1}t_{3j+3}t_{3i+2}$ be an element of $V_n$ for some choice of $i < j$. 
	This time,
	\[
		\pi(t_{3i+1}t_{3i+3}t_{3j+2}) = x_{2i+1}x_{2i+2}x_{2j+1}x_{2j+2}.
	\]
	There are six columns of $M_n$ which have nonzero entries in coordinates $2i+1,2i+2,2j+1$, and $2j+2$, which are exactly the columns corresponding to indices of variables in $g$.
	Our task, then, is to determine all cubics in those variables, along with $t_{3n+1}$, whose image is $x_{2i+1}x_{2i+2}x_{2j+1}x_{2j+2}$.
	Computing the preimage of the right hand side we find
	\[
		\pi^{-1}(x_{2i+1}x_{2i+2}x_{2j+1}x_{2j+2}) = \{t_{3i+1}t_{3i+3}t_{3j+2}, t_{3j+1}t_{3j+3}t_{3i+2}, t_{3i+2}t_{3j+2}t_{3n+1}\}.
	\]
	Therefore, the corresponding Gr\"obner fiber is the triangle
	\[
		\conv\{e_{3i+1}+e_{3i+3}+e_{3j+2}, e_{3j+1}+e_{3j+3}+e_{3i+2}, e_{3i+2}+e_{3j+2}+e_{3n+1}\}.
	\]
\end{proof}

Recall that if $n$ is a positive integer, then $\binom{[n]}{k}$ denotes the collection of $k$-subsets of $[n]$.
For ease of notation, we will henceforth let
\[
	\calI_n = \left\{S \cup \{n+1\} \mid S \in \binom{[n]}{1} \cup \binom{[n]}{2}\right\}.
\]
Further, if $S \subseteq [n]$, let $\Delta_S = \conv\{e_i \in \RR^n \mid i \in S\}$.

\begin{cor}\label{cor: minkowski sum}
	The state polytope for $P(2_n)$ is unimodularly equivalent to
	\[
		\overline Q_n = \sum_{S \in \calI_n} \Delta_S.
	\]
\end{cor}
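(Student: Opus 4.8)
By the preceding theorem, $Q_n$ is a state polytope for $P(2_n)$, so the plan is to show directly that $Q_n$ is unimodularly equivalent to $\overline Q_n$. I would exploit the fact that $Q_n$ comes to us already presented as a Minkowski sum: the goal is to produce a single integer change of basis $M \in \GL_{3n+1}(\ZZ)$ together with one lattice translation that carries each summand of $Q_n$ onto one of the coordinate simplices $\Delta_S$, $S \in \calI_n$.

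First I would replace the awkward vertex vectors appearing in $Q_n$ by cleaner generators. Set $v_i = e_{3i+1} - e_{3i+2} + e_{3i+3}$ for $0 \le i \le n-1$ and $v_n = e_{3n+1}$. A short computation with the explicit vertices shows that the line-segment summand of $Q_n$ indexed by $i$ equals $\conv\{v_i,v_n\} + e_{3i+2}$ and that the triangular summand indexed by $(i,j)$ equals $\conv\{v_i,v_j,v_n\} + e_{3i+2} + e_{3j+2}$; the only identities needed are $e_{3i+1}+e_{3i+3} = v_i + e_{3i+2}$ and $e_{3i+2}+e_{3n+1} = v_n + e_{3i+2}$, applied vertex by vertex. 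Pulling the translation vectors out of the Minkowski sum then gives
\[
	Q_n = w + \sum_{i=0}^{n-1}\conv\{v_i,v_n\} + \sum_{0 \le i < j \le n-1}\conv\{v_i,v_j,v_n\},
\]
where $w = \sum_{i=0}^{n-1} e_{3i+2} + \sum_{0 \le i < j \le n-1}(e_{3i+2}+e_{3j+2})$ is a lattice point.

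Next I would check that the list $v_0,\dots,v_n$ together with the vectors $e_{3i+2}, e_{3i+3}$ for $0 \le i \le n-1$ forms a $\ZZ$-basis of $\ZZ^{3n+1}$. Grouping coordinates into the blocks $\{3i+1,3i+2,3i+3\}$ and $\{3n+1\}$, the matrix having these $3n+1$ vectors as columns is, after reordering, block diagonal with $n$ copies of $\begin{psmallmatrix} 1 & 0 & 0\\ -1 & 1 & 0\\ 1 & 0 & 1\end{psmallmatrix}$ and a single $1 \times 1$ block, each of determinant $1$; hence the list is an integral basis and its change-of-basis matrix lies in $\GL_{3n+1}(\ZZ)$. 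Let $M \in \GL_{3n+1}(\ZZ)$ be the map with $M v_k = e_{k+1}$ for $0 \le k \le n$, sending the adjoined vectors bijectively to $e_{n+2},\dots,e_{3n+1}$. Applying $M$ to $Q_n - w$ turns each $\conv\{v_i,v_n\}$ into $\conv\{e_{i+1},e_{n+1}\} = \Delta_{\{i+1,n+1\}}$ and each $\conv\{v_i,v_j,v_n\}$ into $\conv\{e_{i+1},e_{j+1},e_{n+1}\} = \Delta_{\{i+1,j+1,n+1\}}$. As $i$ runs over $0,\dots,n-1$ and $(i,j)$ over pairs with $i<j$, the index sets $\{i+1,n+1\}$ and $\{i+1,j+1,n+1\}$ run exactly over $\calI_n$, so $M(Q_n - w) = \sum_{S \in \calI_n}\Delta_S = \overline Q_n$, now regarded inside $\RR^{3n+1}$ as $\RR^{n+1}\times\{0_{2n}\}$. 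By definition this is a unimodular equivalence between $Q_n$ and $\overline Q_n$, which is exactly what the corollary asserts.

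The main obstacle is precisely the $\ZZ$-basis claim: that is the step where ``unimodularly equivalent'' is genuinely stronger than ``affinely equivalent,'' since one must know that $\{v_0,\dots,v_n\}$ extends to an \emph{integral} basis of $\ZZ^{3n+1}$, not merely a rational one. Everything else — the per-summand translation identities and the reassembly of the Minkowski sum under the linear map — is mechanical once the vectors $v_i$ are chosen, and the block-triangular form exhibited above makes even the integrality check routine; one should just be careful to keep the row/column orderings straight and to confirm that $w$ is a lattice point, which it manifestly is.
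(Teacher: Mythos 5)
Your proposal is correct, and the computations check out — the identities $e_{3i+1}+e_{3i+3}=v_i+e_{3i+2}$ and $e_{3i+2}+e_{3n+1}=v_n+e_{3i+2}$ do let you rewrite each Gr\"obner fiber as a translate of a simplex on $\{v_0,\dots,v_n\}$, the block-triangular matrix argument does show that $v_0,\dots,v_n,\{e_{3i+2},e_{3i+3}\}_{i}$ is a $\ZZ$-basis, and $M(Q_n-w)$ does land exactly on $\sum_{S\in\calI_n}\Delta_S$ inside $\RR^{n+1}\times\{0_{2n}\}$. The route is genuinely different from the paper's, though it ends up exploiting the same redundancy. The paper works globally: it observes that every summand (hence $Q_n$ itself) satisfies the integral affine relations $x_{3i+3}=x_{3i+1}$ and $x_{3i+1}+x_{3i+2}=n$, and then projects $Q_n$ onto the coordinates $3i+1$ ($i=0,\dots,n$), with the unimodularity of the projection implicit in the fact that the eliminated coordinates are integral affine functions of the retained ones. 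You work summand-by-summand: you re-express each Gr\"obner fiber in a cleaner $\ZZ$-basis and then apply a single, explicitly constructed element of $\GL_{3n+1}(\ZZ)$ composed with a lattice translation. Your approach makes the unimodularity verification concrete (the block determinant computation), at the cost of introducing the auxiliary vectors $v_i$ and the translation $w$; the paper's approach is shorter but leaves the reader to supply why the coordinate projection is lattice-preserving. Both establish the corollary correctly.
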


\begin{proof}
	Each vertex of $Q_n$ is the sum of vertices of the Minkowski summands.
	Note that each summand satisfies the equation
	\[
		x_{3i+1} - x_{3i+3} = 0
	\]
	for all $i=0,\dots,n-1$.
	
	Next notice that for each Minkowski summand of $Q_n$, whenever one vertex has a nonzero entry in coordinate $3i+1$, all other vertices are zero in coordinate $3i+1$ and contain a one in coordinate $3i+2$.
	Since there are $n$ summands for which one of the vertices has a one in coordinate $3i+1$, we know $Q_n$ satisfies the equations
	\[
		x_{3i+1} + x_{3i+2} = n
	\]
	for all $i=0,\dots,n-1$.
	
	These observations allow us to project $Q_n$ onto coordinates $3i+1$ for $i=0,\dots,n$ to obtain a unimodularly equivalent polytope, $\overline Q_n$.
	Formally, we have shown $Q_n$ is unimodularly equivalent to
	\[
		\overline Q_n = \sum_{i = 0}^{n-1} \conv\{e_{3i+1}, e_{3n+1}\} + \sum_{0 \leq i < j \leq n-1} \conv\{e_{3i+1},e_{3j+2},e_{3n+1}\},
	\]
	which is identical to the sum claimed with only a change in notation.
\end{proof}

We can say more about $\overline Q_n$ than just this implicit expression.
Indeed, its vertices can be explicitly computed, and these are what allow us to identify the number of initial ideals of $I_{P(2_n)}$.
First, we prove a lemma to help us along the way. 
To simply notation, if $S \in \calI_n$, let $H^{\geq}_S$ denote the closed halfspace
\[
	H^{\geq}_S = \left\{(x_1,\dots,x_n) \in \RR^n \mid \sum_{i \in S} x_i \geq \binom{|S|}{2}\right\}.
\]
Define the open halfspace $H^{>}_S$ and the hyperplane $H^{=}_S$ analogously.

\begin{lemma}\label{lem: Qn hyperplanes}
	For all $n$, $\overline Q_n$ is the intersection
	\[
		\overline Q_n = H_{[n]}^= \cap \left(\bigcap_{S \in \calI_n} H^{\geq}_S\right).
	\]
\end{lemma}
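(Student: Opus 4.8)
The plan is to establish the two inclusions $\overline{Q}_n \subseteq H^{=}_{[n]} \cap \bigcap_{S \in \calI_n} H^{\geq}_S$ and $\supseteq$ separately, taking as given the Minkowski-sum presentation $\overline{Q}_n = \sum_{S \in \calI_n} \Delta_S$ from Corollary~\ref{cor: minkowski sum}. The forward inclusion is a direct support-function computation; the reverse inclusion --- the statement that these halfspaces already cut out $\overline{Q}_n$ --- is where the real work lies, and I would obtain it from the feasibility criterion for a transportation problem, equivalently from the standard submodular description of a Minkowski sum of coordinate simplices (a generalized permutohedron).

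For $\overline{Q}_n \subseteq H^{=}_{[n]} \cap \bigcap_{S} H^{\geq}_S$: every $x \in \overline{Q}_n$ can be written $x = \sum_{T \in \calI_n} x^{(T)}$ with $x^{(T)} \in \Delta_T$, that is, $x^{(T)} \geq 0$, $\supp(x^{(T)}) \subseteq T$, and $\sum_i x^{(T)}_i = 1$. Summing the last identity over $T$ shows the coordinates of $x$ sum to $|\calI_n| = n + \binom{n}{2} = \binom{n+1}{2}$, so $x$ lies on the hyperplane $H^{=}_{[n]}$. Fixing $S \in \calI_n$, for each $T \in \calI_n$ we have $\sum_{i \in S} x^{(T)}_i = \sum_{i \in S \cap T} x^{(T)}_i \geq 0$, and this equals $1$ whenever $T \subseteq S$; summing over $T$ gives $\sum_{i \in S} x_i \geq \#\{T \in \calI_n : T \subseteq S\}$. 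Since every member of $\calI_n$ contains $n+1$, the members of $\calI_n$ lying inside a fixed $S$ with $n+1 \in S$ are exactly the two- and three-element subsets of $S$ through $n+1$, and there are $(|S|-1) + \binom{|S|-1}{2} = \binom{|S|}{2}$ of them; hence $\sum_{i \in S} x_i \geq \binom{|S|}{2}$, i.e. $x \in H^{\geq}_S$. Nonnegativity of the coordinates of $x$ is automatic and, where needed, is recorded among the $H^{\geq}_S$ by the singleton sets, for which $\binom{1}{2} = 0$.

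For the reverse inclusion, fix $x$ satisfying all the listed constraints; I would produce a decomposition $x = \sum_{T \in \calI_n} x^{(T)}$ with $x^{(T)} \in \Delta_T$. Such a decomposition exists precisely when the nonnegative transportation problem with one unit of supply at each $T \in \calI_n$, demand $x_k$ at each coordinate $k \in [n+1]$, and an admissible transfer from $T$ to $k$ exactly when $k \in T$, is solvable. By the Gale--Ryser / max-flow--min-cut feasibility criterion --- equivalently, the submodular inequality description of a Minkowski sum of simplices --- this problem is solvable if and only if $x_k \geq 0$ for all $k$, $\sum_k x_k = |\calI_n|$, and $\sum_{k \in A} x_k \leq \#\{T \in \calI_n : T \cap A \neq \emptyset\}$ for every $A \subseteq [n+1]$. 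The first two conditions hold by hypothesis. For the cut condition, pass to complements: since $\#\{T : T \cap A \neq \emptyset\} = |\calI_n| - \#\{T : T \subseteq A^c\}$ and $\sum_{k \in A} x_k = |\calI_n| - \sum_{k \in A^c} x_k$, the inequality for $A$ is equivalent to $\sum_{k \in A^c} x_k \geq \#\{T \in \calI_n : T \subseteq A^c\}$. By the count above, the right-hand side equals $\binom{|A^c|}{2}$ when $n+1 \in A^c$ --- so this is one of the hypothesized $H^{\geq}_{A^c}$ --- and equals $0$ when $n+1 \notin A^c$, in which case it follows from nonnegativity of $x$. Hence $x$ decomposes, so $x \in \overline{Q}_n$.

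The main obstacle is this reverse inclusion: one must invoke the transportation/submodularity machinery cleanly and, above all, verify that the cut constants line up with the binomial thresholds appearing in the $H^{\geq}_S$. The identity $\#\{T \in \calI_n : T \subseteq S\} = \binom{|S|}{2}$ for $S$ with $n+1 \in S$ is the linchpin that makes this matching work; everything else is bookkeeping with complements. If a self-contained argument is preferred over citing a feasibility theorem, two alternatives reduce to the same identity: induct on $n$ by peeling off the pair of curves indexed by $n$, or argue directly that every vertex of the right-hand polytope arises by greedily assigning a vertex $e_{j(T)} \in \Delta_T$ to each summand $T$ while scanning the coordinates in decreasing order of value.
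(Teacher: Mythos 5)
Your proof takes a genuinely different route from the paper's: the paper simply cites Postnikov's submodular $H$-description of a Minkowski sum of coordinate simplices (Proposition~6.3 of the referenced Postnikov paper) and then observes which halfspaces are redundant, whereas you re-derive the description from scratch via a transportation/max-flow feasibility criterion. That is a nice, self-contained alternative, but the reverse inclusion as you have written it has a real gap.

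After complementing, your cut condition demands $\sum_{k \in B} x_k \geq \#\{T \in \calI_n : T \subseteq B\}$ for \emph{every} $B \subseteq [n+1]$, and you correctly compute the right-hand side to be $\binom{|B|}{2}$ when $n+1 \in B$ and $0$ otherwise. You then assert that the inequality for $n+1 \in B$ ``is one of the hypothesized $H^{\geq}_{B}$,'' and that the case $n+1 \notin B$ ``follows from nonnegativity of $x$,'' which you earlier claimed is ``recorded among the $H^{\geq}_S$ by the singleton sets.'' Neither assertion holds under the stated hypotheses: $\calI_n$ consists only of the $2$- and $3$-element subsets of $[n+1]$ through $n+1$, so for $|B| \geq 4$ the halfspace $H^{\geq}_B$ is \emph{not} among the hypotheses, and $\calI_n$ contains no singletons at all, so the coordinate inequalities $x_k \geq 0$ are not hypothesized either. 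These missing inequalities are not consequences of the ones listed. For instance, with $n = 4$ the point $x = (1,1,1,5,2)$ has $\sum_i x_i = 10 = \binom{5}{2}$ and satisfies every $H^{\geq}_S$ with $S \in \calI_4$, yet $x_4 = 5$ while only four summands $\Delta_T$ of $\overline Q_4$ have $4 \in T$, forcing $x_4 \leq 4$ for any point of $\overline Q_4$; so $x$ lies in the stated right-hand side but not in $\overline Q_4$. This means the lemma itself is misstated --- the intersection should run over the building closure $\widehat\calI_n$ (singletons of $[n]$ together with all subsets of $[n+1]$ containing $n+1$), exactly the halfspaces that the proof of Theorem~\ref{thm: Qn simple} actually uses when it invokes the singleton hyperplanes $H^=_{\{i\}}$ --- and with that corrected index set your flow argument does go through. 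As written, though, the line ``so this is one of the hypothesized $H^{\geq}_{A^c}$'' silently enlarges the hypothesis set, and that is precisely where the proof breaks.
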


\begin{proof}
	By Corollary~\ref{cor: minkowski sum}, $\overline Q_n = \sum_S y_S\Delta_S$ where $y_S = 1$ if $S \in \calI_n$ and $y_S = 0$ otherwise.
	So, by \cite[Proposition 6.3]{Postnikov2009}, 
	\[
		\overline Q_n = \left\{(x_1,\dots,x_{n+1}) \in \RR^{n+1} \mid \sum_{i=1}^{n+1} x_i = z_{[n+1]},\, \sum_{i \in S} x_i \geq z_S \text{ for subsets } S \subseteq [n+1]\right\}
	\]
	where
	\[
		z_S = \sum_{J \subseteq S} y_J.
	\]
	By construction, $z_S = 0$ for any $S$ not containing $n+1$, and $z_S = |S| - 1 + \binom{|S|-1}{2} = \binom{|S|}{2}$ for any $S$ containing $n+1$.
	Thus, any halfspace of the form $H^{\geq}_S$ for $S \subseteq [n]$ a non-singleton is redundant, being implied by the intersection of the halfspaces $H^{\geq}_{\{i\}}$ for $i \in S$.
\end{proof}

Now let $\calF \subseteq 2^{[n]}$ be a collection of nonempty subsets.
We call $\calF$ a \emph{building set} on $[n]$ if the following conditions are satisfied:
\begin{enumerate}
	\item If $F_1,F_2 \in \calF$ and $F_1 \cap F_2 \neq \emptyset$, then $F_1 \cup F_2 \in \calF$; and
	\item $\calF$ contains $\{i\}$ for all $i \in [n]$.
\end{enumerate}

Any collection of subsets $\calF \subseteq 2^{[n]}$ can be extended to a building set.
This extension can be done in a way such that the resulting building set, considered as a simplicial complex on $[n]$, is minimal with respect to inclusion and is the unique building set with this property \cite[Lemma 3.10]{FeichtnerSturmfels}.
We call the building set constructed in this manner the \emph{building closure} of $\calF$ and denoted it by $\widehat{\calF}$.

To construct $\widehat\calF$, first consider a fixed nonempty subset $S$ of $[n]$.
Let $\calF_{\leq S}$ denote the subsets of $S$ that are also elements of $\calF$. 
Then $\widehat\calF$ is the set of all subsets $S$ of $[n]$ for which $|S| = 1$ or $\calF_{\leq S}$ is a connected simplicial complex. 

\begin{lemma}\label{lem: closure}
	The building closure of $\calI_n$ is 
	\[
		\widehat{\calI}_n = \binom{[n]}{1} \cup \left(\bigcup_{J \subseteq [n]} J \cup \{n+1\}\right).
	\]
\end{lemma}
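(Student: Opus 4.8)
The plan is to apply directly the explicit description of the building closure recalled just above the statement. I would set it up on the ground set $[n+1]$ rather than $[n]$, since every member of $\calI_n$ is a subset of $[n+1]$ that contains the element $n+1$; consequently $\widehat{\calI}_n$ is naturally described according to whether or not a subset $S\subseteq[n+1]$ contains $n+1$. Concretely, for each $S\subseteq[n+1]$ I will test the condition ``$|S|=1$, or $(\calI_n)_{\le S}$ generates a connected simplicial complex,'' where $(\calI_n)_{\le S}=\{F\in\calI_n : F\subseteq S\}$, and then collect the sets that pass.

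First I would treat the case $n+1\notin S$. Since every element of $\calI_n$ contains $n+1$, here $(\calI_n)_{\le S}=\emptyset$, so the complex it generates has no faces of positive dimension and is connected precisely when $|S|\le 1$; hence the subsets of $[n+1]$ not containing $n+1$ that lie in $\widehat{\calI}_n$ are exactly the singletons $\{i\}$ with $i\in[n]$, i.e. the members of $\binom{[n]}{1}$. Next I would treat $n+1\in S$. If $|S|=1$ then $S=\{n+1\}$, which qualifies by the ``$|S|=1$'' clause and equals $\emptyset\cup\{n+1\}$. If $|S|\ge 2$, write $S=J\cup\{n+1\}$ with $\emptyset\ne J\subseteq[n]$; for each $i\in J$ the pair $\{i,n+1\}$ lies in $\calI_n$ and is contained in $S$, so in the $1$-skeleton of $(\calI_n)_{\le S}$ every vertex of $J$ is adjacent to $n+1$. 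That $1$-skeleton is therefore a spanning star centered at $n+1$, in particular connected, so $S\in\widehat{\calI}_n$. Since every set of the form $J\cup\{n+1\}$ with $J\subseteq[n]$ arises this way, the subsets containing $n+1$ that lie in $\widehat{\calI}_n$ are exactly $\bigcup_{J\subseteq[n]}\bigl(J\cup\{n+1\}\bigr)$. Combining the two cases yields the claimed equality.

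The argument is essentially an unwinding of the characterization, so I do not anticipate a serious obstacle; the single point requiring care is the treatment of the empty restricted complex, which is exactly what forces us to keep the explicit ``$|S|=1$'' clause so that the singletons $\{i\}$, $i\in[n]$, are included while every larger subset of $[n]$ is excluded. As an independent sanity check I would also verify directly that $\binom{[n]}{1}\cup\bigcup_{J\subseteq[n]}\bigl(J\cup\{n+1\}\bigr)$ is a building set on $[n+1]$ containing $\calI_n$, and that repeatedly applying the union operation to the pairs $\{i,n+1\}$ already produces every set $J\cup\{n+1\}$, so this is the smallest such building set; this recovers the same answer without invoking the connectedness criterion.
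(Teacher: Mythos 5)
Your proof is correct and takes essentially the same route as the paper: apply the stated characterization of $\widehat\calF$ via connectedness of the restricted complex, split on whether $n+1\in S$, observe that $(\calI_n)_{\le S}$ is empty when $n+1\notin S$ so only singletons survive, and note that when $n+1\in S$ the edges $\{i,n+1\}$ make $(\calI_n)_{\le S}$ a connected star. Your handling of the corner case $S=\{n+1\}$ via the explicit ``$|S|=1$'' clause is slightly more careful than the paper's, but the argument is otherwise the same.
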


\begin{proof}
	Let $\calJ$ denote the claimed building closure of $\calI_n$.
	It is clear that $\{i\} \in \calJ$ for each $i \in [n+1]$.
	So, suppose $J \subseteq [n+1]$.
	If $n+1 \notin J$, then $(\calI_n)_{\leq J}$ is empty, since every element of $\calI_n$ contains $n+1$.
	In particular, in this case, $(\calI_n)_{\leq J}$ is not a simplicial complex, so $J \notin \calJ$.
	
	Now suppose $n+1 \in J$.
	If $|J| = \{n+1\}$ then $(\calI_n)_{\leq J} = \{\emptyset, \{n\}\}$ which is clearly connected.
	Otherwise, $(\calI_n)_{\leq J}$ consists of pairs $\{j,n+1\}$ and its subsets and/or of triples $\{i,j,n+1\}$ and its subsets.
	Since all of the maximal elements of these contain $n+1$, $(\calI_n)_{\leq J}$ is connected, $J \in \calJ$.
	Therefore, $\calJ = \widehat{\calI}_n$.
\end{proof}

When $\calF$ itself is a building set, then there is a large number of results known about the combinatorial structure of $\sum_{F \in \calF} \Delta_F$.
However, since $\calI_n$ is not a building set, we have to prove the desired conclusions directly.

Let $\calF$ be a building set.
A subset $N$ of $\calF$ is a \emph{nested set} if the following conditions hold:
\begin{enumerate}
	\item For $I,J \in N$, either $I \subseteq J$, $J \subseteq I$, or $I \cap J = \emptyset$;
	\item For any disjoint collection $J_1,\dots,J_k \in N$, $k \geq 2$, we have $J_1 \cup \cdots \cup J_k \notin \calF$; and
	\item $N$ contains all inclusion-maximal elements of $\calF$.
\end{enumerate}
The collection of all nested sets in $\calF$ is called the \emph{nested set complex} $\calN(\calF)$.

\begin{lemma}\label{lem: maximal nested sets}
	The inclusion-maximal elements of $\calN(\widehat{\calI}_n)$ are of the form $\{I_1,\dots,I_{n+1}\}$ such that $I_1, \dots, I_k$ are $k$ (possibly zero) disjoint singletons of $[n]$, $|I_{k+1} \setminus (I_1 \cup \cdots \cup I_k)| = \{n+1\}$, and $|I_{j+1} \setminus I_j| = 1$ for all $j > k$.
\end{lemma}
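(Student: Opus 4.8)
The plan is to work directly from the definition of nested sets applied to the building set $\widehat{\calI}_n$, whose explicit description we already have from Lemma~\ref{lem: closure}: its members are the singletons $\{i\}$ for $i \in [n]$ together with all sets $J \cup \{n+1\}$ for $J \subseteq [n]$. The inclusion-maximal elements of $\widehat{\calI}_n$ consist of $[n+1]$ itself together with the singletons $\{i\}$ that are not forced into any larger set --- but in fact $[n+1] \in \widehat{\calI}_n$ already, so the only inclusion-maximal element is $[n+1]$, hence condition (3) of the nested set definition forces $[n+1] \in N$ for every maximal nested set $N$. I would begin by recording this.

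Next I would analyze the structure forced by conditions (1) and (2). By condition (1) the elements of a nested set $N$ form a laminar family. Partition $N$ into those members containing $n+1$ and those not. The members not containing $n+1$ are singletons $\{i\}$ (since every non-singleton in $\widehat{\calI}_n$ contains $n+1$), and by laminarity any $\{i\} \in N$ with $i \ne n+1$ must either be disjoint from, or contained in, each member containing $n+1$. The members containing $n+1$ form a chain under inclusion (they pairwise intersect in $n+1$, so by condition (1) one contains the other), say $\{n+1\} \subseteq C_1 \subsetneq C_2 \subsetneq \cdots \subsetneq C_r = [n+1]$ after relabeling --- note $\{n+1\}$ itself need not be in $N$. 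Condition (2) is the key constraint: if $\{i_1\},\dots,\{i_s\} \in N$ are disjoint singletons with $s \ge 2$, their union must not lie in $\widehat{\calI}_n$; since $\{i_1,\dots,i_s\} \subseteq [n]$ is never in $\widehat{\calI}_n$ when $s \ge 2$, this condition is automatically satisfied for singleton unions. The real force of (2) is: we cannot have $J_1 \cup \cdots \cup J_k \in \widehat{\calI}_n$ for disjoint $J_1,\dots,J_k \in N$ with $k \ge 2$ --- this rules out, e.g., a chain element $C_\ell$ equaling the disjoint union $C_{\ell-1} \cup \{i\}$ when BOTH $C_{\ell-1}$ and $\{i\}$ are already in $N$... wait, that union IS $C_\ell \in \widehat\calI_n$, so it is forbidden. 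Hence in the chain we cannot have $C_\ell = C_{\ell-1} \sqcup \{i\}$ with $\{i\} \in N$; this is what forces the "growth by one new element not already a named singleton" structure. I would carefully unwind this to show: the singletons $\{i\} \in N$ are exactly some set of $k$ elements $I_1,\dots,I_k \subseteq [n]$ disjoint from the smallest chain element, and the chain $C_1 \subsetneq \cdots$ grows by exactly one element at each step, starting from $C_1 = (\text{something}) \cup \{n+1\}$.

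Then I would do a cardinality count to pin down maximality: a nested set that is maximal must have exactly $n+1$ elements (one new element of $[n+1]$ "accounted for" at each step, matching the $n+1$ points), giving the stated form $\{I_1,\dots,I_{n+1}\}$ with $I_1,\dots,I_k$ disjoint singletons of $[n]$, $I_{k+1} \setminus (I_1\cup\cdots\cup I_k) = \{n+1\}$, and $|I_{j+1}\setminus I_j| = 1$ for $j > k$. The main obstacle I anticipate is the bookkeeping in condition (2): one must check both that every set of the claimed form is genuinely a nested set (conditions (1)--(3) all hold --- (2) requires checking that no disjoint union of two or more members lands back in $\widehat\calI_n$, which needs the observation that such a union either lies in $[n]$ with size $\ge 2$, hence is not in $\widehat\calI_n$, or contains $n+1$ but then at most one summand contains $n+1$ and the chain structure prevents the union from being a named set) and, conversely, that maximality forces precisely this shape and forces the cardinality to be $n+1$. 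I would handle the forward direction (claimed form $\Rightarrow$ nested) by direct verification, and the converse by the laminarity-plus-growth analysis above, closing with the cardinality argument.
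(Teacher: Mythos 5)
Your proposal is correct and follows essentially the same route as the paper's proof: both identify $[n+1]$ as forced by condition (3), split the remaining elements into singletons of $[n]$ and a chain of sets containing $n+1$ via laminarity, and then use condition (2) together with maximality to force the singletons into the smallest chain element, make the chain grow by exactly one element per step, and pin the count at $n+1$. Your observation that the chain structure is immediate because any two members containing $n+1$ intersect is a slightly cleaner phrasing than the paper's, but it is the same underlying argument, not a different one.
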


\begin{proof}
	Let $N \in \calN(\widehat\calI_n)$ be maximal.
	For our first case, suppose $I \in N$ is a singleton from $[n]$.
	If $J \in N$ and $I \cap J = \emptyset$, then by condition $2$ for nested sets, $I \cup J \notin \widehat\calI_n$.
	But $J \in \widehat\calI_n$, so that must mean $J$ is a singleton of $[n] \setminus I$. 
	
	Let $I_1,\dots, I_k$ be the distinct singletons in $N$, which we are assuming must be subsets of $[n]$.
	Since $k \leq n$, there must be some $J \in N$ of size at least $2$. 
	If $J$ is disjoint from any of $I_1,\dots,I_k$, then the previous argument implies that $J$ is also a singleton, which is a contradiction.
	So, $I_1,\dots,I_k \subseteq J$.
	Since we need $J \in \widehat\calI_n$, this forces $n+1 \in J$ as well.
	
	Observe that by condition $2$ of nested sets and the pigeonhole principle, any element of $\calN(\widehat\calI_n)$ can have at most $n+1$ elements.	
	Thus if $N$ does not satisfy condition $1$ of the stated lemma, there are two distinct elements $S,T \in N$ for which $|S|, |T| > 1$ and $S \cap T \neq \emptyset$.
	If $|S| = |T|$, this contradicts condition $1$ of $N$ being a nested set, since neither $S$ nor $T$ can be a subset of the other and their intersection is nontrivial.
	Therefore, no such $S$ and $T$ can exist.
	Thus, either $S \subseteq T$ or $T \subseteq S$.

	The above implies that if $N = \{I_1,\dots,I_{n+1}\}$, then the $I_j$ can be relabeled so that $|I_1| = \cdots = |I_k| = 1$, $I_{k+1} = I_1 \cup \cdots I_k \cup \{n+1\}$, and $|I_{j+1} \setminus I_j| = 1$ for all $j > k+1$. 	
	By construction and Lemma~\ref{lem: closure}, $I_j \in \widehat\calI_n$ for each $j=1,\dots,n+1$, and conditions $1$ and $2$ of nested sets are satisfied.
	For the final condition, note that $[n+1]$ is the unique inclusion-maximal element of $\widehat\calI_n$, and is equal to $I_{n+1}$.
	So, each set described in condition $1$ of the statement of the lemma is a maximal element of $\calN(\widehat\calI_n)$.
	
	Finally, suppose $I_1 = \{n+1\}$.
	If there is another singleton $I \in N$, then $I_1 \cup I \in \widehat\calI_n$, so the only singleton of $N$ is $I_1$. 
	The conclusion of the lemma follows, continuing the argument from the third paragraph of this proof.
\end{proof}

\begin{thm}\label{thm: Qn simple}
	For all $n$, $\overline Q_n$ is simple and $n$-dimensional.
\end{thm}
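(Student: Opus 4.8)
The plan is to establish the two claims — dimension and simplicity — by combining the implicit halfspace description from Lemma~\ref{lem: Qn hyperplanes} with the combinatorial analysis of the nested set complex $\calN(\widehat\calI_n)$ carried out in Lemmas~\ref{lem: closure} and~\ref{lem: maximal nested sets}. First I would pin down the dimension. We know from Corollary~\ref{cor: minkowski sum} that $\overline Q_n$ is a Minkowski sum of the simplices $\Delta_S$ for $S \in \calI_n$, so it lies in $\RR^{n}$ (after the projection identifying it with $\sum_{S\in\calI_n}\Delta_S$, with coordinates indexed by $[n]$ — note the ambient space in Lemma~\ref{lem: Qn hyperplanes} is $\RR^n$, not $\RR^{n+1}$; I would be careful to keep the indexing consistent, since Corollary~\ref{cor: minkowski sum} writes $\overline Q_n$ in $\RR^{n+1}$-coordinates $e_{3n+1}$ etc., which I will rename as the $(n+1)$-st coordinate, or absorb the hyperplane $H^=_{[n]}$ appropriately). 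Since the singletons $\{1\},\dots,\{n\}$ and all the sets $\{i,n+1\}$ belong to $\calI_n$, the summands $\Delta_{\{i,n+1\}}$ together span a translate of the hyperplane $\sum_{i} x_i = \text{const}$, so $\overline Q_n$ is full-dimensional within $H^=_{[n]}$, giving $\dim \overline Q_n = n$. (Alternatively: $\widehat\calI_n$ contains $[n+1]$ and all singletons, and the dimension of a Minkowski sum of simplices equals the rank of the union of their vertex sets minus one.)

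For simplicity, the cleanest route is to show that the normal fan of $\overline Q_n$ is the nested set fan $\calN(\widehat\calI_n)$ — more precisely, that each vertex of $\overline Q_n$ corresponds to a maximal nested set of $\widehat\calI_n$, and that every maximal nested set has exactly $n$ elements (equivalently, exactly $n+1$ elements in the description of Lemma~\ref{lem: maximal nested sets}, one of which is the forced maximal element $[n+1]$, contributing nothing to the count of facets through the vertex once we quotient by the lineality direction $\RR(1,\dots,1)$). Concretely: by Lemma~\ref{lem: Qn hyperplanes} the facet-defining hyperplanes of $\overline Q_n$ are among $\{H^=_{\{i\}}\text{-type constraints from } H^\geq_S : S \in \calI_n\}$, and a vertex is cut out by a subset of these whose defining linear functionals span the $n$-dimensional space $H^=_{[n]}$. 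The standard theory of Minkowski sums of simplices associated to building sets (Feichtner–Sturmfels, Postnikov) tells us the face structure is governed by nested sets; since $\calI_n$ and $\widehat\calI_n$ have the same building closure and hence — I would need to check, or cite — the same polytope up to the faces that matter, the vertices biject with maximal nested sets. By Lemma~\ref{lem: maximal nested sets} every such maximal nested set, after removing the mandatory top element $[n+1]$, has exactly $n$ members, so every vertex of $\overline Q_n$ lies on exactly $n$ facets. A polytope in which every vertex lies on exactly $\dim$-many facets is simple, completing the proof.

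I expect the main obstacle to be the step asserting that $\overline Q_n$'s combinatorial type is controlled by $\calN(\widehat\calI_n)$ even though $\calI_n$ itself is \emph{not} a building set — the paper flags precisely this ("since $\calI_n$ is not a building set, we have to prove the desired conclusions directly"). The resolution I would pursue is to argue at the level of the normal fan directly from Lemma~\ref{lem: Qn hyperplanes}: a point $w$ selects the vertex of $\overline Q_n$ by selecting, for each summand $\Delta_S$, the vertex $e_{i}$ with $i = \arg\min_{j\in S} w_j$; ties break into lower-dimensional cones. Two weights give the same vertex iff they induce the same "argmin choice function" on all of $\calI_n$, and one shows this equivalence relation has cones whose minimal (non-lineality) cones each have exactly $n$ facets — this is where Lemma~\ref{lem: maximal nested sets}'s structural description does the work, translating a generic tie-free $w$ into a chain-like nested family of $n$ proper subsets. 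Verifying that no vertex acquires an extra facet (i.e.\ that the $H^\geq_S$ for non-singleton $S\subseteq[n]$ really are all redundant, as Lemma~\ref{lem: Qn hyperplanes} asserts, and that no coincidental additional equality holds at a vertex) is the calculation I would most want to do carefully; once it is in hand, simplicity is immediate.
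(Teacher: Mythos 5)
Your dimension argument is fine and is essentially a restatement of the Feichtner--Sturmfels fact the paper cites. The simplicity argument, however, has a genuine gap, and I think the gap is exactly where you flag your own uncertainty.

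Your plan is to show the normal fan of $\overline Q_n$ is the nested set fan $\calN(\widehat{\calI}_n)$, then read off simplicity from Lemma~\ref{lem: maximal nested sets}. But since $\calI_n$ is \emph{not} a building set, what you know a priori is only that $N(\overline Q_n)$ is a \emph{coarsening} of $N(\stell_n)$: $\overline Q_n = \sum_{S\in\calI_n}\Delta_S$ is a Minkowski summand of $\stell_n = \sum_{S\in\widehat{\calI}_n}\Delta_S$, so each maximal cone of $N(\overline Q_n)$ is a \emph{union} of maximal cones of $N(\stell_n)$, i.e.\ a union of maximal nested sets, not necessarily a single one. Showing that no actual merging occurs --- so that the two fans coincide and vertices biject with maximal nested sets --- is precisely what the paper's subsequent corollary establishes, and that corollary \emph{uses} the simplicity of $\overline Q_n$ as input. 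So the route you propose is circular unless you supply an independent argument.

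The paper supplies exactly such an argument, and it is the step you explicitly defer (``the calculation I would most want to do carefully''): one must show that no choice of $n$ defining hyperplanes $H^=_I$, $I \in \calI_n$, together with $H^=_{[n+1]}$, yields a point of $\overline Q_n$ unless the $I$'s form a nested chain. The interesting case is two non-singleton, non-nested $I_1, I_2$; the paper handles this by observing that any point of $\overline Q_n$ also obeys $\sum_{i\in I_1\cup I_2} x_i \geq \binom{|I_1\cup I_2|}{2}$ and $\sum_{i\in I_1\cap I_2} x_i \geq \binom{|I_1\cap I_2|}{2}$, and that imposing $H^=_{I_1}\cap H^=_{I_2}$ forces $\bigl(|I_1|-|I_1\cap I_2|\bigr)\bigl(|I_2|-|I_1\cap I_2|\bigr)\leq 0$, a contradiction. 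This inequality argument is the heart of the theorem; without carrying it (or something equivalent) out, the proposal asserts rather than proves simplicity. The ``argmin choice function'' sketch you give in the final paragraph does not obviously close this gap either, since ties among hyperplanes in $\calI_n$ could in principle collapse several $\widehat{\calI}_n$-cones into one cone of $N(\overline Q_n)$, and ruling that out is again the same calculation.

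One smaller note: be careful with the ambient space. After the projection in Corollary~\ref{cor: minkowski sum}, $\overline Q_n$ naturally sits in $\RR^{n+1}$ inside the affine hyperplane $\sum_{i=1}^{n+1} x_i = \binom{n+1}{2}$; the paper's indexing in Lemma~\ref{lem: Qn hyperplanes} has a couple of $[n]$/$[n+1]$ slips, and your write-up inherits some of that confusion. It does not affect the substance, but it is worth straightening out before attempting the facet-counting argument.
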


\begin{proof}
	That $\overline Q_n$ is $n$-dimensional follows immediately from \cite[Remark 3.11]{FeichtnerSturmfels}.
	Further, by \cite[Corollary 3.13]{FeichtnerSturmfels}, the set of halfspaces given in Lemma~\ref{lem: Qn hyperplanes} form an irredundant halfspace description of $\overline Q_n$.
	However, since we also have the hyperplane $H^=_{[n+1]}$, we know that the halfspace defined by $H^{\geq}_{[n+1]}$ can be replaced by $H^=_{[n+1]}$.
	So, we can ignore $H^{\geq}_{[n+1]}$ when showing that each vertex of $\overline Q_n$ is the intersection of exactly $n$ hyperplanes.
	
	Let $N \in \widehat\calI_n$. 
	By Lemma~\ref{lem: maximal nested sets}, $N$ is of the form $\{I_1,\dots,I_{n+1}\}$ where $I_1 = \{i_1\},\dots,I_k = \{i_k\}$ are distinct singletons, and $I_j = \{i_1, \dots, i_j\}$ for $j > k$ and $i_{k+1} = n+1$.
	The normal vectors to $H^=_{I_1},\dots,H^=_{I_n}$ are linearly independent, so the intersection of the hyperplanes form an affine line in $\RR^{n+1}$.
	The hyperplane $H^=_{[n+1]}$ intersects this line at the point $(v_1,\dots,v_{n+1})$, where
	\[
		v_r = \begin{cases}
				n+1-r & \text{ if } \{r\} = I_{j+1} \setminus I_j \text{ for some } j > k+1 \\
				\binom{k}{2} & \text{ if } r = k+1\\
				0 & \text{ if } r \leq k
			\end{cases}
	\] 
	Note in particular that $v$ lies in $H^>_I$ for all other $I \notin N$: if $I = \{i\}$ is a singleton not in $N$, then by construction, $v_i > 0$;
	and if $n \in I$ and $|I| > 1$, then there is some $j \in I$ for which $\{j\} \notin N$ and $v_j > 0$.
	So, by construction, $\sum_I v_i > \binom{|I|}{2}$.
	
	It remains to show that no other intersection of $n$ hyperplanes $H^=_I$, together with $H^=_{[n+1]}$, form a vertex of $\overline Q_n$.
	First consider $H^=_{\{i\}}$ and $H^=_{\{n+1\}}$ for some $i < n+1$.
	The points lying on both hyperplanes satisfy $x_i = x_{n+1} = 0$, but no such point can satisfy $x_i + x_{n+1} \geq 1$.
	So, these two hyperplanes cannot be used together to define a vertex.
	
	Next, suppose $v$ is defined by hyperplanes $H^=_{I_1},\dots,H^=_{I_r}$ and $H^=_{[n+1]}$ for some $r \geq n$ such that $|I_\alpha|, |I_\beta| > 1$ and $\{I_\alpha,I_\beta\}$ is not in any nested set in $\widehat\calI_n$.  
	Without loss of generality we can assume $\alpha = 1$ and $\beta = 2$. 
	We then have $|I_1 \setminus I_2|, |I_2 \setminus I_1| > 0$.
	In particular, $|I_1|, |I_2| > |I_1 \cap I_2|$. 
	
	Consider the hyperplanes $H^=_{I_1}, H^=_{I_2}, H^=_{I_1 \cup I_2}$, and $H^=_{I_1 \cap I_2}$.
	Using the fact that $|I_1 \cup I_2| = |I_1| + |I_2| - |I_1 \cap I_2|$ twice, we get the sequence of (in)equalities
	\[\begin{aligned}
		\binom{|I_1| + |I_2| - |I_1 \cap I_2|}{2} &= \binom{|I_1 \cup I_2|}{2}\\
			&\leq \sum_{i \in I_1 \cup I_2} x_i \\
			&= \binom{|I_1|}{2} + \binom{|I_2|}{2} - \sum_{i \in I_1 \cap I_2} x_i \\
			&\leq \binom{|I_1|}{2} + \binom{|I_2|}{2} - \binom{|I_1 \cap I_2|}{2}.
	\end{aligned}\]
	After routine algebraic manipulation, we deduce that
	\[
		(|I_1| - |I_1 \cap I_2|)(|I_2| - |I_1 \cap I_2|) \leq 0.
	\]
	However, this is impossible, since $|I_1|, |I_2| > |I_1 \cap I_2|$.
	This completes the proof. 
\end{proof}

As part of the previous proof, we explicitly found the vertices of $\overline Q_n$.
We will use the following notation to make the description simpler to work with.
Let $\pi \in \symm_n$ be any permutation.
We will treat $\pi$ both as a word $\pi = \pi_1\dots\pi_n$ and as a vector $(\pi_1,\dots,\pi_n)$.
For $i = 0,\dots,n$, let 
\[
	\tau_i(\pi) = \left(\chi_{[n]\setminus[i]}(\pi_1),\dots,\chi_{[n]\setminus[i]}(\pi_n),\binom{i+1}{2}\right)
\]
where $\chi$ is the indicator function.
In words, $\tau_i(\pi)$ replaces each instance of $\pi_j \leq i$ with $0$, and appends their sum to the end of the vector. 

\begin{cor}
	The vertices of $\overline Q_n$ are
	\[
		\{\tau_i(\pi) \mid i = 0,\dots,n,\, \pi \in \symm_n\}.
	\]
	Consequently, there are exactly
	\[
		\sum_{i=0}^n \binom{n}{i}i! = \sum_{i=0}^n \frac{n!}{i!}
	\]
	distinct initial ideals of $I_{P(2_n)}$.
\end{cor}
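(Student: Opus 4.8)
The plan is to read the vertices of $\overline Q_n$ straight off the proof of Theorem~\ref{thm: Qn simple}, where each vertex was computed explicitly as the common intersection point of the $n$ facet hyperplanes indexed by a maximal nested set of $\widehat\calI_n$ together with $H^=_{[n+1]}$, and then re-encode that data in the $\tau_i(\pi)$ notation.

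First I would recall from Lemma~\ref{lem: maximal nested sets} that a maximal nested set of $\widehat\calI_n$ is precisely the data of a $k$-subset $S=\{i_1,\dots,i_k\}\subseteq[n]$ (the singletons) together with a linear order $j_1,\dots,j_{n-k}$ on $[n]\setminus S$ recording the order in which elements are adjoined to the chain $S\cup\{n+1\}\subsetneq\cdots\subsetneq[n+1]$. The associated vertex $v\in\RR^{n+1}$ found in the proof of Theorem~\ref{thm: Qn simple} has $v_s=0$ for $s\in S$, has last coordinate $\binom{k+1}{2}$, and has $v_{j_\ell}=k+\ell$ for $\ell=1,\dots,n-k$. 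I would then observe that this is exactly $\tau_k(\pi)$ for any $\pi\in\symm_n$ that maps $S$ bijectively onto $\{1,\dots,k\}$ and sends $j_\ell\mapsto k+\ell$; conversely, given any $\pi\in\symm_n$ and $i\in\{0,\dots,n\}$, the vector $\tau_i(\pi)$ is the vertex attached to the maximal nested set whose singletons are $\{\,\{j\}:\pi_j\le i\,\}$ and whose chain order is $\pi^{-1}(i+1),\dots,\pi^{-1}(n)$. This yields the first assertion, that the vertex set of $\overline Q_n$ is $\{\tau_i(\pi):0\le i\le n,\ \pi\in\symm_n\}$.

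For the count I would use two observations. The last coordinate of $\tau_i(\pi)$ is $\binom{i+1}{2}$, strictly increasing in $i$, so vertices coming from different values of $i$ are automatically distinct; hence the number of vertices is $\sum_{i=0}^n\#\{\tau_i(\pi):\pi\in\symm_n\}$. For a fixed $i$, the value $\tau_i(\pi)$ depends on $\pi$ only through the positions and the values of the entries $\pi_j>i$: the entries $1,\dots,i$ may be permuted arbitrarily among the zeroed-out positions without changing $\tau_i(\pi)$, and no other move preserves it, so $\pi\mapsto\tau_i(\pi)$ is exactly $i!$-to-one on $\symm_n$. Therefore $\#\{\tau_i(\pi)\}=n!/i!$, and summing gives $\sum_{i=0}^n n!/i!=\sum_{i=0}^n\binom{n}{i}i!$ vertices. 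Finally, $\overline Q_n$ is unimodularly equivalent to the state polytope $Q_n$ of $I_{P(2_n)}$ by Corollary~\ref{cor: minkowski sum}, and unimodular equivalence preserves the face lattice; by Theorem~\ref{thm: state polytope} the vertices of a state polytope correspond to the top-dimensional cones of the Gr\"obner fan, i.e.\ to the distinct initial ideals. Hence the number of distinct initial ideals of $I_{P(2_n)}$ equals the vertex count above.

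The main obstacle is purely bookkeeping: carefully matching the coordinates of the nested-set vertex from the proof of Theorem~\ref{thm: Qn simple} against the definition of $\tau_i(\pi)$ (in particular pinning down that the relevant index $i$ is the number of singletons, and that the last coordinate $\binom{i+1}{2}$ and the running values $k+\ell$ come out correctly under the identification), and verifying that the fiber-size claim is an exact $i!$-to-one statement rather than merely an upper bound. Once the explicit vertices are in hand there is no conceptual difficulty.
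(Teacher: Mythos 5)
Your proof is correct and follows the same route the paper intends: it reads the vertices straight off the explicit computation in the proof of Theorem~\ref{thm: Qn simple} (which the paper's own proof of this corollary treats as a ``restatement''), matches each maximal nested set $\{I_1,\dots,I_{n+1}\}$ of $\widehat\calI_n$ to a pair $(i,\pi)$, and then performs the ``routine counting argument'' that the paper omits. Your bookkeeping is right: the coordinate assigned to the $\ell$-th element adjoined along the chain is $\binom{k+\ell+1}{2}-\binom{k+\ell}{2}=k+\ell$, the $(n+1)$-st coordinate is $\binom{k+1}{2}$, and the fiber of $\pi\mapsto\tau_i(\pi)$ is exactly the $i!$ rearrangements of the small values among the zeroed positions, so the count $\sum_{i=0}^n n!/i!$ follows once you note the last coordinate $\binom{i+1}{2}$ separates the strata. (Incidentally, this makes clear that the paper's printed formula for $v_r$ in the proof of Theorem~\ref{thm: Qn simple}, and the $\chi$ in the displayed definition of $\tau_i(\pi)$, contain typos; your reading agrees with the verbal description ``replaces each instance of $\pi_j\le i$ with $0$.'') The final appeal to Theorem~\ref{thm: state polytope} and Corollary~\ref{cor: minkowski sum} to identify vertices with initial ideals is exactly right.
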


\begin{proof}
	The description of the vertices is a restatement of the vertices computed in the proof of Theorem~\ref{thm: Qn simple}.
	The enumeration of the vertices, and therefore the initial ideals of $P(2_n)$, is a routine counting argument and is therefore omitted.
\end{proof}

We end this section with a brief, but interesting, connection to graph-associahedra.
Let $\st_n$ denote the star graph on $[n+1]$ with edges $E(\st_n) = \{i,n+1\}$ for each $i=1,\dots,n$.

\begin{defn}
	Let $n$ be a positive integer.
	The $n^{th}$ \emph{stellohedron} is the polytope
	\[
		\stell_n = \sum_{S \in E(\st_n)} \Delta_S.
	\]
\end{defn}

Two polytopes are said to be \emph{combinatorially equivalent} if their poset of faces, ordered by inclusion, are isomorphic.
For example, all convex quadrilaterals in the plane are combinatorially equivalent.

\begin{cor}
	For all $n$, $\overline Q_n$ is combinatorially equivalent to the stellohedron $\stell_n$.
\end{cor}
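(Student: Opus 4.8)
The plan is to recognize $\overline Q_n$ as a nestohedron in combinatorial disguise: identify its face lattice with the nested set complex of the building closure $\widehat{\calI}_n$ of Lemma~\ref{lem: closure}, identify $\widehat{\calI}_n$ with the graphical building set of the star graph $\st_n$, and recall that the nestohedron of that building set is exactly the stellohedron. The statement for the state polytope of $P(2_n)$ then follows by chaining with Corollary~\ref{cor: minkowski sum}, which already says that state polytope is unimodularly — in particular combinatorially — equivalent to $\overline Q_n$; as a byproduct the number of distinct initial ideals of $I_{P(2_n)}$ equals the number of vertices of $\stell_n$.

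The first concrete step is the graph-theoretic observation underlying everything: for the star $\st_n$ on $[n+1]$ with center $n+1$, a vertex set $S\subseteq[n+1]$ induces a connected subgraph precisely when $|S|=1$ or $n+1\in S$, so the graphical building set is $\calB(\st_n) = \binom{[n+1]}{1}\cup\{\,J\cup\{n+1\}\mid \emptyset\neq J\subseteq[n]\,\}$. Comparing with Lemma~\ref{lem: closure}, this is exactly $\widehat{\calI}_n$. Recall next that the stellohedron is the graph associahedron of $\st_n$, equivalently the nestohedron $\sum_{S\in\calB(\st_n)}\Delta_S$, whose face poset is anti-isomorphic to the nested set complex $\calN(\calB(\st_n))$. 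Hence it suffices to prove that $\overline Q_n = \sum_{S\in\calI_n}\Delta_S$ is combinatorially equivalent to $\sum_{S\in\widehat{\calI}_n}\Delta_S$.

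For this I would use the passage from a family to its building closure at the level of polytopes, already the organizing principle behind the proof of Theorem~\ref{thm: Qn simple}: by \cite{FeichtnerSturmfels}, for any finite family $\calF$ of nonempty subsets the polytope $\sum_{F\in\calF}\Delta_F$ has normal fan $\calN(\widehat{\calF})$. Applied to $\calF=\calI_n$ this gives that $\overline Q_n$ and $\sum_{S\in\widehat{\calI}_n}\Delta_S=\stell_n$ have the same normal fan, so they are combinatorially equivalent. Alternatively, one can extract the same conclusion self-containedly from the proof of Theorem~\ref{thm: Qn simple}: there $\overline Q_n$ is shown to be a simple $n$-polytope whose vertices $\tau_i(\pi)$ correspond to the maximal nested sets $N$ of $\widehat{\calI}_n$ described in Lemma~\ref{lem: maximal nested sets}, and the explicit coordinates of $\tau_i(\pi)$ make the vertex--facet incidences read off as ``the vertex attached to $N$ lies on the facet indexed by $S$ if and only if $S\in N$''; since a simple polytope is determined by its vertex--facet incidences, its face lattice is $\calN(\widehat{\calI}_n)$ under reverse inclusion, which is the face lattice of the stellohedron.

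The main obstacle is this middle step. On the citation route, one must quote the ``normal fan of $\sum_{F\in\calF}\Delta_F$ equals $\calN(\widehat{\calF})$'' statement in precisely the form available in \cite{FeichtnerSturmfels}, whose results are phrased for building sets and their closures in the matroid/subspace-arrangement language, and check that the hypotheses genuinely apply to the abstract family $\calI_n$ (which itself is not a building set, since it omits the singletons). On the self-contained route, one must first pin down which inequalities in Lemma~\ref{lem: Qn hyperplanes} are actually facet-defining — the singletons $\{i\}$ and the sets $J\cup\{n+1\}$, but not the non-singleton subsets of $[n]$ — and then verify the incidence relation against the coordinate formula for $\tau_i(\pi)$; this is routine, but the bookkeeping is where the real work lies.
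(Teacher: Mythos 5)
Your ``citation route'' has a genuine gap: the blanket claim that for any finite family $\calF$ of nonempty subsets the Minkowski sum $\sum_{F\in\calF}\Delta_F$ has normal fan $\calN(\widehat\calF)$ is false, and no amount of hypothesis-checking in \cite{FeichtnerSturmfels} can rescue it as stated. For instance, take $\calF=\{\{1,2\},\{2,3\}\}$ in $[3]$. Its building closure $\widehat\calF$ is the graphical building set of the path on three vertices, whose nestohedron $\sum_{S\in\widehat\calF}\Delta_S$ is the two-dimensional associahedron, a pentagon; but $\Delta_{\{1,2\}}+\Delta_{\{2,3\}}$ is a quadrilateral with only four vertices (and only four facets, since $x_2\geq 0$ is not facet-defining there). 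So passing from $\calF$ to $\widehat\calF$ can genuinely subdivide the normal fan, and what \cite{FeichtnerSturmfels} gives you without further work is information about the \emph{rays} of the fan (the facet normals), not the whole fan. That is exactly the distinction the paper is careful about, and why it does not stop after invoking \cite[Corollary 3.13]{FeichtnerSturmfels}.

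Your self-contained route is sound in outline and is a genuinely different argument from the paper's, once the bookkeeping is actually done. You propose to read off the face lattice of $\overline Q_n$ from its vertex--facet incidence matrix, using simplicity (Theorem~\ref{thm: Qn simple}), the explicit vertex coordinates $\tau_i(\pi)$, the facet description from Lemma~\ref{lem: Qn hyperplanes}, and the correspondence with maximal nested sets in Lemma~\ref{lem: maximal nested sets}; then match that incidence structure against $\calN(\widehat\calI_n)$, which is the face lattice of $\stell_n$. This works, and buys you the nested-set description of the faces along the way, but requires verifying the ``$v_N$ lies on $H^=_S$ iff $S\in N$'' incidence formula. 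The paper sidesteps those incidences entirely: it shows $\overline Q_n$ and $\stell_n$ have the same set of facet normals (via Lemma~\ref{lem: Qn hyperplanes}, \cite{PostnikovReinerWilliams}, and \cite{FeichtnerSturmfels}), notes that $\overline Q_n$ is a Minkowski summand of $\stell_n$ so $N(\stell_n)$ refines $N(\overline Q_n)$ by \cite{BilleraFillimanSturmfels}, and then uses simplicity of $\overline Q_n$ to force the two simplicial fans with the same rays to coincide. Both routes hinge on Theorem~\ref{thm: Qn simple}; the paper's refinement argument is shorter precisely because it never opens the incidence matrix, whereas your route, if written out, would yield a more explicit combinatorial isomorphism of face lattices.
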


\begin{proof}
	By Lemma~\ref{lem: Qn hyperplanes}, \cite[Section 10.4]{PostnikovReinerWilliams}, and \cite[Corollary 3.13]{FeichtnerSturmfels}, the set of vectors normal to the facets of $\overline Q_n$ are the same as those normal to the facets of $\stell_n$. 
	So, their normal fans have the same rays. 
	Since $\overline Q_n$ is a Minkowski summand of $\stell_n$, the normal fan of $\stell_n$ refines the normal fan of $\overline Q_n$ \cite[Proposition 1.2 (3)]{BilleraFillimanSturmfels}.
	From Theorem~\ref{thm: Qn simple} we know that $\overline Q_n$ is simple, so $N(\overline Q_n)$ is simplicial.
	Since $N(\stell_n)$ is also simplicial, the only way for $N(\stell_n)$ to properly refine $N(\overline Q_n)$ is if there is a cone in the former which is not in the latter.	
	However, each ray in the normal fan of a simple polytope is a face of all cones in which it is contained.
	So, $N(\overline Q_n)$ cannot be properly refined, hence the two normal fans are the same.
	Therefore, $\overline Q_n$ and $\stell_n$ have the same combinatorial structure.
\end{proof}


\section{Experimental Data}\label{sec: experimental data}

In the previous section, we showed that the state polytopes for certain combinatorial neural codes are unimodularly equivalent to well-known polytopes.
The second class was a special case of $P(l_1,\dots,l_k)$.
It would clearly be of interest to determine which choices of $l_1,\dots,l_k$ result in state polytopes with interesting properties.

One natural choice to make is $P(l,0,\dots,0)$ where we allow any $l > 0$. 
A corresponding Euler diagram consists of curves $\lambda_1,\dots,\lambda_{l+1}$ such that $\lambda_i$ and $\lambda_j$ intersect if and only if $|i - j| = 1$.
It is not entirely practical to use the techniques from previous sections on this code, even though the matrix whose columns are the codewords of the code has the consecutive ones property, and is therefore totally unimodular.
Indeed, experimental data in CoCoA \cite{CoCoA} suggests that the Minkowski summands of the state polytope become significantly more complicated; when $n=4$, the universal Gr\"obner basis consists of nine quadratics, eleven cubics, and three quartics.

\begin{problem}
	Characterize the following sets for all choices of $\ell \in \ZZ^k_{\geq 0}$:
	\begin{enumerate}
		\item the universal Gr\"obner basis of $P(\ell)$;
		\item the state polytope of $P(\ell)$; and
		\item all initial ideals of $P(\ell)$.
	\end{enumerate}
\end{problem}

\begin{table}
\[\begin{array}{|c|c|c|}\hline
\text{quadratics} & \text{cubics} & \text{quartics} \\ \hline
t_9t_{11} -t_{10}t_{12} &   t_5t_7t_{10} -t_6t_9t_{11}&  t_1t_3t_6t_{10} -t_2t_5t_8t_{11}\\
  t_7t_{10} -t_8t_{11}&  t_3t_5t_8 -t_4t_7t_9&  t_1t_4t_7t_{10} -t_2t_6t_9t_{11}\\
  t_7t_9 -t_8t_{12}&  t_3t_5t_{10} -t_4t_9t_{11}&  t_1t_4t_8t_{11} -t_2t_6t_{10}t_{12} \\
  t_5t_8 -t_6t_9&  t_1t_3t_8 -t_2t_7t_9&\\
  t_5t_7 -t_6t_{12}&  t_1t_3t_6 -t_2t_5t_7&\\
  t_3t_6 -t_4t_7&  t_1t_3t_{10} -t_2t_9t_{11}&\\
  t_3t_5 -t_4t_{12}&  t_5t_8t_{11} -t_6t_{10}t_{12}&\\
  t_1t_4 -t_2t_5&  t_3t_6t_{10} -t_4t_8t_{11}&\\
  t_1t_3 -t_2t_{12}&  t_3t_6t_9 -t_4t_8t_{12}&\\
&  t_1t_4t_8 -t_2t_6t_9&\\
&  t_1t_4t_7 -t_2t_6t_{12}&\\
\hline
\end{array}\]
\caption{The binomials of the universal Gr\"obner basis for $P(5,0,\dots,0)$, arranged by degree.}
\end{table}

We conclude with a conjecture based on data in Macaulay2 \cite{M2}.
Given a polytope $P$, let its $k^{th}$ face number $f_k$ denote the number of faces of $P$ of dimension $k$.

\begin{conj}
	Let $\ell = (l,0,\dots,0) \in \NN^n$ for some $l$.
	The state polytope of $P(\ell)$ has face numbers
	\[
		f_k = \binom{n-1}{k}\binom{2(n-1)}{n-1}.
	\]
	Combinatorially, $f_k$ is the number of Delannoy paths from $(0,0)$ to $(n-1,n-1)$ with $k$ diagonal steps.
\end{conj}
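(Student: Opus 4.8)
We outline an approach modeled on the treatment of $P(2_n)$, and indicate where it currently stalls. Throughout, write $\ell=(l,0,\dots,0)$, and note that the zero entries contribute nothing, so that $P(\ell)$ is carried by a single chain of circles $\lambda_1-\lambda_2-\cdots$ sitting inside an outer curve. Its codeword matrix has the consecutive ones property and full row rank, hence is unimodular, so by Lemma~\ref{lem: unimod} its universal Gr\"obner basis coincides with its Graver basis. The first — and hardest — step is to produce a closed-form description of that Graver basis, in the spirit of Proposition~\ref{prop: UGB for P}. The codeword vectors are the indicators of the singleton zones $\{\lambda_i\}$, of the overlap zones $\{\lambda_i,\lambda_{i+1}\}$, and of the outer zone, each augmented by the outer coordinate; a primitive binomial records a minimal integer relation among these, which one can read as an \emph{alternating walk} along the chain that may pass through the outer curve. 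I would classify such walks and bound their lengths by the number of circles they meet, thereby explaining the quadratic, cubic, and quartic binomials found for $P(5,0,\dots,0)$ and, in general, obtaining a basis $V_\ell$ indexed by subintervals of the chain and pairs thereof.

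Granting $V_\ell$, the next step is routine: each Gr\"obner degree has a fiber that is a low-dimensional polytope (a simplex in the cases seen so far), and by \cite[Theorem~7.15]{sturmfels} the Minkowski sum $Q$ of all Gr\"obner fibers is a state polytope for $I_{P(\ell)}$, exactly as in the passage culminating in Corollary~\ref{cor: minkowski sum}. Each vertex of $Q$ satisfies coordinate equalities forced by the chain (an overlap zone decomposes into its two circles in only one way) and by the all-ones outer row; projecting these away yields a full-dimensional polytope $\overline Q=\sum_{S\in\calF}\Delta_S$, where $\calF$ consists of tubes built from the subintervals of the set of overlaps of the chain and from their enlargements by the index of the outer curve.

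The third step identifies $\overline Q$ combinatorially. Computing the building closure $\widehat\calF$ as in Lemma~\ref{lem: closure}, I expect $\widehat\calF$ to be the graphical building set of the \emph{fan graph} — the path on the $d$ overlap-vertices together with an apex vertex adjacent to all of them, where $d$ is the number of overlaps — so that $\overline Q$ is the graph-associahedron of this fan graph by the nested-set machinery of \cite{FeichtnerSturmfels}; one can corroborate this by checking, as in Lemma~\ref{lem: Qn hyperplanes} and Theorem~\ref{thm: Qn simple}, that the halfspaces $H^{\geq}_S$ give an irredundant description and that $\overline Q$ is simple of dimension $d$. The analogous recipe applied to $P(2_n)$ produces the star graph and recovers the stellohedron, so this is the natural generalization.

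Finally, the $f$-vector. Faces of the $d$-dimensional graph-associahedron of the fan graph are indexed by tubings, with $k$-dimensional faces corresponding to tubings of size $d-k$. I would set up a bijection between these tubings and the Delannoy paths from $(0,0)$ to $(d,d)$ with $k$ diagonal steps — a maximal tubing corresponding to a corner-free staircase path, and replacing an adjacent east--north (or north--east) pair of unit steps by a diagonal step corresponding to merging two tubes — giving $f_k=\tfrac{(2d-k)!}{k!\,(d-k)!^2}$, matching the Delannoy-path count in the conjecture (and suggesting the displayed product of binomials should be read as this multinomial). The crux is the first step: a clean description of the universal Gr\"obner basis of $P(l,0,\dots,0)$ is precisely part~(1) of the open problem above, and the data indicate that its top degree grows with $l$; should the alternating-walk classification resist, a fallback is to \emph{define} $\overline Q$ to be the fan-graph-associahedron and then verify it is a state polytope by showing its normal fan both refines and is refined by the Gr\"obner fan — but the latter still requires enough of the Gr\"obner basis, so there is no genuine shortcut around Step~1.
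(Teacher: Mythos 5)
This statement is a \emph{conjecture} in the paper, not a theorem: the paper offers no proof of it, only computational evidence from CoCoA and Macaulay2 together with the table of binomials for $P(5,0,\dots,0)$. So there is no proof for me to compare your proposal against. You are in effect being honest about the same thing: your outline explicitly stalls at Step~1 (a closed-form universal Gr\"obner basis for $P(l,0,\dots,0)$), which is precisely part~(1) of the paper's open Problem~5.1, and everything downstream of it — the Gr\"obner fibers, the projection to a Minkowski sum of simplices, the identification with a fan-graph-associahedron, and the bijection between tubings and Delannoy paths — is contingent on that missing ingredient. Modeling the attack on the $P(2_n)$ argument (consecutive-ones unimodularity, Graver $=$ universal Gr\"obner, Gr\"obner fibers via Theorem~7.15 of \cite{sturmfels}, then nestohedron machinery from \cite{FeichtnerSturmfels}) is the obvious route, and the fan-graph guess is a sensible candidate generalizing the star graph of $P(2_n)$, but none of it is a proof.

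One substantive observation you make deserves to be flagged as a genuine issue with the conjecture as stated. The two descriptions of $f_k$ in the conjecture disagree: the number of Delannoy paths from $(0,0)$ to $(m,m)$ with exactly $k$ diagonal steps is
\[
\frac{(2m-k)!}{k!\,(m-k)!\,(m-k)!} \;=\; \binom{m}{k}\binom{2m-k}{m},
\]
whereas the displayed formula is $\binom{m}{k}\binom{2m}{m}$ with $m=n-1$; already for $m=2$, $k=1$ these give $6$ and $12$ respectively. So the conjecture is internally inconsistent and one of the two formulas must be a typo (your reading, that the Delannoy count is the intended one, is the natural fix). Separately, the indexing is ambiguous: the paper states that a realization of $P(l,0,\dots,0)$ uses only the curves $\lambda_1,\dots,\lambda_{l+1}$, so trailing zeros contribute nothing and the state polytope cannot depend on $n$; the $m$ appearing in the $f$-vector should be a function of $l$ (most plausibly $m=l$, the dimension of the state polytope, by analogy with $\overline Q_n$ being $n$-dimensional), not of the tuple length $n$. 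Any eventual proof, along the lines you sketch or otherwise, would first need to pin down the correct statement.
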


\bibliographystyle{plain}
\bibliography{references}

\end{document}